\let \mtcl = \mathcal
\let \a = \alpha
\let \al = \aleph
\let \b = \beta
\let \g = \gamma
\let \d = \delta
\let \l = \lambda
\let \k = \kappa
\let \m = \mu
\let \n = \nu
\let \p = \pi
\let \r = \rho
\let \s = \sigma
\let \x = \xi
\let \z= \zeta
\let \o = \omega
\let \it = \item
\DeclareMathOperator{\SATP}{SATP}
\DeclareMathOperator{\SH}{SH}
\DeclareMathOperator{\dom}{dom}
\DeclareMathOperator{\cf}{cf}
\DeclareMathOperator{\Col}{Col}
\def\MSB{{\mathbb{S}}}
\def\MPB{{\mathbb{P}}}
\def\MQB{{\mathbb{Q}}}
\def\k{\kappa}
\def\l{\lambda}
\def\a{\alpha}
\def\g{\gamma}
\def\d{\delta}
\def\b{\beta}
\newtheorem{theorem}{Theorem}[section]
\newtheorem{lemma}[theorem]{Lemma}
\newtheorem{corollary}[theorem]{Corollary}
\newtheorem{definition}[theorem]{Definition}
\newtheorem{remark}[theorem]{Remark}
\newtheorem{notation}[theorem]{Notation}
\newtheorem{claim}[theorem]{Claim}
\newtheorem{subclaim}[theorem]{Subclaim}
\numberwithin{equation}{section}
\newtheorem{convention}[theorem]{Convention}
\def\MPB{{\mathbb{P}}}
\def\MQB{{\mathbb{Q}}}
\def\k{\kappa}
\def\l{\lambda}
\def\a{\alpha}
\def\g{\gamma}
\def\d{\delta}
\def\b{\beta}
\def\l{\lambda}
\def\rmark{\mbox{$\rm\bf\rule{0.06em}{1.45ex}\kern-0.05em R$}}
\def\pmark{\mbox{$\rm\bf\rule{0.06em}{1.45ex}\kern-0.05em P$}}
\def\nmark{\mbox{$\rm\bf\rule{0.06em}{1.45ex}\kern-0.05em N$}}
\def\vdash{\mbox{$\rm\| \kern-0.13em -$}}
\newcommand{\lusim}[1]{\smash{\underset{\raisebox{1.2pt}[0cm][0cm]{$\sim$}}
{{#1}}}}
\newcommand{\ZFC}{\hbox{ZFC}}
\newcommand{\GCH}{\hbox{GCH}}
\newcommand{\CH}{\hbox{CH}}
\newcommand{\Ord}{\hbox{Ord}}
\newcommand{\ot}{\hbox{ot}}
\newcommand{\PFA}{\hbox{PFA}}
\DeclareMathOperator{\ssup}{sup}
\DeclareMathOperator{\cl}{cl}
\def\l{\lambda}
\def\rmark{\mbox{$\rm\bf\rule{0.06em}{1.45ex}\kern-0.05em R$}}
\def\pmark{\mbox{$\rm\bf\rule{0.06em}{1.45ex}\kern-0.05em P$}}
\def\nmark{\mbox{$\rm\bf\rule{0.06em}{1.45ex}\kern-0.05em N$}}
\def\vdash{\mbox{$\rm\| \kern-0.13em -$}}
\title[The special Aronszajn tree property at $\aleph_2$ and $\GCH$]{The special Aronszajn tree property at $\aleph_2$ and $\GCH$}
\author[D.\ Asper\'o]{David Asper\'{o}}
\address{David Asper\'o, School of Mathematics, University of East Anglia, Norwich NR4 7TJ, UK}
\email{d.aspero@uea.ac.uk}
\author[M.\  Golshani]{Mohammad Golshani}
\address{Mohammad Golshani, School of Mathematics, Institute for Research in Fundamental Sciences (IPM), P.O.\ Box:
19395--5746, Tehran, Iran.}
\email{golshani.m@gmail.com}
\thanks{}
\date{}
\begin{document}

\subjclass[2010]{03E05, 03E35, 03E50, 03E55}

\keywords{Special $\aleph_2$-Aronszajn trees, $\GCH$, weakly compact cardinals, iterated forcing with side conditions.}

\begin{abstract}
Starting from the existence of a weakly compact cardinal, we build a generic extension of the universe in which $\GCH$
holds and all $\aleph_2$-Aronszajn trees are special and hence there are no $\aleph_2$-Souslin trees. This result answers a well-known open question from the 1970's.
\end{abstract}

 \maketitle

\section{introduction}
Let $\kappa$ be an uncountable regular cardinal. Let us recall that a $\kappa$-tree is a tree $T$ of height $\kappa$ all of whose levels are smaller than $\kappa$, and that a $\kappa$-tree is called a $\kappa$-Aronszajn
tree if it has no $\kappa$-branches. Also, $T$ is called a $\kappa$-Souslin tree if
it has no $\kappa$-branches and no antichains of size $\kappa$.
When $\kappa=\lambda^+$ is a successor cardinal, a $\kappa$-Aronszajn tree is said to be \emph{special} if and only if it is a union of $\l$ antichains.\footnote{$A\subseteq T$ is an antichain of $T$ iff $A$ consists of pairwise incomparable nodes.}
Let us make the following definition:
\begin{definition}
\begin{enumerate}
\item Souslin's Hypothesis at $\kappa$, $\SH_\kappa$, is the statement ``there are no $\kappa$-Souslin trees''.
\item The special Aronszajn tree property at $\kappa=\lambda^+$, $\SATP_\kappa$, is the statement ``there exist $\kappa$-Aronszajn trees and all such trees are
special'' (see \cite{golshani-hayut}).
\end{enumerate}
\end{definition}

Aronszajn trees were introduced by Aronszajn (see \cite{kurepa}), who proved the existence, in $\ZFC$, of a special $\aleph_1$-Aronszajn tree. Later, Specker (\cite{specker}) showed that $2^{{<}\lambda}=\lambda$
 implies the existence of special $\lambda^+$-Aronszajn trees for $\lambda$ regular,
 and Jensen (\cite{jensen}) produced special $\lambda^+$-Aronszajn trees for singular $\lambda$ in $L$.

In \cite{solovay-tennenbaum}, Solovay and Tennenbaum proved the consistency of Martin's Axiom + $2^{\aleph_0} > \aleph_1$ and showed that this implies
$\SH_{\aleph_1}$. This was later extended by
Baumgartner, Malitz and Reinhardt \cite{b-m-r}, who showed that Martin's Axiom + $2^{\aleph_0} > \aleph_1$ implies $\SATP_{\aleph_1}$.
 Later,
Jensen (see \cite{devlin} and \cite{shelah}) produced a model of $\GCH$ in which $\SATP_{\aleph_1}$ holds.

The situation at $\aleph_2$ turned out to be more complicated. In \cite{jensen}, Jensen proved that the existence of an $\al_2$-Souslin tree follows from each of the hypotheses $\CH+\diamondsuit(S^2_1)$ and $\Box_{\o_1}+\diamondsuit(S^2_0)$ (where, given $m<n<\o$, $S^n_m=\{\a<\al_n\,\mid\, \cf(\a) =\al_m\}$). The second result was improved by Gregory in \cite{gregory}, where he proved that $\GCH$ together with the existence of a non-reflecting stationary subset of $S^2_0$ yields the existence of an $\al_2$-Souslin tree. In \cite{laver-shelah}, Laver and Shelah produced, relative to the existence of a weakly compact cardinal, a model of $\ZFC+\CH$ in which the special Aronszajn tree property at $\aleph_2$ holds. But in their model $2^{\aleph_1} > \aleph_2$,
and the task of finding a model of $\ZFC$+$\GCH$+$\SATP_{\aleph_2}$, or even of  $\ZFC$+$\GCH$+$\SH_{\aleph_2}$, remained a major open problem. The earliest published mention of this problem seems to appear in \cite{kanamori-magidor} (see also \cite{laver-shelah}, \cite{todorcevic}, \cite{shelah-stanley}, \cite{shelah-anniv-issue}, or \cite{rinot0}).

In this paper we solve the above problem by proving the following theorem.
\begin{theorem}
\label{main theorem}
Suppose $\kappa$ is a weakly compact cardinal. Then there exists a set-generic extension of the universe in which
$\GCH$ holds, $\kappa=\aleph_2$, and the special Aronszajn tree property  at $\aleph_2$ (and hence Souslin's Hypothesis at $\aleph_2$) holds.
\end{theorem}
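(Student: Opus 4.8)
The plan is to force with a countable-support-style iteration of length $\kappa$ that carries side conditions, so that at each stage we either collapse a cardinal below $\kappa$ down to $\aleph_1$ (thereby eventually making $\kappa=\aleph_2$) or we shoot a specializing function through an $\aleph_2$-Aronszajn tree that has appeared in an intermediate model, while the side conditions — finite $\in$-chains of elementary submodels of two types, small ones of size $\aleph_0$ and larger internally approachable ones of size $\aleph_1$ — guarantee that the whole iteration is $\aleph_2$-preserving and $<\!\kappa$-strategically-closed-like enough to preserve $\GCH$. First I would set up the iteration $\PE = P_{\bar E}$ together with its restrictions $P_{\bar E\restriction\tau}$, prove the basic chain-condition and properness-type lemmas (a pure side-condition extension lemma, and the key fact that a condition can be ``amalgamated'' over a model on its working part), and verify that $\PE$ has the $\kappa$-chain condition using the weak compactness of $\kappa$ — this is where the $\Pi^1_1$-indescribability gets used, in a reflection/catch-your-tail argument on a $\kappa$-length bookkeeping. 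Then I would show $\PE$ forces $2^{\aleph_0}=\aleph_1$, $2^{\aleph_1}=\aleph_2=\kappa$, and $2^{\aleph_n}=\aleph_{n+1}$ for all $n$ (the last coming from the ground model GCH above $\kappa$ being preserved because $\PE$ has size $\kappa$ and is $\kappa$-cc).

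Next I would run the specialization bookkeeping: a standard $\Delta$-system / reflection argument on names ensures that every $\PE$-name for an $\aleph_2$-Aronszajn tree is, on a club of $\tau<\kappa$, reflected by a $P_{\bar E\restriction\tau}$-name, and that the corresponding specializing poset is absorbed at some later stage of the iteration; hence in $V^{\PE}$ every $\aleph_2$-Aronszajn tree is special. The existence of $\aleph_2$-Aronszajn trees in $V^{\PE}$ is automatic from $\CH$ holding there (by Specker's theorem, $2^{\aleph_0}=\aleph_1$ gives a special $\aleph_2$-Aronszajn tree). Finally, $\SH_{\aleph_2}$ follows trivially, since a special $\aleph_2$-Aronszajn tree, being the union of $\aleph_1$ antichains, has an antichain of size $\aleph_2$ and so is not Souslin, and there are no $\aleph_2$-Souslin trees at all once every $\aleph_2$-Aronszajn tree is special.

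The main obstacle — and the technical heart of the argument — is designing the side-condition model-sequences so that the forcing simultaneously (i) is $\aleph_2$-distributive enough on the ``$\aleph_1$-sized'' coordinates to preserve $\CH$ and not add branches to the trees being specialized, (ii) is proper with respect to the countable models so that $\aleph_1$ is preserved and the specializing functions really are total, and (iii) still satisfies the $\kappa$-cc so that $\kappa$ survives as $\aleph_2$ and cardinals and cofinalities above are untouched. Balancing (i) and (ii) is exactly what defeated earlier attempts: a naive finite-support specialization iteration blows up $2^{\aleph_1}$ (as in Laver–Shelah), while a naive countable-support iteration of the specializing posets is not even well-behaved. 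I expect the resolution to require a two-type side-condition apparatus in the style of Neeman's sequences of models, with a carefully tuned interaction between the collapses and the specializations — in particular a ``mixed support'' where the collapsing part is essentially countably closed and the specializing part is finite but controlled by the $\aleph_1$-sized models on the side. Verifying the amalgamation lemma for such conditions, over both types of models, will be the longest and most delicate part of the proof.
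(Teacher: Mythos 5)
There is a genuine gap, and it sits exactly at the point that makes this problem hard. First, your iteration has length $\kappa$, and your mechanism for catching all trees is to reflect a name for an $\aleph_2$-Aronszajn tree of the final model to club-many $\tau<\kappa$ and then ``absorb the specializing poset at some later stage''. This cannot work as stated: an $\aleph_2$-Aronszajn tree of the final model (for instance the special tree produced by Specker's construction from $\CH$ in the final model, which uses subsets of $\omega_1$ added cofinally by the collapsing coordinates) need not have a $P\restriction\tau$-name for any $\tau<\kappa$; only its initial segments reflect, and specializing an initial segment does nothing toward specializing the whole tree, while there is no ``later stage'' of a length-$\kappa$ iteration at which the full tree is available. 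This is precisely why both Laver--Shelah and the present paper iterate $\kappa^{+}$ times, with a bookkeeping function enumerating $H(\kappa^{+})$-names (using $2^{\kappa}=\kappa^{+}$ and the $\kappa$-c.c.\ so that every tree of the extension is caught).

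Second, once the length is corrected to $\kappa^{+}$, the central obstacle --- the reason the problem was open --- is not the balance of properness, distributivity and $\kappa$-c.c.\ that you identify, but the fact that a $\kappa^{+}$-length specialization iteration naively adds $\kappa^{+}$ subsets of $\omega_1$, giving $2^{\aleph_1}\geq\aleph_3$ exactly as in Laver--Shelah. Your side conditions (Neeman-style two-type $\in$-chains) are aimed at cardinal preservation and properness, which the paper gets for free since its working parts are countable and the whole iteration is $\sigma$-closed (no new reals, so $\CH$ is automatic); they contain no mechanism for bounding $2^{\aleph_1}$. The paper's side conditions are of a different kind altogether: countable graphs of pairwise isomorphic ``twin'' models with markers, with a copying requirement along the edges, and their sole purpose is the proof that $2^{\aleph_1}=\aleph_2$ is preserved --- one adjoins an edge between isomorphic models containing two names for subsets of $\omega_1$ and shows the extended condition forces the names equal, so only $\aleph_2$-many new subsets of $\omega_1$ appear. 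Without an idea of this sort your proposal either blows up $2^{\aleph_1}$ (length $\kappa^{+}$) or fails to specialize all trees (length $\kappa$). A secondary concern: with finite specializing parts your forcing adds reals, so preserving $\CH$ becomes a substantial additional problem (in the style of Asper\'o--Mota) rather than a triviality, whereas the paper's countable working parts avoid it entirely.
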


\begin{remark}
\normalfont
\begin{enumerate}
\item Our argument can be easily extended to deal with the successor of any regular cardinal.

\item By results of Shelah and Stanley (\cite{shelah-stanley}) and of Rinot (\cite{rinot}), our large cardinal assumption is optimal. Specifically:

\begin{enumerate}
\item It is proved in \cite{shelah-stanley} that if $\o_2$ is not weakly compact in $L$, then either $\Box_{\o_1}$ holds or there is a non-special $\aleph_2$-Aronszajn tree; in particular, $\GCH+\SATP_{\aleph_2}$ implies that $\o_2$ is weakly compact in $L$ by one of Jensen's results mentioned above.

\item Rinot proved in \cite{rinot} that if $\GCH$ holds, $\l\geq\o_1$ is a cardinal, and $\Box(\l^+)$ holds, then there is a $\l$-closed $\l^+$-Souslin tree; on the other hand, Todor\v{c}evi\'{c} (\cite{Todor}) proved that if $\k\geq\o_2$ is a regular cardinal and $\Box(\k)$ fails, then $\k$ is weakly compact in $L$.
\end{enumerate}
\end{enumerate}
\end{remark}

The rest of the paper is devoted to the proof of Theorem \ref{main theorem}.
We will next give an (inevitably) vague and incomplete description of the forcing witnessing the conclusion of this theorem.

The construction of the forcing witnessing Theorem \ref{main theorem} combines a natural iteration for specializing $\al_2$-Aronszajn trees, due to Laver
and Shelah (\cite{laver-shelah}), with ideas from \cite{aspero-mota2}.
More specifically, we build a certain countable support forcing iteration $\langle \MQB_\beta\,\mid\,\beta\leq\kappa^{+}\rangle$ with side conditions. The first step of the construction is essentially the L\'{e}vy collapse of the weakly compact cardinal $\kappa$ to become $\omega_2$. At subsequent stages, we consider forcings for specializing $\aleph_2$-Aronszajn trees by countable approximations.
Conditions in a given $\MQB_\beta$, for $\b>0$, will consist of a working part $f_q$, together with a certain side condition. The working part $f_q$ will be a countable function with domain contained in $\b$ such that for all $\a\in\dom(f_q)$,
\begin{itemize}
\item $f_q(\a)$ is a condition in the  L\'{e}vy collapse if $\a=0$, and
\item if $\a>0$, $f_q(\a)$ is a countable subset of $\k\times\o_1$.
\end{itemize} \noindent Letting $\a=\a_0+\n$, where $\a_0$ is a multiple of $\o_1$ and $\n<\o_1$, any two distinct members of $f_q(\a)$, when $\a>0$, will be forced to be incomparable nodes in a certain $\k$-Aronszajn tree $\lusim{T}_{\a_0}$ on $\k\times\o_1$ chosen via a given bookkeeping function $\Phi:\mtcl X\longrightarrow H(\k^{+})$, where $\mtcl X$ denotes the set of multiples of $\o_1$ below $\k^+$.

The side condition will be a countable directed graph $\tau_q$ whose vertices are ordered pairs of the form $(N, \g)$, where $N$ is an elementary submodel of $H(\kappa^{+})$  such that $\arrowvert N\arrowvert=\arrowvert N\cap\kappa\arrowvert$ and $^{{<}|N|}N\subseteq N$, and where $\g$ is an ordinal in the closure of $N\cap (\b+1)$ in the order topo\-lo\-gy. Given any such $(N, \g)$, $\g$ is to be seen as a \emph{marker for $N$} in $q$, telling us up to which stage is $N$ `active' as a model. We will tend to call such pairs $(N, \g)$ \emph{models with markers}. Whenever $\langle (N_0, \g_0), (N_1, \g_1)\rangle$ is an edge in $\tau_q$, for a condition $q$, $(N_0, \in)$ and $(N_1, \in)$ are $\in$-isomorphic via a (unique) isomorphism $\Psi_{N_0, N_1}$ such that $\Psi_{N_0, N_1}(\x)\leq\x$ for every ordinal $\x\in N_0$ and such that $\Psi_{N_0, N_1}$ is in fact an isomorphism between the structures $(N_0, \in, \Phi_{\a})$ and $(N_1, \in, \Phi_{\Psi_{N_0, N_1}(\a)})$ whenever $\a\in N_0\cap\g_0$ is such that $\Psi_{N_0, N_1}(\a)<\g_1$, for a certain sequence $(\Phi_\a)_{\a<\k^+}$ of increasingly expressive predicates contained in $H(\k^+)$.

In the above situation, $N_0$ and $N_1$ are to be seen as `twin models', relative to $q$, with respect to all stages $\a$ and $\Psi_{N_0, N_1}(\a)$ such that $\a\in  N_0\cap\g_0$ and $\Psi_{N_0, N_1}(\a)< \g_1$.
This means that the natural restriction of $f_q(\a)$ to $N_0$, i.e., $f_q(\a)\cap N_0$, is to be copied over, via $\Psi_{N_0, N_1}$, into the restriction of $f_q(\Psi_{N_0, N_1}(\a))$ to $N_1$, i.e., we require that $$\Psi_{N_0, N_1}(f_q(\a)\cap N_0)=f_q(\a)\cap N_0 \subseteq f_q(\Psi_{N_0, N_1}(\a)),$$ and similarly for the restriction of $\tau_q\restriction\alpha+1$ to $N_0$ (with the restriction $\tau_q\restriction\alpha+1$ being defined naturally).

We can describe our copying procedure by saying that we are \emph{copying into the past information coming from the future via the edges in $\tau_q$}. Given an edge $\langle (N_0, \g_0), (N_1, \g_1)\rangle$ as above and some $\a\in N_0\cap\g_0$ such that $\bar\a=\Psi_{N_0, N_1}(\a)<\g_1$, the intersection of $f_{q}(\bar\a)$ with $\d_{N_0}\times\o_1$ may certainly contain more information than the intersection of $f_q(\a)$ with $\d_{N_0}\times\o_1$. Thanks to the way we are setting up the copying procedure---namely, only copying from the future into the past via edges as we have described---it is straightforward to see that our construction is in fact a forcing iteration, in the sense that $\MQB_\a$ is a complete suborder of $\MQB_\b$ for all $\a<\b$. This need not be true in general, in forcing constructions of this sort, if we allow also to copy `from the past into the future'.\footnote{It turns out that, in our specific construction, and thanks to clause (7) in the definition of condition, we could in fact have required to copy information in both directions, i.e. that, in the above situation, full symmetry obtains, below $\d_{N_0}\times\o_1$, between stages $\a$ and $\Psi_{N_0, N_1}(\a)$.
 However, the current presentation, only deriving full symmetry for a dense set of conditions, seems to be cleaner.}

For technical reasons, given an edge $\langle (N_0, \g_0), (N_1, \g_1)\rangle$ in $\tau_q$ and a stage $\a\in N_0\cap\g_0$, we would like to require $\MQB_{\a+1}\cap N_0$ to be a complete suborder of $\MQB_{\a+1}$; indeed, having this would be useful in the proof that our construction has the $\k$-chain condition.\footnote{We elaborate on this point at the end of Section \ref{definition of forcing notion}.} This cannot be accomplished, while defining $\MQB_{\a+1}$, on pain of circularity. However, a certain approximation to the above situation can be meaningfully stipulated, which we do,\footnote{This is clause (7) in the definition of condition.} and this suffices for our purposes.

Our construction is  $\sigma$-closed for all $\b\leq\k^{+}$.
In particular, forcing with $\MQB_{\k^{+}}$ preserves $\omega_1$ and $\CH$.
The preservation of all higher cardinals proceeds by showing that the construction has the $\kappa$-chain condition. For this, we use the weak compactness of $\kappa$ in an essential way. The proof of the $\k$-c.c.\ of $\MQB_\b$, for each $\b<\k^+$, is modelled after the corresponding proof in \cite{laver-shelah}; in fact it is a natural adaptation, to the current setting, of the proof in \cite{laver-shelah} of the $\k$-c.c.\ of the main forcing in that paper. The fact that the length of our iteration is not greater than $\kappa^+$ seems to be needed in this proof.
Finally, the copying, for a given condition $q$, of all information coming from $q$ via the edges  occurring in $\tau_q$ is crucially used in the proof that our forcing preserves $2^{\aleph_1}=\aleph_2$ (s.\ the proof of Lemma \ref{preserving ch}).

Side conditions are often employed in forcing constructions with the purpose of guaranteeing that certain cardinals are preserved. In the present construction, on the other hand, they are used to ensure that the relevant level of $\GCH$\footnote{$2^{\aleph_1}=\aleph_2$.} is preserved. This use of side conditions is taken from \cite{aspero-mota2}, where they are crucially used in the proof of $\CH$-preservation. It is worth observing that, while in the construction from \cite{aspero-mota2} a certain amount of structure is needed among the models occurring in the side condition,\footnote{Using the terminology of \cite{aspero-mota1}, they need to come from a symmetric system.} no structure whatsoever (for the underlying set of models) is needed in the present construction.
We should point out that even if it preserves $2^{\aleph_1}=\aleph_2$, our construction does add new subsets of $\omega_1$ after collapsing $\k$ to become $\o_2$, although only $\aleph_2$-many of them (cf.\ the construction in \cite{aspero-mota2}, where  $\CH$ is preserved but $\al_1$-many new reals are added).

The paper is organized as follows. In Section \ref{Models with markers and edges} we define the notions of model with marker and edge, which we will be using throughout the paper, and prove some of their basic properties.
 In Section \ref{definition of forcing notion} we define our forcing construction and prove some if its basic properties.
 In Section \ref{preservation properties} we show that the forcing has the $\kappa$-chain condition. This is the most elaborate proof in the paper.\footnote{Cf.\ the proof in \cite{laver-shelah}, where the hardest part is to prove that the forcing is $\k$-c.c., or
 the proof in \cite{aspero-mota2}, where the hardest part is to prove that the forcing is proper.}  Finally, in Section \ref{Completing the proof of Main theorem}  we complete the proof of Theorem \ref{main theorem}. The main argument in this section is to show that our forcing preserves $2^{\al_1}=\al_2$.

\section{Models with markers and edges}\label{Models with markers and edges}

In this section we set up the side condition part of our main forcing construction and discuss some of its properties. As we will see, our side condition forcing (i.e., the collection of our side conditions, with the natural extension relation) is a trivial forcing notion in the sense that any two conditions are compatible.

Let us fix, for the remainder of this paper, a weakly compact cardinal $\kappa$, and let us assume, without loss of generality, that $2^{\mu}=\mu^{+}$ for every cardinal $\mu\geq\kappa$.\footnote{In fact, if $\kappa$ is weakly compact, then $\GCH$ at every cardinal $\mu\geq\kappa$ can be easily arranged by collapsing cardinals with conditions of size $\leq \kappa$, which will preserve the weak compactness of $\kappa$.}

 Given functions $f_0,\ldots, f_n$, for $n<\o$, we let $$f_n \circ \ldots\circ f_0$$ denote the function $f$ with domain the set of $x\in \dom(f_0)$ such that for every $i<n$, $(f_i\circ\ldots\circ f_0)(x)\in\dom(f_{i+1})$, and such that for every $x\in \dom(f)$, $f(x)=f_n((f_{n-1}\circ\ldots\circ f_0)(x))$. For a function $f$ and a set $x$ we let $f(x)$ denote the empty set whenever $x\notin \dom(f)$.

Throughout the paper, if $N$ is a set such that $N\cap\kappa$ is an ordinal, we denote this ordinal by $\delta_N$ and call it \emph{the height of $N$}. If $X$ is a set, we set $$\cl(X)=X\cup\{\a\in\Ord\,\mid\,\a=\sup(X\cap\a)\}$$ If, in addition, $\g$ is an ordinal, we let $\g_X$ be the highest ordinal $\x\in \cl(X)$ such that $\x\leq\g$.

 Let $\mtcl X=\{\omega_1\cdot \alpha\,\mid\,\a <\k^+\}$.\footnote{Where, here and elsewhere, the dot in $\o_1\cdot\a$ denotes ordinal multiplication.}
  Given an ordinal $\a <\k^+$, there is a unique representation $\a=\a_0+\nu$, where $\a_0\in \mtcl X$ and $\nu<\o_1$. We will  denote the above ordinal $\a_0$ by $\m(\a)$.

Let $$\Phi:\mtcl X \longrightarrow H(\kappa^{+})$$ be such that for each $x \in  H(\kappa^+)$, $\Phi^{-1}(x)$ is a stationary subset of
$\mtcl X$.

This function $\Phi$ exists by $2^\kappa=\k^+$. Also, let $F:\k^{+}\longrightarrow H(\k^{+})$ be a bijection which is definable over the structure $\langle H(\k^{+}), \in, \Phi\rangle$. (We may for example let $\langle W_\alpha \mid \alpha<\kappa^+\rangle$ be the $<_\Phi$-increasing enumeration of $\{\Phi^{-1}(x) \mid,x\in H(\kappa^+)\}$, where $<_\Phi$ is defined by setting $\Phi^{-1}(x)<_\Phi\Phi^{-1}(y)$ iff $\min(\Phi^{-1}(x))<\min(\Phi^{-1}(y))$, and then we may define $F$ so that $F^{-1}(x)=\alpha$ if and only $\Phi^{-1}(x)=W_\alpha$.)
 Let also $\Phi_0$ be the satisfaction predicate for the structure $\langle H(\k^{+}), \in, \Phi\rangle$.

\begin{definition}[Models with markers]
\label{defmarker}
An ordered pair $(N, \g)$ is called a \emph{model with marker} if and only if:
\begin{enumerate}
\item $(N, \in, \Phi_0)\prec (H(\k^{+}), \in, \Phi_0)$. \footnote{Given $N\subseteq H(\k^+)$ and predicates $P_0,\ldots, P_n \subseteq H(\k^{+})$, we will tend to write $(N, \in, P_0,\ldots, P_n)$ as short-hand for $(N, \in, P_0\cap N,\ldots, P_n\cap N)$.}
\item $N\cap\kappa\in\kappa$, $\arrowvert N\arrowvert=\arrowvert N\cap\kappa\arrowvert$, and  $^{{<}\arrowvert N\arrowvert}N\subseteq N$.
\item $\g\in \cl(N)\cap\k^{+}$.

\end{enumerate}
\end{definition}
We will often use, without mention, the fact that $(N, \g)\in N'$ whenever $(N, \g)$ and $(N', \g')$ are models with markers and $N\in N'$.\footnote{Note that $N \in N'$ implies $\g \in N'$ as well. This is because, $\cl(N) \cap \k^+ \in N'$ and $\cl(N) \cap \k^+$ has size less than $\k$,
so $\cl(N) \cap \k^+\subseteq N'$.}

\begin{notation}
Given models $N_0$ and $N_1$ such that $(N_0, \in)\cong (N_1, \in)$, we will denote the unique $\in$-isomorphism $\Psi:(N_0, \in)\to (N_1, \in)$ by $\Psi_{N_0, N_1}$.
\end{notation}

Given any nonzero ordinal $\eta<\kappa^{+}$,  let $e_\eta$ be the $F$-least surjection from $\kappa$ onto $\eta$. Let $\vec e=\langle e_\eta\mid 0<\eta<\k^{+}\rangle$. We will say that a model $N\subseteq H(\kappa^{+})$ is \emph{closed under $\vec e$}  if $e_\eta(\x)\in N$  for every nonzero $\eta\in N\cap\k^{+}$  and every $\x\in \kappa\cap N$.

\begin{lemma}\label{agreement} Suppose $N_0$ and $N_1$ are models closed under $\vec e$ of the same height. Then $N_0\cap N_1\cap\kappa^{+}$ is an initial segment of both $N_0\cap\kappa^{+}$ and $N_1\cap\kappa^{+}$. In particular, if $(N^i_0, \g^i_0)$ and $(N^i_1, \g^i_1)$ (for $i\leq n$) are models with markers such that $(N^i_0, \in, \Phi_0)\cong (N^i_1, \in, \Phi_0)$ for all $i\leq n$,
then $$(\Psi_{N^n_0, N^n_1}\circ\ldots\circ \Psi_{N^0_0, N^0_1})(x)=x$$ for every $x\in \dom(\Psi_{N^n_0, N^n_1}\circ\ldots\circ \Psi_{N^0_0, N^0_1})\cap N^n_1$.
\end{lemma}
\begin{proof}
Let us first prove the first assertion. Given any nonzero $\eta\in N_0\cap N_1\cap\kappa^{+}$ and any $\a\in N_0\cap\eta$ there is some $\xi\in N_0\cap\kappa$ such that $e_\eta(\xi)=\alpha$. But since $\eta$ and $\xi$ are both members of $N_1$, we also have that $\alpha=e_\eta(\xi)\in N_1$.

As to the second assertion, let us first consider the case in which $\d_{N^i_0}=\d_{N^{i'}_0}$ for all $i$, $i'$.
By the choice of $\vec{e}$, each of the models $N^i_\epsilon$, for $i \leq n$ and $\epsilon \in \{0, 1\}$, is closed under $\vec{e}$.

Let $x\in \dom(\Psi_{N^n_0, N^n_1}\circ\ldots\circ \Psi_{N^0_0, N^0_1}) \cap N^n_1$ and $\a=F^{-1}(x)$. We first prove by induction on $i\leq n$ that $$\ot(N^0_0\cap \a)=\ot(N^i_1\cap (\Psi_{N^i_0, N^i_1}\circ\ldots\circ\Psi_{N^0_0, N^0_1})(\a))$$ and $$(\Psi_{N^i_0, N^i_1}\circ\ldots\circ\Psi_{N^0_0, N^0_1})(x)=F((\Psi_{N^i_0, N^i_1}\circ\ldots\circ\Psi_{N^0_0, N^0_1})(\a)).$$

For $i=0$ this is true since $\Psi_{N^0_0, N^0_1}$ is an isomorphism between the structures $(N^0_0, \in, \Phi)$ and $(N^0_1, \in, \Phi)$. For $i>0$, assuming the above equalities hold for $i-1$, we have that $(\Psi_{N^{i-1}_0, N^{i-1}_1}\circ\ldots\circ\Psi_{N^0_0, N^0_1})(x)\in N^i_0$ and therefore $$(\Psi_{N^{i-1}_0, N^{i-1}_1}\circ\ldots\circ\Psi_{N^0_0, N^0_1})(\a)\in N^i_0$$ since  $$(\Psi_{N^{i-1}_0, N^{i-1}_1}\circ\ldots\circ\Psi_{N^0_0, N^0_1})(x)=F((\Psi_{N^{i-1}_0, N^{i-1}_1}\circ\ldots\circ\Psi_{N^0_0, N^0_1})(\a))$$ and $F:\kappa^+\longrightarrow H(\kappa^+)$ is a bijection. Then we have that $$\ot(N^i_0\cap (\Psi_{N^{i-1}_0, N^{i-1}_1}\circ\ldots\circ \Psi_{N^0_0, N^0_1})(\a))$$ and $$\ot(N^i_1\cap (\Psi_{N^0_i, N^i_1}\circ\Psi_{N^{i-1}_0, N^{i-1}_1}\circ\ldots\circ \Psi_{N^0_0, N^0_1})(\a))$$ are equal (since $\Psi_{N^i_0, N^i_1}$ is an $\in$-isomorphims between $N^i_0$ and $N^i_1$) and $$\ot( N^{i-1}_1\cap (\Psi_{N^{i-1}_0, N^{i-1}_1}\circ\ldots\circ\Psi_{N^0_0, N^0_1})(\a))$$ and $$\ot(N^i_0\cap (\Psi_{N^{i-1}_0, N^{i-1}_1}\circ\ldots\circ\Psi_{N^0_0, N^0_1})(\a))$$ are equal (by the first assertion as $(\Psi_{N^{i-1}_0, N^{i-1}_1}\circ\ldots\circ\Psi_{N^0_0, N^0_1})(\a)\in N^{i+1}_1\cap N^i_0$). Hence, $$\ot(N^0_0\cap \a)= \ot(N^i_1\cap (\Psi_{N^i_0, N^i_1}\circ\ldots\circ\Psi_{N^0_0, N^0_1})(\a))$$ For the  second conclusion, we note that $$(\Psi_{N^i_0, N^i_1}\circ\ldots\circ\Psi_{N^0_0, N^0_1})(x)=(\Psi_{N^i_0, N^i_1}\circ F\circ \Psi_{N^{i-1}_0, N^{i-1}_1}\circ\ldots\circ \Psi_{N^0_0, N^0_1})(\a),$$ which is equal to $F(\Psi_{N^i_0, N^i_1}\circ\ldots\Psi_{N^0_0, N^0_1})(\a))$ since $\Psi_{N^i_0 N^i_1}$ is an isomorphism between $(N^i_0, \in, \Phi)$ and $(N^i_1, \in, \Phi)$.

Finally, we note that $\ot(N^n_1\cap\a)=\ot(N^0_0\cap\a)$ by the first assertion as $\a=F^{-1}(x)\in N^0_0\cap N^n_1$. Hence, $$\ot(N^n_1\cap (\Psi_{N^n_0, N^n_1}\circ\ldots\circ \Psi_{N^0_0, N^0_1})(\a))=\ot(N^n_1\cap\a)$$ and therefore $(\Psi_{N^n_0, N^n_1}\circ\ldots\circ \Psi_{N^0_0, N^0_1})(\a)=\a$. Then also $$(\Psi_{N^n_0, N^n_1}\circ\ldots\circ \Psi_{N^0_0, N^0_1})(x)=F(\Psi_{N^n_0,  N^n_1}\circ\ldots\circ \Psi_{N^0_0, N^0_1})(\a)=F(\a)=x$$

Suppose now that $\d_{N^i_0}\neq\d_{N^{i'}_0}$ for some $i\neq i'$ and let $i^\ast$ be such that $\d_{N^{i^\ast}_0}=\min\{\d_{N^i_0}\mid i\leq n\}$. Given any $i$ and $\epsilon$, we know that $(N^i_\epsilon, \in, \Phi_0)\prec (H(\k^+), \in, \Phi_0)$ and that $N^i_\epsilon$ is closed under sequences of length less than $|N^i_\epsilon|$. Also, $|N^{i^{\ast\ast}}_{\epsilon}|=|\d_{N^{i^\ast}_0}|$ for every $\epsilon\in\{0, 1\}$ and every $i^{\ast\ast}$ such that $\d_{N^{i^{\ast\ast}}_0}=\d_{N^{i^\ast}_0}$. But now it easily follows from the above facts that there is a sequence $(\langle \bar N^i_{0}, \bar N^i_{1}\rangle)_{i\leq n}$ of pairs of models with the following properties.

\begin{itemize}
\item For all $i\leq n$, $(\bar N^i_0, \in, \Phi)$ and $(\bar N^i_1, \in, \Phi)$ are isomorphic elementary submodels of $(H(\k^+), \in, \Phi)$ and $\d_{\bar N^i_0}=\d_{N^{i^*}_0}$.
\item For every $i\leq n$ and every $\epsilon\in \{0, 1\}$, $\bar N^i_\epsilon= N^i_\epsilon$ or $\bar N^i_\epsilon\in N^i_\epsilon$.
\item $x\in \dom(\Psi_{\bar N^n_{0}, \bar N^n_{1}}\circ\ldots\circ \Psi_{\bar N^0_{0}, \bar N^1_{1}}) \cap \bar N^n_1$

\item $(\Psi_{N^n_0, N^n_1}\circ\ldots\circ \Psi_{N^0_0, N^0_1})(x)=(\Psi_{\bar N^n_{0}, \bar N^n_{1}}\circ\ldots\circ \Psi_{\bar N^0_{0}, \bar N^1_{1}})(x)$
\end{itemize}
Indeed, we can define $\bar N^{i^*+j}_0$, $\bar N^{i^*+j}_1$, $\bar N^{i^*-k}_0$ and $\bar N^{i^*-k}_1$, for $j\leq n-i^*$ and $k\leq i^*$, by recursion as follows.
\begin{itemize}
\item If $\d_{N^{i^*+j}_0}=\d_{N^{i^*}_0}$, then $\bar N^{i^*+j}_0=N^{i^*+j}_0$ and $\bar N^{i^*+j}_1=N^{i^*+j}_1$, and if $\d_{N^{i^*-k}_0}=\d_{N^{i^*}_0}$, then $\bar N^{i^*-k}_0=N^{i^*-k}_0$ and $\bar N^{i^*-k}_1=N^{i^*-k}_1$.
\item If $\d_{N^{i^*}_0}<\d_{N^{i^*+j}_0}$ (in which case $j>0$), then $\bar N^{i^*+j}_0\in N^{i^*+j}_0$ is such that
\begin{itemize}
\item $(N^{i^*+j}_0, \in, \Phi)\prec (H(\kappa^+), \in, \Phi)$,
\item $(\bar N^{i^*+j-1}_1, \in, \Phi)\cong (\bar N^{i^*+j}_0, \in, \Phi)$, and
\item $\bar  N^{i^*+j-1}_1\cap N^{i^*+j}_0\subseteq \bar N^{i^*+j}_0$,
\end{itemize}
\noindent and $\bar N^{i^*+j}_1=\Psi_{N^{i^*+j}_0, N^{i^*+j}_1}(\bar N^{i^*+j}_0)$.
\item If $\d_{N^{i^*}_0}<\d_{N^{i^*-k}_0}$ (in which case $k>0$), then $\bar N^{i^*-k}_1\in N^{i^*-k}_1$ is such that
\begin{itemize}
\item $(N^{i^*-k}_1, \in, \Phi)\prec (H(\kappa^+), \in, \Phi)$,
\item $(\bar N^{i^*-k+1}_0, \in, \Phi)\cong (\bar N^{i^*-k}_1, \in, \Phi)$, and
\item $\bar N^{i^*-k+1}_0\cap N^{i^*-k}_1\subseteq \bar N^{i^*-k}_1$,
\end{itemize}
\noindent and $\bar N^{i^*-k}_0=\Psi_{N^{i^*-k}_1, N^{i^*-k}_0}(\bar N^{i^*-k}_1)$.
\end{itemize}

But now we are done by the previous case.
\end{proof}

It will be convenient to use the following pieces of terminology: given models with markers $(N_0, \g_0)$, $(N_1, \g_1)$,
we will say that \emph{$(N_0, \g_0)$ and $(N_1, \g_1)$ are twin models (with markers)} if and only if $(N_0, \in, \Phi_0)\cong (N_1, \in, \Phi_0)$.
If $\Psi_{N_0, N_1}(\a)\leq\a$ for every ordinal $\a\in N_0$, then we say that \emph{$N_1$ is a projection of $N_0$}.

\begin{definition}[Edge]
Suppose $\vec{\Phi}=(\Phi_\alpha)_\alpha$ is a sequence  of predicates of $H(\k^+)$ of length less than $\kappa^+$.
An ordered pair $$\langle (N_0, \g_0), (N_1, \g_1)
\rangle$$ of models with markers is called a $\vec{\Phi}$-\emph{edge} if and only if  the following are satisfied:
\begin{enumerate}
\item $(N_0, \g_0)$ and $(N_1, \g_1)$ are twin models with markers;
\item for every $\epsilon\in\{0, 1\}$ and every $\a\in N_\epsilon\cap\g_\epsilon$, $(N_\epsilon, \in, \Phi_\a)\prec (H(\k^+), \in, \Phi_\a)$;
\item $N_1$ is a projection of $N_0$;
\item $\Psi_{N_0, N_1}$ is an isomorphism between the structures $(N_0, \in, \Phi_{\a})$ and $(N_1, \in, \Phi_{\bar\a})$ for every $\a\in N_0\cap\g_0$ such that $\bar\a:=\Psi_{N_0, N_1}(\a)<\g_1$.
\end{enumerate}
Moreover, if
$\g_0\leq\b$ and $\g_1\leq\b$, then we call $\langle (N_0, \g_0), (N_1, \g_1)\rangle$ a $\vec{\Phi}$-\emph{edge below $\b$}.
\end{definition}

\begin{definition}[Generalized edge]
An ordered pair $$e=\langle (N_0, \g_0), (N_1, \g_1)\rangle$$ of models with markers is called  a $\vec{\Phi}$-\emph{anti-edge} if $$e^{-1}\coloneqq \langle (N_1, \g_1), (N_0, \g_0)\rangle$$ is a $\vec{\Phi}$-edge. We say that an ordered pair $e$ is a \emph{generalized $\vec{\Phi}$-edge} if it is a $\vec{\Phi}$-edge or a $\vec{\Phi}$-anti-edge.
\end{definition}

\begin{convention}
If $\tau$ is a set of generalized $\vec{\Phi}$-edges, we say that a generalized $\vec{\Phi}$-edge $e$ \emph{comes from $\tau$} in case $e\in \tau$ or $e^{-1}\in \tau$. We also set $\tau^{-1}=\{e^{-1}: e \in \tau    \}$.
\end{convention}
Given a generalized $\vec{\Phi}$-edge $e=\langle (N_0, \g_0), (N_1, \g_1)\rangle$ and an ordinal $\a$, we let $e\restriction\a$ denote the generalized $(\vec{\Phi}\upharpoonright \a)$-edge $$\langle (N_0, \min\{\a, \g_0\}_{N_0}), (N_1, \min\{\a, \g_1\}_{N_1})\rangle.\footnote{Recall that $\min\{\a, \g_\epsilon\}_{N_\epsilon}$, for $\epsilon \in \{0, 1\},$ is the highest ordinal $\xi \in \cl(N_\epsilon)$ such that $\xi \leq \min\{\a, \g_\epsilon\}$.}$$ Given a collection $\tau$ of $\vec{\Phi}$-edges and given an ordinal $\a$, we denote by $\tau\restriction\a$ the set $\{e\restriction\a\,\mid\, e\in \tau\}$. Note that $\tau\restriction\alpha$ is a collection of $(\vec{\Phi}\upharpoonright \a)$-edges below $\a$.

Given a sequence $\vec\Phi=(\Phi_\alpha)_\alpha$ of predicates of $H(\k^+)$, we say that \emph{for each $\a$, $\Phi_\a$ codes $\langle\Phi_\beta\mid\beta<\alpha\rangle$ in a uniform way} in case there is a formula $\varphi(x, y)$ in the language for the structures $(H(\kappa^+), \in, \Phi_\a)$ such that for all $\beta<\alpha$ less than the length of $\vec\Phi$, and for each $a\in H(\kappa^+)$, $a\in \Phi_\b$ if and only if $(H(\kappa^+), \in, \Phi_\a)\models\varphi(\b, a)$.

Given models with markers $(N, \g)$, $(N_0, \g_0)$ and $(N_1,  \g_1)$, if $N\in N_0$ and $(N_0, \in)\cong  (N_1, \in)$, then we let $\pi^{\g, N}_{N_0, \g_0, N_1, \g_1}$ denote the supremum of the set of ordinals of the form $\Psi_{N_0, N_1}(\x)$, where
\begin{itemize}
\item $\x\in N \cap (\g+1)$,
\item $\x<\g_0$, and
\item $\Psi_{N_0, N_1}(\x)< \g_1$.
\end{itemize}
We let also $\pi^{\g, N}_{e}$ denote $\pi^{\g, N}_{N_0, \g_0, N_1, \g_1}$ whenever $e=\langle (N_0, \g_0), (N_1, \g_1)\rangle$ is a $\vec{\Phi}$-edge.
Given generalized $\vec{\Phi}$-edges $e=\langle (N_0, \g_0), (N_1, \g_1)\rangle$ and $e'=\langle (N_0', \g_0'), (N_1', \g_1')\rangle$ such that $e'\in N_0$, we denote $$\langle (\Psi_{N_0, N_1}(N_0'), \p^{\g_0', N_0'}_{N_0, \g_0, N_1, \g_1}),  (\Psi_{N_0, N_1}(N_1'), \p^{\g_1', N_1'}_{N_0, \g_0, N_1, \g_1})\rangle$$ by $\Psi_e(e')$. Note that if $\vec\Phi$ is such that for each $\alpha$, $\Phi_\alpha$ codes $\langle\Phi_\beta\mid\,\beta<\alpha\rangle$ in a uniform way, then $\Psi_e(e')$ is a generalized $\vec{\Phi}$-edge.

\begin{definition}[Closedness under copying]
Suppose $\vec\Phi$ is such that for each $\alpha$, $\Phi_\alpha$ codes $\langle\Phi_\beta\mid\,\beta<\alpha\rangle$ in a uniform way. A set $\tau$ of $\vec{\Phi}$-edges is \emph{closed under copying} in case for all $\vec{\Phi}$-edges $e=\langle (N_0, \g_0), (N_1, \g_1)\rangle$ and $e'=\langle (N'_0, \g'_0), (N'_1, \g'_1)\rangle$ in $\tau$ such that $e'\in  N_0$ there are ordinals $\g_0^*\geq \p^{\g_0', N_0'}_{e}$ and $\g_1^*\geq \p^{\g_1', N_1'}_{e}$ such that $$\langle (\Psi_{N_0, N_1}(N'_0), \g^*_0), (\Psi_{N_0, N_1}(N'_1), \g_1^*)\rangle \in\tau.$$
\end{definition}

Given a sequence $\vec{\mtcl E}=(\langle (N^i_0, \g^i_0), (N^i_1, \g^i_1)\rangle\,\mid\,i< n)$ of generalized $\vec{\Phi}$-edges, we will tend to denote the expression $$\Psi_{N^{n-1}_0, N^{n-1}_1}\circ\ldots\circ \Psi_{N^0_0, N^0_1}$$ by $\Psi_{\vec{\mtcl E}}$. If $\vec{\mtcl E}$ is the empty sequence, we let $\Psi_{\vec{\mtcl E}}$ be the identity function.

\begin{definition}[Pure side conditions forcing]
Suppose $\vec{\Phi}=(\Phi_\alpha)_\alpha$ is a sequence of predicates of $H(\k^+)$ such that for each $\alpha$, $\Phi_\alpha$ codes $\langle\Phi_\beta\mid\,\beta<\alpha\rangle$ in a uniform way. Let $\beta < \kappa^+$. Let
$\MPB^{\textsf{e}}_{\vec{\Phi}, \beta}$ be the set of all countable sets $\tau$ of $\vec{\Phi}$-edges below
$\beta$ which are closed under copying and $\vec e$.
Given conditions $\tau_0$ and $\tau_1$ in $\MPB^{\textsf{e}}_{\vec{\Phi}, \beta}$, let $\tau_1 \leq \tau_0$ if for every $\langle (N_0,  \g_0), (N_1, \g_1)\rangle \in \tau_{0}$ there are $\g_0'\geq\g_0$ and $\g_1' \geq \g_1$ such that $\langle (N_0,  \g'_0), (N_1, \g'_1)\rangle\in \tau_{1}$.
\end{definition}

The next lemma shows that
$\MPB^{\textsf{e}}_{\vec{\Phi}, \beta}$  is the trivial forcing notion.

\begin{lemma}
\label{pedgeistrivial}
Given sets $\tau^0$ and $\tau^1$ of $\vec{\Phi}$-edges, there exists a smallest set $\tau=\tau^0 \oplus\tau^1$ of $\vec{\Phi}$-edges which contains both $\tau_0$
and $\tau_1$ and is closed under copying. Furthermore, if both $\tau^0$ and $\tau^1$ are from $\MPB^{\textsf{e}}_{\vec{\Phi}, \beta}$, then so is
$\tau.$
\end{lemma}

\begin{proof}
Let $\tau^0\oplus \tau^1$ be the natural amalgamation of $\tau^0$ and $\tau^1$ obtained by taking copies of $\vec{\Phi}$-edges as dictated by suitable functions $\Psi_{\vec{\mtcl E}}$, so that $\tau^0\oplus \tau^1$ is closed under copying (where the $\vec{\Phi}$-edges generated by this copying procedure have minimal marker so that $\tau^0\oplus\tau^1$ is closed under copying).
To be more specific,
$\tau^0\oplus \tau^1 = \bigcup_{n<\o}\tau_n$, where
$(\tau_n)_n$ is the following sequence.

\begin{enumerate}
\item $\tau_0=\tau^0\cup\tau^1$
\item For each $n<\o$, $\tau_{n+1}=\tau_n\cup \tau_n'$, where $\tau_n'$ is the set of $\vec{\Phi}$-edges of the form $\Psi_e(e')$,
for $\vec{\Phi}$-edges $e=\langle (N_0, \g_0), (N_1, \g_1)\rangle$ and $e'$ in $\tau_n$ such that $e'\in N_0$.
\end{enumerate}
Then $\tau=\tau^0\oplus\tau^1$ is as required.
\end{proof}

Given $\tau^0$ and $\tau^1$, two sets of $\vec{\Phi}$-edges, the construction in the proof of Lemma \ref{pedgeistrivial} of $\tau^0\oplus \tau^1$ as $\bigcup_{n<\o}\tau_n$ gives rise to a natural notion of rank on the set of generalized $\vec{\Phi}$-edges coming from $\tau^0\oplus\tau^1$. Specifically, given $n<\o$, a generalized $\vec{\Phi}$-edge $e=\langle (N_0, \g_0), (N_1, \g_1)\rangle$ coming from $\tau^0\oplus\tau^1$ has \emph{$(\tau^0, \tau^1)$-rank} $n$ iff $n$ is least such that $e$ comes from $\tau_n$. Alternatively, we may define the \emph{$(\tau^0, \tau^1)$-rank of $e$} as follows.
\begin{itemize}
\item $e$ has $(\tau^0, \tau^1)$-rank $0$ if $e$ comes from $\tau^0\cup \tau^1$.
\item For every $n<\o$, a generalized $\vec{\Phi}$-edge $e$ coming from $\tau^0\oplus\tau^1$ has $(\tau^0, \tau^1)$-rank $n+1$ iff $e$ does not have $(\tau^0, \tau^1)$-rank $m$ for any $m\leq n$ and there are $\vec{\Phi}$-edges $e_0=\langle (N_0, \g_0), (N_1, \g_1)\rangle$ and $e_1$ coming from $\tau^0\oplus\tau^1$ and such that
\begin{itemize}
\item the maximum of the $(\tau^0, \tau^1)$-ranks of $e_0$ and $e_1$ is $n$,
\item $e_1\in N_0$, and \item $e=\Psi_{e_0}(e_1)$.
\end{itemize}
\end{itemize}

\begin{definition}[$\tau$-thread]
Given a set $\tau$ of $\vec{\Phi}$-edges, a sequence $\vec{\mtcl E}=(\langle (N^i_0, \g^i_0), (N^i_1, \g^i_1)\rangle\,\mid\,i< n)$ of generalized $\vec{\Phi}$-edges coming from $\tau$, and $x\in N^0_0$, we will call $\langle \vec{\mtcl E}, x\rangle$ a \emph{$\tau$-thread} in case $x\in \dom(\Psi_{\vec{\mtcl E}})$. In the above situation, if $x=(y, \a)$, where $y\in H(\k^+)$ and $\a<\k^+$, we call $\langle \vec{\mtcl E}, x\rangle$ a \emph{correct $\tau$-thread} if and only if
\begin{enumerate}
\item $\a< \g^0_0$,
\item $\Psi_{\vec{\mtcl E}}(\a)\in \g^{n-1}_1$, and
\item $\Psi_{\vec{\mtcl E}}$ is a (partially defined) elementary embedding from the structure $(N^0_0, \in, \Phi_\a)$ into the structure $(N^{n-1}_1, \in, \Phi_{\Psi_{\vec{\mtcl E}}(\a)})$.
\end{enumerate}
\end{definition}

We will sometimes just say \emph{thread} when $\tau$ is not relevant.
It will be useful to consider the following strengthening of the notion of correct thread:
\begin{definition}
Given a set $\tau$ of $\vec{\Phi}$-edges, a sequence $$\vec{\mtcl E}=(\langle (N^i_0, \g^i_0), (N^i_1, \g^i_1)\rangle \mid i<n)$$ of generalized $\vec{\Phi}$-edges coming from $\tau$, and $x=(y, \a)\in H(\k^+)\times\k^+$, $\langle\vec{\mtcl E}, x\rangle$ is a  \emph{connected $\tau$-thread} in case
\begin{enumerate}
\item $\langle\vec{\mtcl E}, x\rangle$ is a $\tau$-thread,
\item $\a<\g^0_0$, and
\item for each $i<n$,
\begin{enumerate}
\item $(\Psi_{N^0_i, N^1_i}\circ\ldots\circ\Psi_{N^0_0, N^0_1})(\a)<\g^i_1$, and
\item $(\Psi_{N^0_i, N^1_i}\circ\ldots\circ\Psi_{N^0_0, N^0_1})(\a)<\g^{i+1}_0$ if $i+1<n$.\end{enumerate}\end{enumerate}
\end{definition}

\begin{remark} \normalfont Given a set $\tau$ of $\vec{\Phi}$-edges, every connected $\tau$-thread is correct.\end{remark}

The following lemma can be easily proved by induction on the $(\tau^0, \tau^1)$-rank of the members of $\tau^0\oplus\tau^1$.

\begin{lemma}\label{high-amalg} Suppose for each $\alpha$, $\Phi_\alpha$ codes $\langle\Phi_\beta\mid\,\beta<\alpha\rangle$ in a uniform way.  Let $\tau^0$ and $\tau^1$ be sets of $\vec{\Phi}$-edges, and let $\l<\k$ be an ordinal such that all members of $\tau^0$ involve models of height less than $\l$. Suppose $\tau^1$ is closed under copying. Then all members of $\tau^0\oplus\tau^1$ involving models of height at least $\l$ are in $\tau^1$.
\end{lemma}

As we will see, the following lemma will enable us to ease our path through the proof of Claim \ref{separating-pair-compatibility}, in Section \ref{preservation properties}, in a significant way.

\begin{lemma}\label{orbits}
 Suppose for each $\alpha$, $\Phi_\alpha$ codes $\langle\Phi_\beta\mid\,\beta<\alpha\rangle$ in a uniform way.
For all sets $\tau^0$ and $\tau^1$ of $\vec{\Phi}$-edges, every set $x$, and every
 $\tau^0\oplus \tau^1$-thread $\langle\vec{\mtcl E}, x\rangle$ there is a
 $\tau^0\cup\tau^1$-thread $\langle\vec{\mtcl E}_\ast, x\rangle$ such that $$\Psi_{\vec{\mtcl E}}(x)=\Psi_{\vec{\mtcl E}_\ast}(x)$$ Furthermore, if $x=(y, \a)\in H(\k^+)\times\k^+$ and $\langle\vec{\mtcl E}, x\rangle$ is connected, then $\vec{\mtcl E}_\ast$ may be chosen to be connected as well.

\end{lemma}

\begin{proof}
Let $\vec{\mtcl E}=(e_i\,\mid\, i\leq n)$, where $e_i=\langle (N^i_0, \g^i_0), (N^i_1, \g^i_1)\rangle$ for each $i$. We aim to prove that there is a
$\tau^0\oplus\tau^1$-thread $\langle\vec{\mtcl E}_\ast, x\rangle$ with the following properties.
\begin{enumerate}
\item $\Psi_{\vec{\mtcl E}}(x)=\Psi_{\vec{\mtcl E}_\ast}(x)$
\item If every $e_i$ has $(\tau^0, \tau^1)$-rank $0$, then $\vec{\mtcl E}_\ast=\vec{\mtcl E}$.
\item If some $e_i$ has positive $(\tau^0, \tau^1)$-rank, then the maximum $(\tau^0, \tau^1)$-rank of the members of $\vec{\mtcl E}_\ast$ is strictly less than the maximum $(\tau^0, \tau^1)$-rank of the members of $\vec{\mtcl E}$.
\item The following holds, where $\vec{\mtcl E}_\ast=(e^\ast_i\,\mid\,i\leq n^*)$.
\begin{enumerate}
\item $e^\ast_0  = e_0$ if $e_0$ comes from $\tau^0\cup\tau^1$.

\item If there are generalized $\vec{\Phi}$-edges $e=\langle(N_0, \g_0), (N_1, \g_1)\rangle$ and $e'$ coming from $\tau^0\oplus \tau^1$, both of rank less than the rank of $e_0$, such that
$e'\in N_0$ and $e_0=\Psi_e(e')$,
then $$e^\ast_0= \langle (N_1, \g_1), (N_0, \g_0)\rangle,$$ where $\langle (N_0, \g_0), (N_1, \g_1)\rangle$ is some generalized $\vec{\Phi}$-edge as a\-bove.
\item $e^*_{n^*}  = e_n$ if $e_n$ comes from $\tau^0\cup\tau^1$.
\item If there are generalized $\vec{\Phi}$-edges $e=\langle(N_0, \g_0), (N_1, \g_1)\rangle$ and $e'$ coming from $\tau^0\oplus \tau^1$, both of rank less than the rank of $e_n$, such that $e'\in N_0$ and $e_n=\Psi_e(e')$,
 then $$e^\ast_{n^*}= \langle (N_0, \g_0), (N_1, \g_1)\rangle,$$ where $\langle (N_0, \g_0), (N_1, \g_1)\rangle$ is some generalized $\vec{\Phi}$-edge as a\-bove.
\end{enumerate}
\item If $x=(y, \a)\in H(\k^+)\times\k^+$ and $\langle\vec{\mtcl E}, x\rangle$ is connected, then $\vec{\mtcl E}_\ast$ is connected.
\end{enumerate}


The proof of (1)--(5) will be by induction on $n$. We may obviously assume that there is some $i<n$ such that $e_i$ does not come from $\tau^0\cup\tau^1$. Then there are generalized $\vec{\Phi}$-edges $e=\langle (N_0, \g_0), (N_1, \g_1)\rangle$ and $e'\in N_0$ coming from $\tau^0\oplus \tau^1$, both of rank less than $e_i$, and such that $e_i=\Psi_e(e')$.

By induction hypothesis there is a
$\tau^0\oplus \tau^1$-thread $\langle\vec{\mtcl E}_0, x\rangle$, together with a
$\tau^0\oplus \tau^1$-thread of the form $\langle\vec{\mtcl E}_2, \Psi_{\vec{\mtcl E}\restriction i+1}(x)\rangle,$ such that $$\Psi_{\vec{\mtcl E}_0}(x)=\Psi_{\vec{\mtcl E}\restriction i}(x)$$ and
$$\Psi_{\vec{\mtcl E}_2}(\Psi_{\vec{\mtcl E}\restriction i+1}(x))=\Psi_{\vec{\mtcl E}\restriction [i+1, n)}(\Psi_{\vec{\mtcl E}\restriction i+1}(x))=\Psi_{\vec{\mtcl E}}(x),$$ and such that the relevant instances of (1)--(5) hold for $\langle\vec{\mtcl E}_0, x\rangle$ and $\langle\vec{\mtcl E}_2, \Psi_{\vec{\mtcl E}\restriction i+1}(x)\rangle$.
Also, by the choice of $e_i$, the thread $\langle \vec{\mtcl E}_1, \Psi_{\vec{\mtcl E}\restriction i}(x)\rangle$ sa\-tis\-fies the instances of (1) and (3)--(5) corresponding to $\langle (e_i), \Psi_{\vec{\mtcl E}\restriction i}(x)\rangle$, where $(e_i)$ is the sequence whose only member is $e_i$, and where $\vec{\mtcl E}_1=(e^{-1}, e', e)$.
But now we may take $\vec{\mtcl E}_\ast$ to be the concatenation of $\vec{\mtcl E}_0$, $\vec{\mtcl E}_1$, and $\vec{\mtcl E}_2$.

Finally, it follows from clause (3) that after iterating the above construction some finite number of times we obtain a $\tau^0\cup\tau^1$-thread $\langle\vec{\mtcl E}_\ast, x\rangle$ as desired.
\end{proof}

\section{Definition of the forcing and its basic properties}
\label{definition of forcing notion}

We shall now define our sequence $\langle\MQB_\beta \mid \beta \leq \kappa^{+}\rangle$ of forcing notions and our sequence $\langle\Phi_\b \mid 0<\b<\k^+\rangle$ of predicates.\footnote{The reader should keep in mind the overview of the construction we gave in the introduction.} We recall that $\Phi_0$ has already been defined.

For each $\a\in\mtcl X$, assuming $\MQB_\a$ has been defined and there is a $\MQB_\a$-name $\lusim{T}\in H(\k^+)$ for a $\k$-Arosnzajn tree,
we let $\lusim{T}_\a$ be such a $\MQB_\a$-name. Further, if $\Phi(\a)$ is a $\MQB_\a$-name for a $\kappa$-Aronszajn tree,
then we let
 $\lusim{T}_\a = \Phi(\a)$. For simplicity of exposition we will assume that the universe of $\lusim{T}_\a$ is forced to be $\kappa \times \omega_1$ and that for each $\rho < \kappa,$ its $\rho$-th level is $\{\rho\} \times \omega_1$. We will often refer to members of $\k\times\o_1$ as \emph{nodes}.

As we will see, each forcing notion $\MQB_\beta$ in our construction will consist of ordered pairs of the form $q=(f_q, \tau_q)$, where $f_q$ is a function and $\tau_q$ is a set of edges below $\b$.
Given a nonzero ordinal $\a<\k^+$ and an ordinal $\d<\k$, we will write $\MQB^{\d}_{\a+1}$ to denote the suborder of $\MQB_{\a+1}$ consisting of those conditions $q$ such that $\d_{N_0}<\d$ for every edge $\langle (N_0, \g_0), (N_1, \g_1)\rangle$ coming from $\tau_q$ such that at least one of $\g_0$, $\g_1$ is $\a+1$.\footnote{We note that there is no requirement on the heights of the nodes occurring in $f_q(\a)$. Also, despite possible first impressions, there is no circularity in the definition of $\MQB^\d_{\a+1}$ (s.\ Remark \ref{clarifying}).}

Now suppose that $\beta \leq \kappa^{+}$ and that $\MQB_\alpha$ and $\Phi_\a$ have been defined for all $\alpha < \beta$.
Given an ordered pair $q=(f_q, \tau_q)$, where $f_q$ is a function and $\tau_q$ is a set of edges, and given an ordinal $\a$, we denote by $q\restriction\alpha$ the ordered pair $(f_q\restriction \a, \tau_q\restriction\a)$.

We are now ready to define $\MQB_\beta$ and $\Phi_\b$.

We start with the definition of $\MQB_\beta$. A condition in $\MQB_\beta$ is an ordered pair of the form $q= (f_q, \tau_q)$ with the following properties.

\begin{enumerate}

\item $f_q$ is a countable function such that $$\dom(f_q) \subseteq \b$$ and such that the following holds for every $\alpha \in \dom(f_q)$.

\begin{enumerate}

\item If $\a=0$, then $f_q(\a)$ is a condition in $\Col(\omega_1,\, {<}\kappa)$, the L\'{e}vy collapse turning $\kappa$ into $\aleph_2$, i.e., $f_q(0)$ is a countable function with domain included in $\kappa\times\omega_1$ such that $(f_q(0))(\rho, \xi)<\rho$ for all $(\rho, \xi)\in\dom(f_q(0))$.

\item If $\a>0$, then $f_q(\alpha)\in [\kappa\times \omega_1]^{{\leq}\al_0}$.
\end{enumerate}

\item $\tau_q$ is a countable set of $(\vec{\Phi}\restriction \b)$-edges below $\b$.

\item $q\restriction\a\in\MQB_\a$ for all $\alpha < \beta$.

\item For every nonzero $\alpha <\b$ such that $\lusim{T}_{\m(\a)}$ is defined,\footnote{We will see that in fact each $\lusim{T}_{\m(\a)}$ is defined.}
if $x_0\neq x_1$ are nodes in $f_q(\a)$, then $q\restriction\m(\a)$ forces $x_0$ and $x_1$ to be incomparable in $\lusim{T}_{\m(\a)}$.

\item $\tau_q$ is closed under copying.

\item For every edge $\langle (N_0, \g_0), (N_1, \g_1)\rangle\in \tau_q$ and  every $\a\in N_0\cap\g_0$ such that $\bar\a:=\Psi_{N_0, N_1}(\a)<\g_1$, if $\a\neq 0$, $\a\in\dom(f_q)$, and $x\in f_q(\a)\cap N_0$, then
\begin{enumerate}
\item $\bar\a\in\dom(f_q)$, and
\item $x \in f_q(\bar\a)$.\footnote{Note that $\Psi_{N_0, N_1}(x)=x$.}
 \end{enumerate}

  \item Suppose $\a<\b$, $e=\langle (N_0, \a+1), (N_1, \g_1)\rangle$ is a generalized $(\vec{\Phi}\upharpoonright \a+1)$-edge coming from  $\tau_q\restriction\a+1$,  $r\in\MQB^{\d_{N_0}}_{\a+1}$ is such that $e\restriction\a$ comes from $\tau_r$,
  $\vec{\mtcl E}=(\langle (N^i_0, \g^i_0), (N^i_1, \g^i_1)\rangle\,\mid\,i< n)$ is a sequence of generalized $(\vec{\Phi}\restriction \a+1)$-edges coming from $\tau_r\cup\{e\}$ such that $\langle (N^0_0, \g^0_0), (N^0_1, \g^0_1)\rangle=e$ and $\langle \vec{\mtcl E}, (\emptyset, \a)\rangle$ is a correct thread. Let $\d=\min\{\d_{N^i_0} \mid i<n\}$ and $\bar\a= \Psi_{\vec{\mtcl E}}(\a)$. Suppose
  $r\restriction\m(\a)$ forces every two distinct nodes in $f_r(\bar\a)\cap (\d\times\o_1)$ to be incomparable in $\lusim{T}_{\m(\a)}$.
 Then there is an extension $r^*\in\MQB^{\d_{N_0}}_{\a+1}$ of $r$ such that
 \begin{enumerate}
 \item $f_{r}(\a)\cap(\d\times\o_1)\subseteq f_{r^*}(\bar\a)\cap(\d\times\o_1)$, and
\item $r^*\restriction \m(\a)$ forces every two distinct nodes in $$(f_{r}(\bar\a)\cap (\d\times\o_1))\cup  f_r(\a)$$ to be incomparable in $\lusim{T}_{\m(\a)}$.
 \end{enumerate}
 \end{enumerate}

The extension relation on $\MQB_\b$ is defined in the following way:

Given $q_1, q_0 \in \MQB_\beta$, $q_1 \leq_{\MQB_{\beta}} q_0$ ($q_1$ is an extension of $q_0$) if and only if  the following holds.
\begin{enumerate}
\item $\dom(f_{q_0}) \subseteq \dom(f_{q_1})$

\item For every $\a\in \dom(f_{q_0})$, $f_{q_0}(\a)\subseteq f_{q_1}(\a)$.

\item For every $\langle (N_0,  \g_0), (N_1, \g_1)\rangle \in \tau_{q_0}$ there are $\g_0'\geq\g_0$ and $\g_1' \geq \g_1$ such that $\langle (N_0,  \g'_0), (N_1, \g'_1)\rangle\in \tau_{q_1}$.
\end{enumerate}

Finally, if $\b>0$, then $\Phi_\b$ is a subset of $H(\k^+)$ canonically coding $\langle \Phi_\a \mid \a<\b\rangle$, $\langle \MQB_\a\mid \a\leq\b\rangle$ and $\langle \Vdash_{\MQB_\a}^{H(\k^+)} \mid \a\leq\b\rangle$, where for each $\a\leq\b$, $\Vdash_{\MQB_\a}^{H(\k^+)}$ denotes the forcing relation restricted to formulas with $\MQB_\a$-names in $H(\k^+)$ as parameters.

We may, and will, assume that the definition of $\langle \Phi_\b\mid 0<\beta<\k^+\rangle$ is uniform in $\b$.

\begin{remark} \normalfont
Having fixed the sequence $\vec{\Phi}=\langle \Phi_\b\mid 0<\beta<\k^+\rangle$ of predicates as above, by an edge we always mean a $\vec{\Phi}$-edge, and similarly for other concepts.
\end{remark}

\begin{remark} \normalfont
Given any $\a < \k^+$, there is a natural map
$$\pi_\a: \MQB_\a \to \MPB^{\textsf{e}}_{\vec{\Phi}\restriction \a, \a},$$
defined by $\pi_\a(q)=\tau_q$. However, $\pi_\a$ is not necessarily
a projection of forcing notions, as given a condition $q \in \MQB_\a$ there might exist
$\tau_q \subseteq \tau \in \MPB^{\textsf{e}}_{\vec{\Phi}\restriction \a, \a}$ such that $\tau_{q'} \not\leq_{\MPB^{\textsf{e}}_{\vec{\Phi}\restriction \a, \a}} \tau$ for all
${q'} \leq q$.
\end{remark}

\begin{remark}\label{clarifying} \normalfont
Despite possible first impressions due to the presence of clause (7), our definition of $\MQB_{\a+1}$-condition, for a given $\a<\k^+$, is not circular. Rather, the definition of `$q$ is a $\MQB_{\a+1}$-condition' is to be seen, because of that clause, as  being by recursion on the supremum of the set of heights of models $N_0$ occurring in edges $\langle (N_0,\g_0), (N_1,\g_1)\rangle$ in $\tau_q$. Indeed, given any $q$ satisfying clauses (1)--(6), in order to verify whether or not $q$ satisfies also (7) we check whether for each generalized $(\vec{\Phi}\upharpoonright \a+1)$-edge $e=\langle (N_0, \a+1), (N_1, \g_1)\rangle$  coming from  $\tau_q\restriction\a+1$ it is the case that some condition holds depending only on $\langle\MQB_\b\mid\b\leq\a\rangle$, $e$, and $\MQB^{\d_{N_0}}_{\a+1}$, which consists of conditions $q'$ with $\d_{N_0'}<\d_{N_0}$ for every $\langle (N_0', \g_0'), (N_1', \g_1')\rangle\in \tau_{q'}$.
\end{remark}

Before moving on to the next subsection, we will briefly address the need for, and nature of, clause (7) in our definition of condition. As already mentioned in the introduction, the proof that our forcing satisfies the $\kappa$-chain condition is an adaptation, in our present context, of the Laver-Shelah proof that their forcing in \cite{laver-shelah} has the $\kappa$-chain condition. The only potential obstacles to making such an adaptation work may come from our present requirements that a condition $q$ be closed under copying of all the relevant information, as dictated by the presence of edges $\langle (N_0, \g_0), (N_1, \g_1)\rangle$ in its side condition $\tau_q$, and where this includes the information coming from the working part $f_q$.

In the proof of the $\k$-c.c.\ of $\MQB_\b$, given $A\subseteq\MQB_\b$ such that $|A|=\k$, we need to find two distinct conditions in $A$ which are compatible. As we said, we would like to do that following the ideas in the $\k$-c.c.\ proof from \cite{laver-shelah} as closely as possible. Now, due to technical reasons coming from the present copying requirements, in order to do this we seem to need to work under the assumption that all conditions in $A$ have an additional property, namely that they are what we call \emph{adequate conditions} (s.\ Definition \ref{adequate-condition}). One of the requirements for $q$ to be an adequate condition is that it be suitably closed under copying not only via edges from $\tau_q$, but also via the corresponding anti-edges. In particular, if $\langle (N_0, \g_0), (N_1, \g_1)\rangle$ is an edge in $\tau_q$, then not only are we to copy the information from the working part sitting in $N_0$ into $N_1$ (into the past) but also to copy the information sitting in $N_1$ into $N_0$ (into the future); and similarly for the edges in $N_1$ with markers at most $\a$ for $\a\in\dom(f_q)\cap N_1\cap\g_1$ such that $f_q(\a)\cap N_1\neq\emptyset$ and $\Psi_{N_1, N_0}(\a)<\g_0$.


Now, the main obstacle for proving that the set of adequate conditions is dense---and this is the motivation behind clause (7)---is the following: take the situation in which there is an edge $\langle (N_0, \g_0), (N_1, \g_1)\rangle\in\tau_q$ with $\a\in N_0\cap\g_0$, $\bar\a:=\Psi_{N_0, N_1}(\a)<\g_1$, some $x\in f_q(\bar\a)$ of height less than $\d_{N_1}$ ($=\d_{N_0}$), and some $y\in f_q(\a)$ of height at least $\d_{N_0}$. If $q'$ were to be any adequate condition extending $q$, it would have to be the case that $x\in f_{q'}(\a)$. However, unless we have an extra clause preventing it, it could for example be that $y$ is forced to be above $x$ in  $\lusim{T}_{\m(\a)}$, which would make it impossible for such a $q'$ to exist.\footnote{The problematic configuration can of course be described in slightly more general terms.}

Our way around this difficulty is to incorporate, in our definition, a clause which stipulates that the above operation can be carried out. This is in essence what clause (7) says.\footnote{A more naive (and simpler-looking) approach would be to require that if $\langle (N_0, \g_0), (N_1, \g_1)\rangle$ is an edge from $\tau_q$, then $\MQB_{\a+1}\cap N_0$ is a complete suborder of $\MQB_{\a+1}$. This would have the intended effect. However, such a condition cannot be expressed without circularity.} Fortunately, the intended content can indeed be expressed (cf.\ the previous footnote); our device for doing so is to phrase this content by reference to a well-defined suborder $\MQB_{\a+1}^\d$ of $\MQB_{\a+1}$---namely the set of conditions $q\in \MQB_{\a+1}$ all of whose edges of form $\langle (N_0, \a+1), (N_1, \g_1)\rangle$ are such that $\d_{N_0}<\d$ (but allowing all nodes in $f_q(\alpha)$ to be of any height below $\k$). Hence, due to the presence of this clause (7), the definition of $q$ being a $\MQB_{\a+1}$-condition is ultimately to be seen as being given by recursion on the supremum of the collection of heights of models occurring in edges of the form $\langle (N_0, \a+1), (N_1, \g_1)\rangle$ (i.e., those edges not coming from the restriction of $q$ to $\a$).\footnote{Let us reconsider for a second the situation described a few lines earlier. Suppose $q\in\MQB_{\a+1}$,  $\langle (N_0, \g_0), (N_1, \g_1)\rangle\in\tau_q$, and $\a$ and $\bar\a$ are as in that description. Suppose $x\in f_q(\bar\a)$ is of height less than $\d_{N_1}$ and $y$ is a node of height at least $\d_{N_0}$ such that, say, $q\restriction \m(\a)$ happens to force $y$ to be above $x$ in $\lusim{T}_{\mu(\a)}$. It is then of course impossible to extend $q$ to a condition $q'$ such that $x\in f_{q'}(\a)$. However, we can certainly pick $\a'$ such that $\m(\a')=\m(\a)$ and such that $q$ can be extended (trivially) by making $f_{q'}(\a')=\{x\}$. This will ensure that the generic specializing function  for $\lusim{T}_{\m(\a)}$ will be defined everywhere (cf.\ the proof of Lemma
\ref{satp}).}

\subsection{Basic properties of $\langle\MQB_\b\,\mid\,\b\leq\k^{+}\rangle$}\label{basic-properties}

Our first lemma follows immediately from the choice of the predicates $\Phi_\a$.

\begin{lemma}\label{definability} For every nonzero $\a<\k^+$, $\MQB_\a$ and $ \Vdash_{\MQB_\a}^{H(\k^+)}$ are definable over the structure $$(H(\k^+), \in, \Phi_{\a})$$ without parameters. Moreover, this definition is uniform in $\a$.
\end{lemma}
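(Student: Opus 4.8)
The plan is to unwind the definition of $\Phi_\a$ and argue that the usual satisfaction-predicate formalism, together with the fact that $\Phi_\a$ canonically codes all the relevant earlier data, makes both $\MQB_\a$ and $\Vdash_{\MQB_\a}^{H(\k^+)}$ first-order definable over $(H(\k^+), \in, \Phi_\a)$. First I would recall that, by the definition given just before the lemma, $\Phi_\b$ is a subset of $H(\k^+)$ that canonically codes the sequences $\langle\Phi_\a\mid\a<\b\rangle$, $\langle\MQB_\a\mid\a\le\b\rangle$ and $\langle\Vdash_{\MQB_\a}^{H(\k^+)}\mid\a\le\b\rangle$; here ``canonically codes'' should be read as: there is a fixed (parameter-free) way of reading off each of these objects from $\Phi_\b$, using the canonical bijection $F:\k^+\to H(\k^+)$ which is itself definable from $\Phi=\Phi_0$ (and hence from $\Phi_\a$, which codes $\Phi_0$). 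Since $\MQB_\a$ appears as an entry of the sequence coded by $\Phi_\a$ (taking $\b=\a$, so that $\a\le\b$), $\MQB_\a$ is definable over $(H(\k^+),\in,\Phi_\a)$ by the formula asserting ``$x$ is the $\a$-th entry of the sequence of forcing notions coded by the predicate'', and similarly for $\Vdash_{\MQB_\a}^{H(\k^+)}$.

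Next I would address uniformity. The key point is the sentence ``We may, and will, assume that the definition of $\langle\Phi_\b\mid 0<\b<\k^+\rangle$ is uniform in $\b$'', which guarantees that there is a single formula $\psi(v,w)$ such that, for every nonzero $\b$, $\Phi_\b=\{x\in H(\k^+)\mid (H(\k^+),\in,\Phi\restriction\text{(relevant part)})\models\psi(x,\b)\}$ — or, more to the point, a single procedure for decoding from $\Phi_\a$ the ordinal $\a$ it ``belongs to'' and the sequences it codes. Concretely, $\Phi_\a$ can be taken to include, as part of its canonical coding, the ordinal $\a$ itself (e.g.\ via $F$), so that $(H(\k^+),\in,\Phi_\a)$ can define $\a$, and then $\MQB_\a$ is defined uniformly as ``the last entry of the sequence of forcing notions coded by the second component of the predicate''. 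Thus the same formula works for all $\a$, which is exactly the ``uniform in $\a$'' clause.

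The only genuine content beyond bookkeeping is checking that $\Vdash_{\MQB_\a}^{H(\k^+)}$ is a legitimate object to have coded into $\Phi_\a$ and retrieved, i.e.\ that the forcing relation restricted to $\MQB_\a$-names in $H(\k^+)$ is itself a subset of $H(\k^+)$ (more precisely, of $\MQB_\a\times\{\text{formulas with parameters in }H(\k^+)\}$, suitably coded as a subset of $H(\k^+)$); this is standard since $\MQB_\a$ has size at most $\k^+$ and $H(\k^+)$ is closed under the relevant Gödel-coding of formulas, so $\Vdash_{\MQB_\a}^{H(\k^+)}$ is coded by an element of $H(\k^{++})$, and its characteristic class on $H(\k^+)$ is a subset of $H(\k^+)$ — exactly the sort of object $\Phi_\a$ is designed to encode. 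I would remark that this is the reason the definition of $\Phi_\a$ was set up to code the $\Vdash_{\MQB_\a}^{H(\k^+)}$'s explicitly rather than to rely on defining forcing from scratch inside $(H(\k^+),\in,\Phi_\a)$.

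The main obstacle is essentially notational rather than mathematical: one has to be careful that the recursion defining $\langle\MQB_\b,\Phi_\b\rangle$ does not secretly require parameters beyond $\Phi_\a$ at stage $\a$ — but by construction $\Phi_\a$ already packages $\langle\MQB_{\a'}\mid\a'\le\a\rangle$, $\langle\Phi_{\a'}\mid\a'<\a\rangle$ and the relevant forcing relations, so everything needed at and below stage $\a$ is available, and the definition of ``condition in $\MQB_\a$'' (clauses (1)–(8)) refers only to these objects plus $\Phi$, $F$, $\vec e$, the $\lusim T$'s and the $\Phi_{\a'}$'s for $\a'<\a$, all of which are decodable from $\Phi_\a$. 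Hence the lemma ``follows immediately from the choice of the predicates $\Phi_\a$'', as announced.
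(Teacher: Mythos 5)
Your proposal is correct and matches the paper, which offers no argument at all beyond the remark that the lemma ``follows immediately from the choice of the predicates $\Phi_\a$'': since $\Phi_\a$ is stipulated to canonically (and uniformly in $\a$) code $\langle \MQB_{\a'}\mid \a'\leq\a\rangle$ and $\langle \Vdash_{\MQB_{\a'}}^{H(\k^+)}\mid \a'\leq\a\rangle$, both objects are read off from the predicate by a single parameter-free formula, exactly as you describe. Your additional check that $\Vdash_{\MQB_\a}^{H(\k^+)}$ is indeed a subset of $H(\k^+)$ amenable to such coding is the right point to verify, and is the reason the forcing relations are coded into the $\Phi_\a$'s explicitly (your reference to ``clauses (1)--(8)'' should read (1)--(7), a harmless slip).
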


Our next lemma follows from the fact that $\MQB_1$ is essentially the L\'{e}vy collapse turning $\k$ into $\o_2$.

 \begin{lemma}\label{levy} $\MQB_1$ forces $\kappa=\aleph_2$.
 \end{lemma}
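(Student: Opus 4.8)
The plan is to show that $\MQB_1$ is, up to a dense embedding, exactly the L\'{e}vy collapse $\Col(\o_1, {<}\k)$, and then invoke the standard fact that this forcing turns $\k$ into $\al_2$. First I would analyze what a condition $q = (f_q, \tau_q)$ in $\MQB_1$ looks like. Since $\b = 1$, we have $\dom(f_q) \subseteq \{0\}$, so $f_q$ is determined by the single value $f_q(0)$, which by clause (1a) is a condition in $\Col(\o_1, {<}\k)$ (a countable partial function with domain $\subseteq \k \times \o_1$ such that $(f_q(0))(\r, \x) < \r$), together with a countable set $\tau_q$ of edges below $1$. I would check that the side condition $\tau_q$ is essentially inert at this stage: any edge $\langle (N_0, \g_0), (N_1, \g_1)\rangle$ below $1$ has markers $\g_0, \g_1 \in \{0\}$ (as $\g_\e \in \cl(N_\e) \cap \k^+$ and $\g_\e \leq \b = 1$, and $1 \notin \cl(N_\e)$ would require $\sup(N_\e \cap 1) = 1$, impossible, so actually $\g_\e = 0$), hence the conditions in clauses (4), (6), (7) about stages $\a \in N_0 \cap \g_0 = \emptyset$ are vacuous, and likewise clause (5) on closure under copying imposes nothing on $f_q$. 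So a condition in $\MQB_1$ is really a pair consisting of a Levy collapse condition and an arbitrary countable set of edges below $1$ subject only to clauses (2), (3) — which are automatic here.

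Next I would make the projection precise. Define $\pi: \MQB_1 \to \Col(\o_1, {<}\k)$ by $\pi(q) = f_q(0)$ (with the convention that $f_q(0) = \emptyset$ if $0 \notin \dom(f_q)$). By the extension relation on $\MQB_1$ — clauses (1) and (2) force $f_{q_0}(0) \subseteq f_{q_1}(0)$ whenever $q_1 \leq q_0$ — this $\pi$ is order-preserving. Conversely, given any $p \in \Col(\o_1, {<}\k)$, the pair $q_p = (\{(0, p)\}, \emptyset)$ is a condition in $\MQB_1$ with $\pi(q_p) = p$, and the map $p \mapsto q_p$ is order-preserving. Moreover $\pi$ is such that whenever $\pi(q) = p' \leq p = \pi(q')$ in $\Col(\o_1, {<}\k)$, the condition $q^* = (f_q \cup \{(0, p')\}\restriction\{0\}, \tau_q \cup \tau_{q'})$ — more carefully, the pair with working part the function sending $0$ to $f_q(0)$ and with side condition $\tau_q \cup \tau_{q'}$ — is a common extension of $q$ and $q'$ in $\MQB_1$, since the union of two countable sets of edges below $1$ is again such a set and all the clauses remain vacuous. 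Hence the two-step iteration collapses and $\MQB_1$ is forcing-equivalent to $\Col(\o_1, {<}\k)$ (indeed $\RO(\MQB_1) \cong \RO(\Col(\o_1, {<}\k))$).

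Finally, I would invoke the well-known properties of the L\'{e}vy collapse: $\Col(\o_1, {<}\k)$ is $\s$-closed, hence preserves $\o_1$; it has the $\k$-c.c.\ because $\k$ is inaccessible (being weakly compact), so it preserves all cardinals $\geq \k$; and it visibly collapses every cardinal in the interval $[\o_2, \k)$ to have cardinality $\o_1$. Therefore in $V^{\MQB_1}$ we have $\k = \o_2$, which is the assertion. The main obstacle — really the only thing needing care rather than routine verification — is the bookkeeping around the side condition: one must confirm that at stage $\b = 1$ every clause in the definition of condition that mentions stages, markers, correct threads, or closure under copying is genuinely vacuous (because the relevant index sets $N_0 \cap \g_0$ are empty and all markers are $0$), so that $\tau_q$ is a free rider that does not interact with $f_q(0)$ at all. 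Once that is nailed down, the dense-embedding argument and the citation of the standard L\'{e}vy collapse facts finish the proof. \rmark
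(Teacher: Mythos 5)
Your route is the same as the paper's: the paper offers nothing beyond the remark that $\MQB_1$ is essentially $\Col(\o_1,{<}\k)$, and your fleshing-out (side conditions are inert at stage $1$, the map $q\mapsto f_q(0)$ gives a forcing equivalence with the L\'evy collapse, then quote the standard collapse facts over an inaccessible) is exactly the intended argument, and its skeleton is correct. Two of your supporting claims are wrong as stated, though both are repairable without new ideas. First, it is not true that every marker of an edge below $1$ is $0$: every model $N$ occurring in an edge satisfies $N\prec H(\k^+)$, so $1\in N\subseteq\cl(N)$ and the marker may perfectly well be $1$. Hence clause (7) is not vacuous for the reason you give (it speaks of generalized edges $\langle (N_0,\a+1),(N_1,\g_1)\rangle$ with $\a=0$, i.e.\ marker $1$, which do occur). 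It is nonetheless trivially satisfied at $\b=1$: the only possible stage is $\a=0$, any correct thread sends $0$ to $\bar\a=0$, so requirement (a) is automatic, and requirement (b) is degenerate at stage $0$ (the entry $f_r(0)$ is a collapse condition rather than a set of nodes, and $\lusim{T}_0$ is not even defined, $\k$ being weakly compact), so one may take $r^*=r$. Second, clause (5) is also not vacuous: $\tau_q$ must be closed under copying, so in your amalgamation step the plain union $\tau_q\cup\tau_{q'}$ need not be a legal side condition. The repair is to use $\tau_q\oplus\tau_{q'}$ instead, which is countable, closed under copying by construction, and still consists of edges below $1$ (all the relevant ordinals $\p^{\g',N'}_{N_0,\g_0,N_1,\g_1}$ equal $0$ here). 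With these corrections your compatibility criterion (conditions in $\MQB_1$ are compatible iff their working parts are compatible in $\Col(\o_1,{<}\k)$), the surjective, order- and incompatibility-preserving projection, and hence the equivalence $\RO(\MQB_1)\cong\RO(\Col(\o_1,{<}\k))$ all go through, and the standard L\'evy collapse facts finish the proof as you say.
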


The following lemma is also an easy consequence of the definition of condition.

\begin{lemma}\label{limit} For every $\b\leq \k^{+}$, $\MQB_\a\subseteq\MQB_\b$ for all $\a<\b$. \end{lemma}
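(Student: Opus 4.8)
The plan is to prove, by induction on $\b\leq\k^+$, the stronger statement that for all $\a<\b$ the set $\MQB_\a$ is literally a subset of $\MQB_\b$ (not merely order-embedded), that the extension relation of $\MQB_\b$ restricted to $\MQB_\a$ coincides with that of $\MQB_\a$, and that $q\restriction\a\in\MQB_\a$ whenever $q\in\MQB_\b$. The inclusion $\MQB_\a\subseteq\MQB_\b$ is really a matter of unwinding the definition of condition: if $q=(f_q,\tau_q)\in\MQB_\a$ then $\dom(f_q)\subseteq\a\subseteq\b$ and $\tau_q$ is a set of edges below $\a$, hence below $\b$, so clauses (1) and (2) for $\MQB_\b$ hold; clause (3) for $\MQB_\b$ (namely $q\restriction\g\in\MQB_\g$ for all $\g<\b$) splits into the cases $\g\leq\a$, where $q\restriction\g=(q\restriction\a)\restriction\g$ — wait, more simply $q\restriction\g$ for $\g\le\a$ is already guaranteed since $q\in\MQB_\a$ and $\MQB_\a$'s clause (3) gives $q\restriction\g\in\MQB_\g$ — and $\a\le\g<\b$, where $q\restriction\g=q$ (as $\dom(f_q)\subseteq\a\le\g$ and $\tau_q$ consists of edges below $\a\le\g$, so $q\restriction\g=q$), which lies in $\MQB_\a$, and this last must be matched against $\MQB_\g$; here one uses the inductive hypothesis $\MQB_\a\subseteq\MQB_\g$ for $\a\le\g<\b$. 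Clauses (4)–(7) are then inherited verbatim because each of them refers only to ordinals and models below $\a$, and the predicates $\lusim{T}_{\m(\cdot)}$, the notions of edge, of closure under copying, and of correct thread are defined uniformly and agree on the relevant initial segments (this is where one invokes that $\Phi_\b$ canonically codes $\langle\Phi_\a\mid\a<\b\rangle$, $\langle\MQB_\a\mid\a\le\b\rangle$, etc., so that ``edge below $\a$'' has the same meaning whether computed in the context of $\MQB_\a$ or of $\MQB_\b$).

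Next I would check that the ordering agrees: for $q_0,q_1\in\MQB_\a$, the three conditions defining $q_1\le_{\MQB_\b}q_0$ — $\dom(f_{q_0})\subseteq\dom(f_{q_1})$, pointwise $f_{q_0}(\a')\subseteq f_{q_1}(\a')$, and the marker-extension clause on $\tau_{q_0}$ — are syntactically identical to those defining $q_1\le_{\MQB_\a}q_0$, so $\le_{\MQB_\b}\restriction\MQB_\a\ =\ \le_{\MQB_\a}$. Thus $\MQB_\a$ is a suborder of $\MQB_\b$. For limit $\b$ the argument is the same, pushing everything through the inductive hypothesis applied to pairs $\a<\g<\b$; no new phenomenon arises at limits because a condition $q\in\MQB_\b$ has $\dom(f_q)$ a \emph{countable} subset of $\b$ and $\tau_q$ a countable set of edges below $\b$, so by regularity of $\k$ (and since $\b\le\k^+$, using that $\dom(f_q)$ and the heights appearing in $\tau_q$ are bounded when $\cf(\b)>\o$, or directly for $\cf(\b)=\o$) — actually one does not even need boundedness here, since clause (3) directly delivers $q\restriction\g\in\MQB_\g$ for every $\g<\b$ and that is all that is required.

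The genuinely delicate point, and the one I expect to be the main obstacle, is clause (7): its antecedent quantifies over \emph{all} $r\in\MQB^{\d_{N_0}}_{\a+1}$, i.e.\ over conditions in a forcing that is only being defined by the same recursion, and one must be sure that ``$r\in\MQB^{\d_{N_0}}_{\a+1}$'' means the same thing when $\a+1\le\b$ is read inside $\MQB_\b$ as it does intrinsically. This is handled by the Remark following the definition: for fixed $\a$, $\MQB_{\a+1}$ (hence each $\MQB^{\d}_{\a+1}$) is defined by an inner recursion on the supremum of heights $\d_{N_0}$ of models occurring in side conditions, so ``$r\in\MQB^{\d_{N_0}}_{\a+1}$'' is already fully determined before we ask clause (7) of a condition $q$ whose side-condition models have height $\ge\d_{N_0}$; consequently the meaning of clause (7) is absolute between $\MQB_{\a+1}$ and $\MQB_\b$ for $\a+1\le\b$. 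Once this absoluteness is in hand, clauses (4)–(7) transfer exactly as the superficial clauses do, and the induction closes. I would present the write-up compactly, remarking that clauses (1)–(3) and the ordering are immediate, that (4)–(6) are local and transfer by the uniform coding in $\langle\Phi_\b\rangle$, and that (7) transfers by the nested-recursion observation of the Remark.
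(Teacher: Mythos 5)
Your argument is correct, and it is essentially the verification the paper leaves implicit: Lemma \ref{limit} is stated there without proof as an easy consequence of the definition of condition, and your unwinding (conditions in $\MQB_\a$ have $f_q$ and $\tau_q$ supported below $\a$, so $q\restriction\g=q$ for $\g\geq\a$, clauses (4) and (7) are vacuous at stages in $[\a,\b)$, and the remaining clauses plus the ordering are verbatim the same, with the inductive hypothesis handling clause (3) at intermediate $\g$) is exactly the intended check. Your observation that clause (7) is unproblematic because of the inner recursion on model heights noted in the paper's Remark is also the right way to dispel the only apparent circularity.
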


 Lemma \ref{compl} follows easily from the definition of $\langle\MQB_\a\,\mid\,\a\leq\k^+\rangle$.

 \begin{lemma}\label{compl}
 For all $\a<\b\leq\k^{+}$, $q\in \MQB_\b$, and $r\in\MQB_\a$, if $r\leq_{\MQB_\a} q\restriction \a$, then $$(f_r\cup f_q\restriction [\a,\,\b), \tau_q\cup\tau_r)$$ is a common extension of $q$ and $r$ in $\MQB_\b$. \end{lemma}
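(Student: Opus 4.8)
The plan is to check directly that $s:=(f_r\cup f_q\restriction [\a,\,\b),\ \tau_q\cup\tau_r)$ satisfies all eight clauses in the definition of a condition in $\MQB_\b$, and then that it extends both $q$ and $r$. First I would observe that $f_s$ is a countable function with $\dom(f_s)=\dom(f_r)\cup(\dom(f_q)\cap[\a,\b))\subseteq\b$, and that clause (1) holds because $f_r$ already satisfies it below $\a$ (being a condition in $\MQB_\a$) while $f_q\restriction[\a,\b)$ satisfies it on $[\a,\b)$; similarly $\tau_s=\tau_q\cup\tau_r$ is a countable set of edges below $\b$ since $\tau_r$ consists of edges below $\a\le\b$ and $\tau_q$ of edges below $\b$, giving clause (2). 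The key point throughout is that, because $r\le_{\MQB_\a}q\restriction\a$, we have $f_r(\d)\supseteq f_q(\d)$ for $\d<\a$ and every edge of $\tau_{q}\restriction\a$ is ``end-extended'' by an edge of $\tau_r$; so restricting $s$ below $\a$ recovers (an object extended by) $r$, and restricting it to any $\gga$ with $\a\le\gga<\b$ recovers $(f_r\cup f_q\restriction[\a,\gga),\ \tau_q\restriction\gga\cup\tau_r)$, which one checks inductively is again of this same ``amalgamation'' form. This is what makes clause (3), $s\restriction\gga\in\MQB_\gga$ for all $\gga<\b$, go through by induction on $\gga$, using Lemma \ref{limit} at limits.

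Next I would verify clauses (4)--(6). For clause (4): if $\gga$ is nonzero with $\lusim T_{\m(\gga)}$ defined and $x_0\ne x_1$ lie in $f_s(\gga)$, then either $\gga<\a$, in which case $f_s(\gga)=f_r(\gga)$ and we use that $r$ is a condition together with the fact that $s\restriction\m(\gga)$ end-extends $r\restriction\m(\gga)$ (hence forces at least as much), or $\gga\ge\a$, in which case $f_s(\gga)=f_q(\gga)$ and we use that $q$ is a condition, noting that $s\restriction\m(\gga)$ extends $q\restriction\m(\gga)$ (here one splits according to whether $\m(\gga)<\a$ or $\m(\gga)\ge\a$, but in both subcases $s\restriction\m(\gga)$ is a common extension of $q\restriction\m(\gga)$ and a restriction of $r$, by the inductive analysis above). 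Clause (5), closure under copying of $\tau_s$: given edges $e,e'\in\tau_s$ with $e'\in N_0$, the models occurring in $e$ and $e'$ are of height $<\kappa$ and are elements of $H(\kappa^+)$, so each of $e,e'$ lies entirely in $\tau_r$ or entirely in $\tau_q$; if both are in $\tau_r$, or both in $\tau_q$, we invoke closure under copying for that condition, and the only genuinely new case, $e'\in\tau_r$ and $e\in\tau_q$ (or vice versa), is handled by noting $e'$ (being below $\a$) is then, up to end-extension of markers, an edge of $\tau_{q}\restriction\a\subseteq$(the restrictions of edges of) $\tau_r$, reducing to the case where both come from $\tau_r$; the witnessing edge produced lives below $\a$ and so is again in $\tau_r\subseteq\tau_s$. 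Clause (6) is entirely local to a single edge and a single $f_s(\gga)$, so it reduces, exactly as clause (4) did, to the corresponding clause for whichever of $q$, $r$ that edge and that coordinate came from.

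For clause (7) I would argue that it is about edges $\langle(N_0,\gga+1),(N_1,\gga_1)\rangle$ with $\gga+1\le\b$ and about conditions $t\in\MQB^{\d_{N_0}}_{\gga+1}$; the statement to be witnessed is a statement purely about $\MQB_{\gga+1}$ and names $\lusim T_{\m(\gga)}$, and since $\gga+1\le\b$ the relevant instance for $s$ coincides with the corresponding instance for $q$ when $\gga\ge\a$ (because $s\restriction\gga+1$ is then an amalgamation whose side-condition part and whose $f$-values on $[\a,\gga]$ agree with $q$'s, and the hypothesis and conclusion of clause (7) only mention $\tau$'s, $f$-values, and the fixed forcing relation $\Vdash_{\MQB_{\m(\gga)}}$), while for $\gga<\a$ it coincides with the corresponding instance for $r$. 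Either way the required $r^*$ is supplied by the clause-(7) validity of $q$ or of $r$. The main obstacle I anticipate is precisely this bookkeeping in clauses (3) and (7): one must make the inductive claim ``for every $\gga\le\b$, $s\restriction\gga$ is again a condition of amalgamation type, namely $(f_{r'}\cup f_q\restriction[\gga',\gga),\ \tau_q\restriction\gga\cup\tau_{r'})$ for the appropriate $r'\le q\restriction\gga'$'' precise enough that it feeds itself, and check that the defining recursion of $\MQB_{\gga+1}$ (by sup of heights of models in $\tau$, as in the Remark) is respected — but since $\tau_s$ introduces no models not already in $\tau_q\cup\tau_r$, no new recursion levels appear and this causes no circularity. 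Finally, that $s$ extends $q$ and $r$ is immediate from the definition of $\le_{\MQB_\b}$: $\dom(f_q)\subseteq\dom(f_s)$ with $f_q(\gga)\subseteq f_s(\gga)$ everywhere (using $f_q(\gga)\subseteq f_r(\gga)$ for $\gga<\a$), and $\tau_q\subseteq\tau_s$; and likewise $\dom(f_r)\subseteq\dom(f_s)$, $f_r(\gga)\subseteq f_s(\gga)$, while every edge of $\tau_r$ is in $\tau_s$, and every edge of $\tau_q$ is end-extended by one in $\tau_{r}\subseteq\tau_s$ in the sense required, because $r\le_{\MQB_\a}q\restriction\a$.
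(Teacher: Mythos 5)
Your overall strategy coincides with the paper's: one verifies the clauses in the definition of a $\MQB_\b$-condition directly for $(f_r\cup f_q\restriction[\a,\b),\tau_q\cup\tau_r)$, reducing each clause to the corresponding clause for $q$ or for $r$ via the hypothesis $r\leq_{\MQB_\a}q\restriction\a$. (The paper in fact treats only clauses (5) and (6) in detail and declares everything else, including clause (7), routine; also note there are seven clauses, not eight.)

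The one place where your plan, as written, does not go through is the mixed case of clause (5). If $e=\langle(N_0,\g_0),(N_1,\g_1)\rangle\in\tau_q$ and $e'=\langle(N_0',\g_0'),(N_1',\g_1')\rangle\in\tau_r\cap N_0$, your reduction --- ``$e'$, being below $\a$, is up to end-extension of markers an edge of $\tau_q\restriction\a$'' --- is backwards: an edge of $\tau_r$ need not be related to $\tau_q$ at all; the containment supplied by $r\leq_{\MQB_\a}q\restriction\a$ goes the other way (edges of $\tau_q\restriction\a$ are end-extended by edges of $\tau_r$). What one must do, and what the paper does, is truncate the \emph{outer} edge: $e\restriction\a$ is end-extended in $\tau_r$, and closure under copying for $r$ applied to that edge and $e'$ yields a witness in $\tau_r$. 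But that witness only comes with markers dominating $\p^{\g_0', N_0'}_{N_0, \min\{\g_0,\a\}_{N_0}, N_1, \min\{\g_1,\a\}_{N_1}}$ (and the analogous ordinal for $N_1'$), whereas clause (5) for the amalgam requires domination of $\p^{\g_0', N_0'}_{N_0, \g_0, N_1, \g_1}$, computed with the untruncated markers of $e$. The missing step --- essentially the only computational content of the paper's proof --- is to check that these projection ordinals coincide, which holds because $\g_0',\g_1'\leq\a$ (all markers of edges of $\tau_r$ are at most $\a$); a symmetric check, now truncating the \emph{inner} edge and using $\g_0,\g_1\leq\a$, is needed in the case $e\in\tau_r$, $e'\in\tau_q$. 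These are exactly the paper's cases (2) and (3). The same mechanism also handles the mixed instance of clause (6) (edge from $\tau_q$ with markers beyond $\a$, node taken from $f_r(\eta)$ with $\eta<\a$), which your phrase ``whichever of $q$, $r$ that edge and that coordinate came from'' does not quite cover; apart from this, your verification matches the paper's argument.
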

\begin{proof}
Let $p=(f_r\cup f_q\restriction [\a,\,\b), \tau_q\cup\tau_r).$ We show that $p$ satisfies items (1)--(7) of the definition of a $\MQB_\b$-condition. It suffices to consider (5) and (6), as all other clauses can be proved easily.

We first show that  $p$ satisfies clause (5). Thus let $$e=\langle (N_0, \g_0), (N_1, \g_1)\rangle\in \tau_q\cup\tau_r$$ and $$e'=\langle (N'_0, \g'_0), (N'_1, \g'_1)\rangle \in (\tau_q \cup \tau_r)\cap N_0.$$
We have to show that there are ordinals $\g_0^*\geq \p^{\g_0', N_0'}_{N_0, \g_0, N_1, \g_1}$ and $\g_1^*\geq \p^{\g_1', N_1'}_{N_0, \g_0, N_1, \g_1}$ such that $$\langle (\Psi_{N_0, N_1}(N'_0), \g^*_0), (\Psi_{N_0, N_1}(N'_1), \g_1^*)\rangle \in\tau_q \cup \tau_r.$$
We divide the proof into three cases:
\begin{enumerate}
\item Both $e$ and $e'$ belong to $\tau_q$ (resp.\ $\tau_r$). Then the conclusion is immediate as $q$ (resp.\ $r$) is a condition in $\MQB_\b$.

\item $e \in \tau_q$ and $e' \in \tau_r$. Then  $e \restriction \a \in \tau_{q \upharpoonright \alpha},$ so as $r \leq_{\MQB_\a} q \restriction \a,$ we can find $\g_0^{''}, \g_1^{''} \leq \a$ such that  $\g_0^{''}\geq \min\{\g_0, \a\}_{N_0}$, $\g_1^{''}\geq \min\{\g_1, \a\}_{N_1}$
and
 $\langle (N_0, \g_0^{''}), (N_1, \g_1^{''})          \rangle \in \tau_r$. Hence, as $r$ is a condition, for some
$$\g_0^*\geq \p^{\g_0', N_0'}_{N_0, \g_0^{''}, N_1, \g_1^{''}}$$ and $$\g_1^*\geq \p^{\g_1', N_1'}_{N_0, \g_0^{''}, N_1, \g_1^{''}}$$
 we have
  $$\langle (\Psi_{N_0, N_1}(N'_0), \g^{*}_0), (\Psi_{N_0, N_1}(N'_1), \g_1^{*})\rangle \in\tau_r.$$
As $r \in \MQB_\a$, $e' \in \tau_r$, and $\g_0', \g_1' \leq \a$, and by the choice of $\g_0'', \g_1''$, one can easily show that
$$\p^{\g_0', N_0'}_{N_0, \g_0^{''}, N_1, \g_1^{''}} \geq \p^{\g_0', N_0'}_{N_0, \g_0, N_1, \g_1},$$
and
$$\p^{\g_1', N_1'}_{N_0, \g_0^{''}, N_1, \g_1^{''}} \geq \p^{\g_1', N_1'}_{N_0, \g_0, N_1, \g_1}.$$
Thus $\g_0^* \geq \p^{\g_0', N_0'}_{N_0, \g_0, N_1, \g_1}$ and $\g_1^* \geq \p^{\g_1', N_1'}_{N_0, \g_0, N_1, \g_1}$,
from which  the result follows.

\item $e \in \tau_r$ and $e' \in \tau_q$. Then $e' \restriction \a \in \tau_{q \restriction \a}$ and so 
we can find $\g_0^{''}, \g_1^{''} \leq \a$ such that  $\g_0^{''}\geq \min\{\g'_0, \a\}_{N'_0}$, $\g_1^{''}\geq \min\{\g'_1, \a\}_{N'_1}$
and
 $\langle (N'_0, \g_0^{''}), (N'_1, \g_1^{''})          \rangle \in \tau_r$.
 By the discussion after Definition \ref{defmarker}, $\langle (N'_0, \g_0^{''}), (N'_1, \g_1^{''})          \rangle \in N_0$. Hence, as $r$ is a condition, we can find
$$\g_0^*\geq \p^{\g_0^{''}, N_0'}_{N_0, \g_0, N_1, \g_1}$$ and
$$\g_1^*\geq \p^{\g_1^{''}, N_1'}_{N_0, \g_0, N_1, \g_1}$$ such that $$\langle (\Psi_{N_0, N_1}(N'_0), \g^*_0), (\Psi_{N_0, N_1}(N'_1), \g_1^*)\rangle \in\tau_r.$$
As $r \in \MQB_\a$, $e \in \tau_r$, and $\g_0, \g_1 \leq \a$, and by the choice of $\g_0^{''}, \g_0^{''}$, one can again easily show that
$$\p^{\g_0^{''}, N_0'}_{N_0, \g_0, N_1, \g_1} \geq \p^{\g_0', N_0'}_{N_0, \g_0, N_1, \g_1}$$
and
$$\p^{\g_1^{''}, N_1'}_{N_0, \g_0, N_1, \g_1} \geq \p^{\g_1', N_1'}_{N_0, \g_0, N_1, \g_1},$$
from which we get that $\g_0^* \geq \p^{\g_0', N_0'}_{N_0, \g_0, N_1, \g_1}$ and $\g_1^* \geq \p^{\g_1', N_1'}_{N_0, \g_0, N_1, \g_1}$, and the result follows.
\end{enumerate}
To show that $p$
satisfies clause (6), let $e=\langle (N_0, \g_0), (N_1, \g_1)\rangle\in \tau_q\cup \tau_r$, $\eta \in (\dom(f_r) \cup \dom(f_q))\cap N_0\cap\g_0$, $\eta \neq 0$, and $x \in (f_r(\eta) \cup f_q(\eta))\cap N_0$. We have to show that $\bar \eta \in \dom(f_r) \cup \dom(f_q)$ and $x \in f_r(\bar\eta) \cup f_q(\bar\eta)$, where $\bar\eta=\Psi_{N_0, N_1}(\eta)$.

If $\eta < \a,$ then $\eta \in \dom(f_r)$ and  $x \in f_r(\eta)$.
As $r \leq_{\a} q\restriction\a$, for some $\g_0', \g_1'$ we have $\langle (N_0, \g'_0), (N_1, \g'_1)\rangle\in \tau_r$ (if $e \in \tau_r$, we can take $\g_0'= \g_0$ and $\g_1'=\g_1$, otherwise, we can take $\g'_0 \geq \min\{\g_0, \a   \}_{N_0}$ and  $\g'_1 \geq \min\{\g_1, \a   \}_{N_1}$).
But then $\bar \eta \in \dom(f_r)$ and $x \in f_r(\bar\eta)$.

 Next suppose that $\eta \geq \a.$
In this case we must have $e \in \tau_q$ and $\eta \in \dom(f_q) \setminus \dom(f_r)$. But then $\bar\eta \in \dom(f_q)$ and $x \in f_q(\bar \eta)$.
\end{proof}

Throughout the paper, we write $\MPB\lessdot\MQB$ to denote that $\MPB$ is a complete suborder of $\MQB$ (i.e., $\MPB$ is a suborder of $\MQB$ and maximal antichains in $\MPB$ are also maximal antichains in $\MQB$).

 The following corollary is a trivial consequence of Lemma \ref{compl}.

 \begin{corollary}
  \label{isiteration} $\langle\MQB_\a\,\mid\,\a\leq\k^+\rangle$ is a forcing iteration, in the sense that $\MQB_\a\lessdot\MQB_\b$ for all $\a<\b\leq\k^+$.\end{corollary}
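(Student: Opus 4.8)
The plan is to verify, for fixed $\a<\b\leq\k^{+}$ (the case $\a=\b$ being trivial since every forcing is a complete suborder of itself), the three defining conditions for "$\MQB_\a$ is a complete suborder of $\MQB_\b$": that $\MQB_\a\subseteq\MQB_\b$ with the ordering of $\MQB_\a$ inherited from that of $\MQB_\b$, that incompatibility is absolute between the two, and that every maximal antichain of $\MQB_\a$ remains a maximal antichain of $\MQB_\b$. The inclusion $\MQB_\a\subseteq\MQB_\b$ is exactly Lemma \ref{limit}, and everything else will be squeezed out of clause (3) of the definition of a condition together with Lemma \ref{compl}.

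For the ordering, the key observation is that the extension relation $\leq_{\MQB_\gamma}$ is in fact independent of $\gamma$: its three clauses refer only to $\dom(f_{q_0})$, $\dom(f_{q_1})$, the sets $f_{q_i}(\delta)$, and the edge sets $\tau_{q_i}$, never to the index $\gamma$ itself. Hence for $q_0,q_1\in\MQB_\a$ one has $q_1\leq_{\MQB_\a}q_0$ if and only if $q_1\leq_{\MQB_\b}q_0$, so the ordering of $\MQB_\a$ is literally the restriction of that of $\MQB_\b$.

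For antichains and reductions I would argue as follows. Given $q\in\MQB_\b$, take $q\restriction\a=(f_q\restriction\a,\tau_q\restriction\a)$, which lies in $\MQB_\a$ by clause (3). If $r\in\MQB_\a$ with $r\leq_{\MQB_\a}q\restriction\a$, then Lemma \ref{compl} hands me the explicit common extension $(f_r\cup f_q\restriction[\a,\b),\tau_q\cup\tau_r)\in\MQB_\b$ of $q$ and $r$, so $r$ is compatible with $q$ in $\MQB_\b$; this says precisely that $q\restriction\a$ is a reduction of $q$, from which maximality of antichains is preserved. For absoluteness of incompatibility, suppose $p_1,p_2\in\MQB_\a$ have a common extension $s$ in $\MQB_\b$; then $s\restriction\a\in\MQB_\a$ (clause (3) again), and since $\dom(f_{p_i})\subseteq\a$ and every edge of $\tau_{p_i}$ is an edge below $\a$, truncating $s$ at $\a$ preserves all the inclusions and marker-extensions witnessing $s\leq_{\MQB_\b}p_i$; thus $s\restriction\a$ witnesses that $p_1$ and $p_2$ are already compatible in $\MQB_\a$.

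The whole argument is essentially bookkeeping; there is no genuine obstacle beyond the appeal to Lemma \ref{compl}, which is where the real content (that amalgamating over $q\restriction\a$ respects clauses (5) and (6)) already resides. The one point deserving care in the write-up is the incompatibility step: when I replace an edge $\langle(N_0,\g_0^*),(N_1,\g_1^*)\rangle\in\tau_s$ by its truncation at $\a$, I must check that its markers are still at least as large as those of the corresponding edge $\langle(N_0,\g_0),(N_1,\g_1)\rangle\in\tau_{p_i}$ — which holds because $\g_\epsilon\leq\a$ and $\g_\epsilon\in\cl(N_\epsilon)$ give $\g_\epsilon=(\g_\epsilon)_{N_\epsilon}\leq(\min\{\a,\g_\epsilon^*\})_{N_\epsilon}$.
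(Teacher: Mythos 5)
Your proposal is correct and follows essentially the same route as the paper, which derives the corollary directly from Lemma \ref{compl} (together with Lemma \ref{limit} and clause (3) of the definition of condition), using $q\restriction\a$ as a reduction of $q$ to $\MQB_\a$; you merely spell out the bookkeeping (order-independence of $\leq$, upward absoluteness of compatibility via truncation of a common extension) that the paper leaves implicit in calling the corollary a trivial consequence.
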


 We say that a partial order $\mtcl P$ is $\sigma$-closed if every descending sequence $(p_n)_{n<\o}$ of $\mtcl P$-conditions has a lower bound in $\mtcl P$.
 
\begin{remark} \normalfont
Suppose $\b \leq \k^+$ and  $\langle \tau_n: n<\omega \rangle$ is a $\leq_{\MPB^{\textsf{e}}_{\vec{\Phi}, \beta}}$-decreasing sequence of
conditions in $\MPB^{\textsf{e}}_{\vec{\Phi}, \beta}$ which are closed under copying. Then $\bigcup_{n<\o}\tau_n$ is also a condition
in $\MPB^{\textsf{e}}_{\vec{\Phi}, \beta}$ and is closed under copying. The reason that $\bigcup_{n<\o}\tau_{n}$ is closed under copying is that if $n < m$  and we have
$$e=\langle (N_0, \g_0), (N_1, \g_1)\rangle \in \tau_{m}$$
and
$$e'=\langle (N'_0, \g'_0), (N'_1, \g'_1)\rangle  \in \tau_{n} \cap N_0,$$ then for some $\g''_0, \g''_1$ we have
$e''=\langle (N'_0, \g''_0), (N'_1, \g''_1)\rangle  \in \tau_{m}$, and by the discussion after Definition \ref{defmarker}, $e'' \in N_0$, so as in the proof of Lemma \ref{compl},
for some $\g_0^* \geq \p^{\g_0', N_0'}_{N_0, \g_0, N_1, \g_1}$ and $\g_1^* \geq \p^{\g_1', N_1'}_{N_0, \g_0, N_1, \g_1}$ we have
$$\langle (\Psi_{N_0, N_1}(N'_0), \g^*_0), (\Psi_{N_0, N_1}(N'_1), \g_1^*)\rangle \in\tau_{m}.$$
\end{remark}

\begin{lemma}\label{countable closure}  $\MQB_\b$ is $\sigma$-closed for every $\b\leq\k^{+}$. In fact, every decreasing $\o$-sequence of $\MQB_\b$-conditions has a greatest lower bound in $\MQB_\beta$. In particular,
forcing with $\MQB_{\beta}$ does not add new $\o$-sequences of ordinals, and therefore this forcing preserves both $\omega_1$ and $\CH$. \end{lemma}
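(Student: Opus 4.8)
The plan is to show that an arbitrary descending sequence $(q_n)_{n<\o}$ of $\MQB_\b$-conditions has a greatest lower bound obtained by taking unions coordinatewise. Concretely, set $f_q=\bigcup_{n<\o}f_{q_n}$ and $\tau_q=\bigcup_{n<\o}\tau_{q_n}$, and let $q=(f_q,\tau_q)$. First I would check the easy structural clauses: $f_q$ is a countable union of countable functions, hence countable, with $\dom(f_q)\subseteq\b$, and for each $\a\in\dom(f_q)$ we have $\a\in\dom(f_{q_n})$ for some $n$, so $f_q(\a)=\bigcup_{m\geq n}f_{q_m}(\a)$ is a countable union of countable sets, giving clauses (1a)--(1b); similarly $\tau_q$ is a countable set of edges below $\b$, giving clause (2). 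That $q$ is a lower bound in the extension ordering, once it is shown to be a condition, is immediate from the definitions of $f_q$ and $\tau_q$ together with clause (3) of the extension relation applied to each $q_n$ (and for markers one notes the markers can only increase along the sequence, so their unions still lie in $\cl(N_\e)$ since $\cl(N_\e)$ is closed under suprema of countable subsets when $\cf(\d_{N_\e})>\o$, which holds as $|N_\e|=|N_\e\cap\k|$ makes $\d_{N_\e}$ have uncountable cofinality). Maximality among lower bounds is then clear: any common extension of all $q_n$ must contain $f_q$ and (via clause (3)) must have its side condition dominate $\tau_q$ in markers.

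The substantive point is verifying clauses (3)--(7) for $q$. Clause (3) says $q\restriction\a\in\MQB_\a$ for all $\a<\b$; since $q\restriction\a=(f_q\restriction\a,\tau_q\restriction\a)$ is exactly the coordinatewise union of the $q_n\restriction\a$, this follows by the same argument applied at level $\a$, i.e. by an appeal to the statement of the lemma for $\MQB_\a$ — so the natural way to organize this is as a simultaneous induction on $\b$, which also matches the recursion-on-heights remark after the definition of condition. Clause (4) (incomparability of distinct nodes in $f_q(\a)$) is preserved because any two distinct $x_0,x_1\in f_q(\a)$ already lie in some common $f_{q_n}(\a)$, and $q_n\restriction\m(\a)$ forces them incomparable; since $q\restriction\m(\a)\leq q_n\restriction\m(\a)$, so does $q\restriction\m(\a)$. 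Clause (5), closure under copying, and clause (6), the copying requirement on nodes, are both finitary-in-witnesses: any instance of the hypothesis for $q$ (an edge, or an edge together with a node $x\in f_q(\a)\cap N_0$) already appears at some finite stage $q_n$ — using here that markers only grow, so the relevant $\g$'s are witnessed cofinally — and the required conclusion, obtained from $q_n$ being a condition, transfers upward to $q$ because $\tau_{q_n}\subseteq\tau_q$ and $f_{q_n}\subseteq f_q$, bearing in mind for clause (5) that the quantities $\p^{\g',N'}_{\cdots}$ computed from the markers in $q$ may be larger than those computed in $q_n$, so one must first pass to an $n$ large enough that the markers $\g_0,\g_1,\g_0',\g_1'$ of the two edges in question have stabilized (which happens since each individual marker is nondecreasing in $n$ and bounded by $\k^+$, hence eventually constant).

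The step I expect to be the main obstacle is clause (7). Here the hypothesis for $q$ posits some $r\in\MQB^{\d_{N_0}}_{\a+1}$, a correct thread, etc., and asks for an extension $r^*$; but $r$ is \emph{not} required to be any $q_n$ — it is an arbitrary auxiliary condition — so the passage to a finite stage is not literally available. The resolution is that clause (7) is a property whose hypothesis and conclusion both mention $q$ only through $\tau_q$ (via ``$e$ coming from $\tau_q\restriction\a+1$''): given such an $e=\langle(N_0,\a+1),(N_1,\g_1)\rangle$ coming from $\tau_q\restriction\a+1$, there is $n$ with $e\restriction(\a+1)$ already coming from $\tau_{q_n}\restriction\a+1$ (markers stabilize), and then the very instance of clause (7) that holds for $q_n$ — \emph{with the same $r$, the same thread $\vec{\mtcl E}$, the same $\d$ and $\bar\a$} — already furnishes the desired $r^*\in\MQB^{\d_{N_0}}_{\a+1}$, since $r$, $\vec{\mtcl E}$, the correctness of the thread, and the forcing hypothesis on $r\restriction\m(\a)$ do not refer to $q$ at all. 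Thus clause (7) for $q$ reduces verbatim to clause (7) for a suitable $q_n$. Having established that $q$ is a condition, the final assertions — that forcing with $\MQB_\b$ adds no new $\o$-sequences of ordinals, hence preserves $\o_1$, and that it preserves $\CH$ because $|\MQB_\b|$-style density/nice-name counting together with $2^{\k}=\k^+$ and $\sigma$-closure keeps $2^{\al_0}=\al_1$ — follow from $\sigma$-closure by the standard arguments.
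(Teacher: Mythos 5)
Your construction is the same as the paper's (coordinatewise union of the working parts, plain union $\tau_q=\bigcup_n\tau_{q_n}$ of the side conditions, and an induction on $\a$ to see $q\restriction\a\in\MQB_\a$), and your handling of clause (7) is correct and is indeed the key observation the paper leaves implicit: the hypothesis and conclusion of (7) refer to the condition only through the generalized edge $e$, and since $\tau_q\restriction(\a+1)=\bigcup_n\bigl(\tau_{q_n}\restriction(\a+1)\bigr)$, any instance for $q$ is literally an instance for some $q_n$, with the same $r$, $\vec{\mtcl E}$, $\d$ and $\bar\a$.

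The step that fails is your verification of clause (5). The principle you invoke --- that each marker is nondecreasing in $n$ and bounded by $\k^+$, hence eventually constant --- is false: a strictly increasing $\o$-sequence of ordinals (say $\o_1\cdot n$) is nondecreasing and bounded but never stabilizes, and nothing in the extension relation forces stabilization. Moreover, stabilization is not what is needed. Since $\tau_q$ is the plain union, no new markers are ever formed (so your aside about suprema of markers lying in $\cl(N_\e)$ is moot), and an instance of clause (5) for $q$ is a pair $e\in\tau_{q_m}$, $e'\in\tau_{q_{m'}}$ with $e'\in N_0$, taken with their actual markers. When $m\le m'$ one reduces to clause (5) of $q_{m'}$ applied to the marker-increased avatar of $e$ together with $e'$ itself, using that $\p^{\g',N'}_{N_0,\g_0,N_1,\g_1}$ is monotone in the markers of the first edge; no stabilization is needed, and the same monotonicity argument is what makes clause (6) go through (its hypothesis is upward-closed in the markers of the edge). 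But when $m>m'$ your reduction to a single $q_n$ breaks down: $\tau_{q_m}$ need only contain a marker-increased avatar $\tilde e'$ of $e'$, and since the hypothesis ``$e'\in N_0$'' concerns the pair including its markers, $\tilde e'$ may no longer be an element of $N_0$; then clause (5) of $q_m$ (or of any later $q_n$) cannot be applied to $(e,\tilde e')$, and no single $q_n$ need contain the pair $(e,e')$ at all. This case requires a separate argument (it disappears if one additionally knows $\tau_{q_n}\subseteq\tau_{q_{n+1}}$, which is true of the decreasing sequences actually constructed in the paper but is not part of the extension relation), and as written your proof does not cover it.
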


\begin{proof}
Given a decreasing sequence $(q_n)_{n<\o}$ of $\MQB_\b$-conditions, it is immediate to check that $q=(f, \bigcup_{n<\o}\tau_{q_n})$ is the greatest lower bound of the set $\{q_n\mid n<\o\}$, where $\dom(f)=\bigcup_{n<\o}\dom(f_{q_n})$ and, for each $n<\o$ and $\a\in \dom(f_{q_n})$, $f(\a)=\bigcup\{f_{q_m}(\a) \mid m\geq n\}$. For this one proves, by induction on $\a$, that $q\restriction\a\in\MQB_\a$ for every $\a\leq\b$.
\end{proof}

\begin{remark} \normalfont
Lemma \ref{countable closure}, or rather its proof, will be used, often without mention, in several places in which we run some construction, in $\o$ steps, along which we build some decreasing sequence $(q_n)_{n<\o}$ of conditions. At the end of such a construction we will have that the ordered pair $q=(f, \bigcup_{n<\o}\tau_{q_n})$, where $f$ is given as in the above proof, is the greatest lower bound of $(q_n)_{n<\o}$.
\end{remark}

Given $\a \in \mtcl X$, a node $x=(\r, \zeta)$ in $\kappa\times\o_1$, and an ordinal $\bar\r\leq\r$, if $\MQB_\a$ has the $\k$-c.c., we denote by $A^\a_{x, \bar\r}$ the $F$-first maximal antichain of $\MQB_\a$ in $H(\k^+)$ consisting of conditions
 deciding, for some ordinal $\bar\zeta <\o_1$, that $(\bar\r, \bar\zeta)$ is $\lusim{T}_\a$-below $x$.\footnote{In Lemma \ref{cc} we will prove that each $\MQB_\a$ has the $\k$-c.c. Hence, $A^\a_{x, \bar\r}$ will be defined for all $x$ and $\bar \r$.}
If $x_0=(\r_0, \zeta_0)$ and $x_1=(\r_1, \zeta_1)$ are nodes, $\bar\r\leq\r_0$, $\r_1$, $r_0\in A^\a_{x_0, \bar\r}$, $r_1\in A^\a_{x_1, \bar\r}$, and there are ordinals $\bar\zeta_0\neq\bar\zeta_1$ in $\o_1$ such that $r_0$ forces that $(\bar\r, \bar\zeta_0)$ is $\lusim{T}_\a$-below $x_0$ and $r_1$ forces that $(\bar\r, \bar\zeta_1)$ is $\lusim{T}_\a$-below $x_1$, then we say that \emph{$r_0$ and $r_1$ force $x_0$ and $x_1$ to be incomparable in $\lusim{T}_\a$}.\footnote{This terminology is apt: since for each $\r<\k$, $\{\r\}\times\o_1$ is forced to be the $\r$-th level of $\lusim{T}_{\a}$, we have that every condition in $\MQB_\a$ extending both of $r_0$ and $r_1$ must force that $x_0$ and $x_1$ are incomparable nodes in $\lusim{T}_\a$.}

The following lemma will be often used.

\begin{lemma}\label{transfer} Suppose $q$ is a $\MQB_{\k^+}$-condition, $\a\in \dom(f_q)$, $\a\neq 0$, and $\lusim{T}_{\m(\a)}$ is defined. Suppose $\MQB_{\m(\a)}$ has the $\k$-c.c. Suppose $\langle\vec{\mtcl E}, (\r, \a)\rangle$ is a correct $\tau_q$-thread, where $\r<\k$, and $x_0=(\rho_0, \zeta_0)$ and $x_1=(\rho_1, \zeta_1)$ are two nodes such that
\begin{itemize}
\item $\rho_0$, $\rho_1 \leq \r$, and
\item there is some $\bar\r\leq\r_0$, $\r_1$ such that $q \restriction \m(\a)$ extends conditions $r_0\in A^{\m(\a)}_{x_0,  \bar\r}$ and $r_1\in A^{\m(\a)}_{x_1, \bar\r}$ forcing $x_0$ and $x_1$ to be incomparable in $\lusim{T}_{\m(\a)}$.
\end{itemize}
 Let $\bar\a=\Psi_{\vec{\mtcl E}}(\a)$. Then
 \begin{enumerate}
\item $r_0$ and $r_1$ are in $\dom(\Psi_{\vec{\mtcl E}})$, and
\item $\Psi_{\vec{\mtcl E}}(r_0)$ and $\Psi_{\vec{\mtcl E}}(r_1)$ force $x_0$ and $x_1$ to be incomparable in $\lusim{T}_{\m(\bar\a)}$.
\end{enumerate}
\end{lemma}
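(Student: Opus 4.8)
The plan is to exploit the correctness of the thread $\langle\vec{\mtcl E},(\r,\a)\rangle$ together with the definability of the forcing relations $\Vdash^{H(\k^+)}_{\MQB_{\m(\a)}}$ over the structures $(H(\k^+),\in,\Phi_{\a})$. First I would observe that since $\langle\vec{\mtcl E},(\r,\a)\rangle$ is correct, clause (3) in the definition of correct thread says precisely that $\Psi_{\vec{\mtcl E}}$ is a (partially defined) elementary embedding from $(N^0_0,\in,\Phi_\a)$ into $(N^{n-1}_1,\in,\Phi_{\bar\a})$, where $\bar\a=\Psi_{\vec{\mtcl E}}(\a)$. The key point is that the objects $A^{\m(\a)}_{x_0,\bar\r}$, $A^{\m(\a)}_{x_1,\bar\r}$, $r_0$, $r_1$ are all $F$-definable (hence $\Phi_\a$-definable, via Lemma \ref{definability}) from the parameters $\r$, $\a$, $x_0$, $x_1$, $\bar\r$, all of which lie in $N^0_0$: indeed $\r,\a\in N^0_0$ since $(\r,\a)\in\dom(\Psi_{\vec{\mtcl E}})\subseteq N^0_0$ by definition of thread, and $x_0,x_1,\bar\r$ lie below $\r$ so are in $N^0_0$ by the closure of $N^0_0$ under $\vec e$ together with $\delta_{N^0_0}$ being an ordinal. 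By elementarity of $\Psi_{\vec{\mtcl E}}$ with respect to $\Phi_\a$, these definable objects lie in $\dom(\Psi_{\vec{\mtcl E}})$, giving (1), and $\Psi_{\vec{\mtcl E}}$ sends them to the corresponding objects defined over $(H(\k^+),\in,\Phi_{\bar\a})$.

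Next I would identify what those images are. Since $\Psi_{\vec{\mtcl E}}$ fixes the ordinals $\bar\r\le\r$ and the nodes $x_0=(\r_0,\zeta_0)$, $x_1=(\r_1,\zeta_1)$ (these are hereditarily of size $<\delta_{N^0_0}$, or one appeals to Fact \ref{agreement} applied to the relevant sequence of edges to see the composite $\Psi_{\vec{\mtcl E}}$ is the identity on $\dom(\Psi_{\vec{\mtcl E}})\cap N^{n-1}_1$ for the ordinal and node parameters), the image of $A^{\m(\a)}_{x_0,\bar\r}$ under $\Psi_{\vec{\mtcl E}}$ is the $F$-first maximal antichain of $\MQB_{\m(\bar\a)}$ in $H(\k^+)$ deciding, for some $\bar\zeta<\o_1$, that $(\bar\r,\bar\zeta)$ is $\lusim{T}_{\m(\bar\a)}$-below $x_0$; that is, $\Psi_{\vec{\mtcl E}}(A^{\m(\a)}_{x_0,\bar\r})=A^{\m(\bar\a)}_{x_0,\bar\r}$, and likewise $\Psi_{\vec{\mtcl E}}(A^{\m(\a)}_{x_1,\bar\r})=A^{\m(\bar\a)}_{x_1,\bar\r}$ and $\Psi_{\vec{\mtcl E}}(r_i)\in A^{\m(\bar\a)}_{x_i,\bar\r}$ for $i=0,1$. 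Here one uses that $\m(\cdot)$ commutes with $\Psi_{\vec{\mtcl E}}$ on the relevant ordinals (since $\Psi_{\vec{\mtcl E}}$ is an $\in$-isomorphism and multiples of $\o_1$ are definable), and that $\Psi_{\vec{\mtcl E}}$ respects the forcing relation $\Vdash^{H(\k^+)}_{\MQB_{\m(\a)}}$ because this relation is coded into $\Phi_{\m(\a)}$ and hence (by the uniformity and the fact that $\m(\a)\le\a$) is $\Phi_\a$-definable.

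Finally I would transfer the incomparability statement. By hypothesis there are $\bar\zeta_0\neq\bar\zeta_1$ in $\o_1$ such that $r_0$ forces $(\bar\r,\bar\zeta_0)<_{\lusim T_{\m(\a)}}x_0$ and $r_1$ forces $(\bar\r,\bar\zeta_1)<_{\lusim T_{\m(\a)}}x_1$. These are statements of the form $\Vdash^{H(\k^+)}_{\MQB_{\m(\a)}}\varphi$ with $\MQB_{\m(\a)}$-names in $H(\k^+)$ (namely $\lusim T_{\m(\a)}$) and ordinal/node parameters; applying elementarity of $\Psi_{\vec{\mtcl E}}$ and the fact that it fixes $\bar\r,\bar\zeta_0,\bar\zeta_1,x_0,x_1$ while sending $\m(\a)$ to $\m(\bar\a)$, $\lusim T_{\m(\a)}$ to $\lusim T_{\m(\bar\a)}$, and $r_i$ to $\Psi_{\vec{\mtcl E}}(r_i)$, we conclude that $\Psi_{\vec{\mtcl E}}(r_0)$ forces $(\bar\r,\bar\zeta_0)<_{\lusim T_{\m(\bar\a)}}x_0$ and $\Psi_{\vec{\mtcl E}}(r_1)$ forces $(\bar\r,\bar\zeta_1)<_{\lusim T_{\m(\bar\a)}}x_1$, with $\bar\zeta_0\neq\bar\zeta_1$; since $\{\bar\r\}\times\o_1$ is forced to be the $\bar\r$-th level of $\lusim T_{\m(\bar\a)}$, this is exactly the assertion that $\Psi_{\vec{\mtcl E}}(r_0)$ and $\Psi_{\vec{\mtcl E}}(r_1)$ force $x_0$ and $x_1$ to be incomparable in $\lusim T_{\m(\bar\a)}$, which is (2). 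The main obstacle is the bookkeeping around which predicate $\Phi_\beta$ is being used to witness elementarity: one must check carefully that all the relevant objects ($F$, the forcing relations, the names $\lusim T_{\m(\a)}$, the function $\m$) are definable over $(H(\k^+),\in,\Phi_\a)$ rather than only over $(H(\k^+),\in,\Phi_{\m(\a)})$ or $(H(\k^+),\in,\Phi_0)$ — this is where Lemma \ref{definability}, the uniformity of the definition of $\langle\Phi_\beta\rangle$, and the inequality $\m(\a)\le\a$ together with clause (2) in the definition of edge are doing the real work, and the correctness clauses (1) and (2) of the thread (namely $\a<\g^0_0$ and $\Psi_{\vec{\mtcl E}}(\a)\in\g^{n-1}_1$) are needed to guarantee that clause (3) actually applies at the parameter $\a$.
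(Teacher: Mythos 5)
Your overall strategy coincides with the paper's: use the fact that a correct thread makes $\Psi_{\vec{\mtcl E}}$ a partially defined elementary embedding from $(N^0_0,\in,\Phi_\a)$ into $(N^{n-1}_1,\in,\Phi_{\bar\a})$, note that $\Psi_{\vec{\mtcl E}}$ fixes $\bar\r$, $x_0$, $x_1$ and all countable ordinals, use the uniform definability of the antichains to conclude $\Psi_{\vec{\mtcl E}}(A^{\m(\a)}_{x_i,\bar\r})=A^{\m(\bar\a)}_{x_i,\bar\r}$, and then transfer the two forcing statements via Lemma \ref{definability}. That part of your argument is exactly what the paper does.

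However, your justification of conclusion (1) has a genuine flaw: you claim that $r_0$ and $r_1$ are definable from the parameters $\r,\a,x_0,x_1,\bar\r$ and therefore lie in $\dom(\Psi_{\vec{\mtcl E}})$. They are not: $r_0$ is merely \emph{some} member of $A^{\m(\a)}_{x_0,\bar\r}$ which $q\restriction\m(\a)$ happens to extend, and the only way to single it out is as the unique member of that antichain extended by $q\restriction\m(\a)$ --- a definition that needs $q\restriction\m(\a)$ as a parameter, which is not available in $N^0_0$ (indeed $(N^0_0,\g^0_0)$ occurs in an edge of $\tau_q$, so $q\notin N^0_0$). The paper closes this step differently: having shown $A^{\m(\a)}_{x_i,\bar\r}\in\dom(\Psi_{\vec{\mtcl E}})$ by the definability argument, it uses $|A^{\m(\a)}_{x_i,\bar\r}|<\k$ together with the fact that every model $N$ occurring in the thread satisfies $N\cap\k\in\k$ and $(N,\in,\Phi_0)\prec(H(\k^+),\in,\Phi_0)$, so that any member of such an $N$ of cardinality less than $\k$ is in fact a subset of $N$ (a surjection onto it from its cardinality, an ordinal below $\d_N$, belongs to $N$); applying this along the thread shows that the antichains are subsets of $\dom(\Psi_{\vec{\mtcl E}})$, whence $r_0,r_1\in\dom(\Psi_{\vec{\mtcl E}})$, which is conclusion (1). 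With that replacement your transfer of the incomparability statement goes through as in the paper.
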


\begin{proof}
Let $$\vec{\mtcl E}=(\langle (N^i_0, \g^i_0), (N^i_1, \g^i_1)\rangle\,\mid\, i\leq n)$$ Since $\o_1\cup \r+1\subseteq \dom(\Psi_{\vec{\mtcl E}})$ and $\Psi_{\vec{\mtcl E}}$ is a partially defined elementary embedding from $(N^0_0, \in, \Phi_\a)$ into $(N^n_1, \in, \Phi_{\bar\a})$ we have, by definability of $A^{\m(\a)}_{x_0,  \bar\r}$ and $A^{\m(\a)}_{x_1,  \bar\r}$ over the structure $(H(\k^+), \in, \Phi_\a)$ by formulas $\varphi_0$ and $\varphi_1$, respectively, with $x_0$, $x_1$ and $\bar\r$
as parameters, and defi\-na\-bi\-lity of $A^{\m(\bar\a)}_{x_0,  \bar\r}$ and $A^{\m(\bar\a)}_{x_1,  \bar\r}$ over $(H(\k^+), \in, \Phi_{\bar\a})$, also by $\varphi_0$ and $\varphi_1$, respectively, that

\begin{itemize}
\item
$A^{\m(\a)}_{x_0,  \bar\r}$, $A^{\m(\a)}_{x_1, \bar\r} \in\dom(\Psi_{\vec{\mtcl E}})$,
\item $A^{\m(\bar\a)}_{x_0, \bar\r}=\Psi_{\vec{\mtcl E}}(A^{\m(\a)}_{x_0,  \bar\r})$, and
\item $A^{\m(\bar\a)}_{x_1, \bar\r}=\Psi_{\vec{\mtcl E}}(A^{\m(\a)}_{x_1,  \bar\r})$.
\end{itemize}

Again by a definability argument, since $|A^{\m(\a)}_{x_0, \bar\r}|$, $|A^{\m(\a)}_{x_1, \bar\r}|<\kappa$, we also have that  $A^{\m(\a)}_{x_0, \bar\r}$ and $A^{\m(\a)}_{x_1, \bar\r}$ are both subsets of $\dom(\Psi_{\vec{\mtcl E}})$. Finally, we have $\bar\zeta_0\neq\bar\zeta_1$ in $\o_1$ such that
\begin{itemize}
\item $r_0$ forces in $\MQB_{\m(\a)}$ that $(\bar\r, \bar\zeta_0)$ is below $x_0$ in $\lusim{T}_{\m(\a)}$ and
\item $r_1$ forces in $\MQB_{\m(\a)}$ that $(\bar\r, \bar\zeta_1)$ is below $x_1$ in $\lusim{T}_{\m(\a)}$.
\end{itemize}

\noindent But since $\Psi_{\vec{\mtcl E}}$ is a partial elementary embedding  from $(N^0_0, \in, \Phi_\a)$ into $(N^n_1, \in,  \Phi_{\bar\a})$, by Lemma \ref{definability} we have that
\begin{itemize}
\item $\Psi_{\vec{\mtcl E}}(r_0)$  forces in $\MQB_{\m(\bar\a)}$ that $(\bar\r, \bar\zeta_0)$ is below $x_0$ in $\lusim{T}_{\m(\bar\a)}$ and that
\item $\Psi_{\vec{\mtcl E}}(r_1)$ forces in $\MQB_{\m(\bar\a)}$ that $(\bar\r, \bar\zeta_1)$ is below $x_1$ in $\lusim{T}_{\m(\bar\a)}$.
\end{itemize}
\end{proof}

Given functions $f$ and $g$, let us momentarily denote by $f + g$ the function with $\dom(f + g)=\dom(f)\cup \dom(g)$ defined by letting $$(f+g)(x)=f(x)\cup g(x)$$ for all $x\in\dom(f)\cup\dom(g)$.\footnote{Where, we recall, if $h$ is a function and $x\notin \dom(h)$, we are setting $h(x)$ to be $\emptyset$.}

Given $\MQB_{\k^+}$-conditions $q_0$ and $q_1$, let $q_0\oplus q_1$ denote the natural amalgamation of $q_0$ and $q_1$; to be more specific, $q_0\oplus q_1$ is the ordered pair $(f, \tau_{q_0}\oplus \tau_{q_1})$, where $f$ is the closure of $f_{q_0}+f_{q_1}$ with respect to relevant (restrictions of) functions of the form $\Psi_{N_0, N_1}$, for
edges $\langle (N_0, \g_0), (N_1, \g_1)\rangle$ in $\tau_{q_0}\oplus\tau_{q_1}$, so that clause (6) in the definition of condition holds for $q_0\oplus q_1$. Even more precisely, we define $$q_0\oplus q_1=((f_{q_0}+f_{q_1})+f, \tau_{q_0}\oplus\tau_{q_1}),$$ where $f$ is the function with domain $X$---for
$X$ being the collection of all ordinals of the form $\Psi_{\vec{\mtcl E}}(\a)$, for a
connected $\tau_{q_0}\oplus\tau_{q_1}$-thread $\langle \vec{\mtcl E}, (\r, \a)\rangle$ such that $\vec{\mtcl E}$ consists of edges, and such that $(\r, \zeta)\in f_{q_0}(\a)\cup f_{q_1}(\a)$ for some $\zeta<\o_1$---and such that for every $\bar\a\in X$, $f(\bar\a)$ is the collection of all nodes $(\r, \zeta)$, for
connected $\tau_{q_0}\oplus\tau_{q_1}$-threads $\langle \vec{\mtcl E}, (\r, \a)\rangle$ such that
\begin{enumerate}
\item $\vec{\mtcl E}$ consists of edges,
\item $(\r, \zeta)\in f_{q_0}(\a)\cup f_{q_1}(\a)$, and
\item $\Psi_{\vec{\mtcl E}}(\a)=\bar\a$.
\end{enumerate}

Lemma \ref{closure-sym} holds by the construction of $q_0\oplus q_1$.

\begin{lemma}\label{closure-sym} Let $q_0$ and $q_1$ be $\MQB_{\k^+}$-conditions and let $q=q_0\oplus q_1$. Then the following holds.

\begin{enumerate}
\item $\tau_q$ is closed under copying.

\item For every edge $\langle (N_0, \g_0), (N_1, \g_1)\rangle\in \tau_q$ and every $\a\in N_0\cap\g_0$ such that $\bar\a:=\Psi_{N_0, N_1}(\a)<\g_1$, if $\a\neq 0$, $\a\in\dom(f_q)$, and $x\in f_q(\a)\cap N_0$, then
\begin{enumerate}
\item $\bar\a\in\dom(f_q)$, and
\item $x \in f_q(\bar\a)$.
 \end{enumerate}
\end{enumerate}
\end{lemma}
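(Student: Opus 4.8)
The statement I need to prove is Lemma \ref{closure-sym}: that $q = q_0 \oplus q_1$ satisfies clauses (5) and (6) of the definition of condition, namely that $\tau_q$ is closed under copying, and that the "copying into the past" requirement holds for every edge and every node living in a model along that edge. The key observation is that the definition of $q_0 \oplus q_1$ was engineered precisely to make these two clauses automatic: $\tau_q = \tau_{q_0} \oplus \tau_{q_1}$ is built as $\bigcup_n \tau_n$ where each $\tau_{n+1}$ absorbs all edges of the form $\Psi_e(e')$, and $f_q$ is built as $(f_{q_0} + f_{q_1}) + f$ where $f$ is the closure of $f_{q_0}+f_{q_1}$ under all the images $\Psi_{\vec{\mtcl E}}$ along connected threads made of edges. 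So the proof is really a matter of carefully unwinding these two definitions and checking that nothing was left out.

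**Clause (1): $\tau_q$ is closed under copying.** This I would get directly from the inductive construction $\tau_q = \bigcup_{n<\o}\tau_n$. Suppose $e = \langle (N_0,\g_0),(N_1,\g_1)\rangle$ and $e' = \langle (N_0',\g_0'),(N_1',\g_1')\rangle$ are edges in $\tau_q$ with $e' \in N_0$. Then $e \in \tau_n$ and $e' \in \tau_m$ for some $n,m < \o$; taking $k = \max\{n,m\}$, both lie in $\tau_k$, and hence $\Psi_e(e') \in \tau_{k+1} \subseteq \tau_q$. By the definition of $\Psi_e(e')$, its markers are exactly $\p^{\g_0', N_0'}_{N_0,\g_0,N_1,\g_1}$ and $\p^{\g_1', N_1'}_{N_0,\g_0,N_1,\g_1}$, so the witnesses $\g_0^* = \p^{\g_0', N_0'}_{N_0,\g_0,N_1,\g_1}$ and $\g_1^* = \p^{\g_1', N_1'}_{N_0,\g_0,N_1,\g_1}$ required by the definition of "closed under copying" are exactly what $\Psi_e(e')$ provides. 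One small point to verify is that $\Psi_e(e')$ is genuinely an edge (not merely a generalized edge), i.e. that the orientation is right; this follows from $e,e'$ being edges, $N_1$ being a projection of $N_0$ and $N_1'$ a projection of $N_0'$, so that $\Psi_{N_0,N_1}(N_1')$ is a projection of $\Psi_{N_0,N_1}(N_0')$ — and the predicate-isomorphism clause (4) for $\Psi_e(e')$ transfers along $\Psi_{N_0,N_1}$ from the corresponding clause for $e'$.

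**Clause (2): copying of nodes into the past.** Fix an edge $\langle (N_0,\g_0),(N_1,\g_1)\rangle \in \tau_q$, an ordinal $\a \in N_0 \cap \g_0$ with $\bar\a := \Psi_{N_0,N_1}(\a) < \g_1$ and $\a \neq 0$, $\a \in \dom(f_q)$, and a node $x = (\r,\zeta) \in f_q(\a) \cap N_0$. Since $x \in f_q(\a)$, by the definition of $f_q = (f_{q_0}+f_{q_1})+f$ there is a connected $\tau_q$-thread $\langle \vec{\mtcl E}_0, (\r,\a')\rangle$ consisting of edges with $(\r,\zeta) \in f_{q_0}(\a') \cup f_{q_1}(\a')$ and $\Psi_{\vec{\mtcl E}_0}(\a') = \a$ (the base case $x \in (f_{q_0}+f_{q_1})(\a)$ being the trivial thread). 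Now I prepend the single edge $e := \langle (N_0,\g_0),(N_1,\g_1)\rangle$: the point is to check that $\langle e^\frown \vec{\mtcl E}_0^{\,-1}\!\ldots\rangle$ — more precisely, the thread obtained by composing $\Psi_{N_0,N_1}$ with (the reverse of) $\vec{\mtcl E}_0$ to land at $\bar\a$ — is again a connected $\tau_q$-thread of edges, using $x \in N_0$ (so $\a' \in N_0$ and the whole sub-thread, being an element of $N_0$ by elementarity, lies inside $N_0$ and is transported by $\Psi_{N_0,N_1}$), together with Fact \ref{agreement} to handle the bookkeeping of which ordinals are moved. The image ordinal $\bar{\a'} := (\Psi_{N_0,N_1}\text{-composed thread})(\a')$ then witnesses $\bar\a \in \dom(f_q)$, and $x \in f_q(\bar\a)$ because $x = (\r,\zeta)$ and the thread carries $(\r,\a') \mapsto (\r,\bar{\a'})$ while $(\r,\zeta)$ was already in $f_{q_0}(\a')\cup f_{q_1}(\a')$. (If $x$ entered $f_q$ via some thread but $x \notin f_{q_0}(\a')\cup f_{q_1}(\a')$ — which cannot happen, since $f$'s nodes are always pulled from $f_{q_0} \cup f_{q_1}$ at the source — this case is vacuous.)

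**Main obstacle.** The genuinely delicate point is the thread-surgery in clause (2): verifying that prepending $\Psi_{N_0,N_1}$ to the witnessing thread for $x$ again yields a \emph{connected} thread \emph{consisting of edges} (not just generalized edges), so that the resulting ordinal and node genuinely get put into $f$ by its defining formula. This requires that the sub-thread witnessing $x \in f_q(\a)$ can be chosen to lie entirely in $N_0$ — which follows because $x,\a \in N_0$, $(N_0,\in,\Phi_\a) \prec (H(\k^+),\in,\Phi_\a)$ for the relevant $\a$'s, and $\tau_q \cap N_0$ (suitably understood via $\Phi$) is definable over $N_0$ — and that connectedness is preserved under this concatenation, which is exactly the kind of marker-bookkeeping already isolated in the displayed list inside the proof of Lemma \ref{orbits} and in Fact \ref{agreement}. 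Everything else is bookkeeping that follows mechanically from the definitions of $\oplus$ on edge-sets and on conditions.
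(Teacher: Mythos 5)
The paper gives no written argument for this lemma (it is dismissed with ``holds by the construction of $q_0\oplus q_1$''), so the comparison is with the intended unwinding of the definitions, which is indeed what you attempt. Your treatment of clause (1) is correct and is exactly that unwinding: any two edges $e,e'\in\tau_{q_0}\oplus\tau_{q_1}$ with $e'\in N_0$ lie in a common stage $\tau_k$, so $\Psi_e(e')\in\tau_{k+1}$, and its markers are precisely the ordinals $\p^{\g_0',N_0'}_{N_0,\g_0,N_1,\g_1}$, $\p^{\g_1',N_1'}_{N_0,\g_0,N_1,\g_1}$ demanded by closure under copying.

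For clause (2), however, your justification contains a genuine flaw, and it is exactly the step you flag as the ``main obstacle''. You claim that since $x\in N_0$, the source stage $\a'$ lies in $N_0$ and the witnessing thread $\langle\vec{\mtcl E}_0,(\r,\a')\rangle$ ``can be chosen to lie entirely in $N_0$'' by elementarity and definability of $\tau_q\cap N_0$. This is false and cannot be repaired along those lines: the witnessing thread consists of specific edges of $\tau_{q_0}\oplus\tau_{q_1}$ and the defining clause of $f$ refers to the specific working parts $f_{q_0},f_{q_1}$; none of these objects are elements of, or definable over, $(N_0,\in,\Phi_\a)$, and in general $\a'\notin N_0$ (the thread may involve models of height far above $\d_{N_0}$ and stages outside $N_0$). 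Any thread produced inside $N_0$ by reflection would not consist of edges of $\tau_q$ and so would not witness membership in $f$. Fortunately the hypothesis is also unnecessary, and the correct argument is simpler than what you describe (your ``prepend''/``reverse of $\vec{\mtcl E}_0$'' phrasing also points in the wrong direction): one simply \emph{appends} $e$ at the tail, i.e.\ passes to $\langle\vec{\mtcl E}_0{}^{\frown}(e),(\r,\a')\rangle$ (in the base case $x\in f_{q_0}(\a)\cup f_{q_1}(\a)$ one uses the one-edge thread $\langle(e),(\r,\a)\rangle$). Threadness of the extension needs only that $\Psi_{\vec{\mtcl E}_0}((\r,\a'))=(\r,\a)\in N_0$, which holds because $\r,\a\in N_0$ and the isomorphisms along the thread fix $\r$ (twin models have equal heights and the isomorphisms are the identity on ordinals below them, and $\r<\d_{N^0_0}$); connectedness of the extension needs only $\a<\g_0$ and $\bar\a=\Psi_{N_0,N_1}(\a)<\g_1$, which are exactly the hypotheses of clause (2). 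The defining clause of $f$ then puts $x$ into $f(\bar\a)\subseteq f_q(\bar\a)$, with no appeal to Fact \ref{agreement}, to elementarity of $N_0$, or to any transport of the sub-thread by $\Psi_{N_0,N_1}$.
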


The following lemma is a trivial consequence of Lemmas \ref{high-amalg} and \ref{orbits}.

\begin{lemma}\label{simplifying-amalg} For every two $\MQB_{\k^+}$-conditions $q_0$ and $q_1$, if $q_0\oplus q_1=(f, \tau)$, then for every $\a\in\dom(f)$ and every $x=(\r, \zeta)\in f(\a)$ such that $x\notin f_{q_0}(\a)\cup f_{q_1}(\a)$ there is some $\a^*\in \dom(f_{q_0})\cup\dom(f_{q_1})$ such that $x\in f_{q_0}(\a^*)\cup f_{q_1}(\a^*)$ and some connected $\tau_{q_0}\cup\tau_{q_1}$-thread $\langle\vec{\mtcl E}, (\r, \a^*)\rangle$ such that $\Psi_{\vec{\mtcl E}}(\a^*)=\a$. Furthermore, if $\l<\k$ is such that all edges in $\tau_{q_0}$ involve models of height less than $\l$, then all members of $\vec{\mtcl E}$ involving models of height at least $\l$ are edges in $\tau_{q_1}$.
\end{lemma}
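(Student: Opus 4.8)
The statement is, as the paper indicates, an immediate distillation of Lemma \ref{high-amalg} and Lemma \ref{orbits} together with the explicit construction of $q_0\oplus q_1$, so the plan is to simply unwind these. First I would recall that by definition $q_0\oplus q_1 = ((f_{q_0}+f_{q_1})+f, \tau_{q_0}\oplus\tau_{q_1})$, where for each $\bar\a$ in the domain of $f$, the value $f(\bar\a)$ consists exactly of those nodes $(\r,\zeta)$ for which there is a connected $\tau_{q_0}\oplus\tau_{q_1}$-thread $\langle\vec{\mtcl E},(\r,\a)\rangle$ with $\vec{\mtcl E}$ consisting of edges, $(\r,\zeta)\in f_{q_0}(\a)\cup f_{q_1}(\a)$, and $\Psi_{\vec{\mtcl E}}(\a)=\bar\a$. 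Hence, given $\a\in\dom(f)$ (where I read $q_0\oplus q_1=(\tau,f)$ with $f$ standing for the full working part and $\tau=\tau_{q_0}\oplus\tau_{q_1}$) and a node $x=(\r,\zeta)\in f(\a)$ with $x\notin f_{q_0}(\a)\cup f_{q_1}(\a)$, the defining property yields immediately some $\a^*$ with $(\r,\zeta)\in f_{q_0}(\a^*)\cup f_{q_1}(\a^*)$ and a connected $\tau_{q_0}\oplus\tau_{q_1}$-thread $\langle\vec{\mtcl E}',(\r,\a^*)\rangle$, consisting of edges, with $\Psi_{\vec{\mtcl E}'}(\a^*)=\a$. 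Note $\a^*\in\dom(f_{q_0})\cup\dom(f_{q_1})$ since $f_{q_0}(\a^*)\cup f_{q_1}(\a^*)\ne\emptyset$.

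The point of the lemma is that one can pull $\vec{\mtcl E}'$ back to a thread in $\tau_{q_0}\cup\tau_{q_1}$ rather than merely in $\tau_{q_0}\oplus\tau_{q_1}$, preserving connectedness and the image. This is precisely what Lemma \ref{orbits} provides: applying it to the sets $\tau^0=\tau_{q_0}$, $\tau^1=\tau_{q_1}$, the set $x=(\r,\a^*)$, and the $\tau^0\oplus\tau^1$-thread $\langle\vec{\mtcl E}',(\r,\a^*)\rangle$, I obtain a $\tau_{q_0}\cup\tau_{q_1}$-thread $\langle\vec{\mtcl E},(\r,\a^*)\rangle$ with $\Psi_{\vec{\mtcl E}}(\r,\a^*)=\Psi_{\vec{\mtcl E}'}(\r,\a^*)$; in particular $\Psi_{\vec{\mtcl E}}(\a^*)=\Psi_{\vec{\mtcl E}'}(\a^*)=\a$. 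Since $x=(\r,\a^*)$ has first coordinate $\r<\k$ (it is a node coordinate) and $\a^*<\k^+$, so that $x\in H(\k^+)\times\k^+$, and since $\langle\vec{\mtcl E}',(\r,\a^*)\rangle$ is connected and consists of edges, clause (1) of Lemma \ref{orbits} lets me take $\vec{\mtcl E}$ connected as well; one still has to observe that "consisting of edges" is preserved, which is visible from the construction of $\vec{\mtcl E}_\ast$ in the proof of Lemma \ref{orbits} (the new generalized edges $e_0^\ast,\ldots$ are obtained from genuine edges of $\tau^0\oplus\tau^1$, and since the thread is connected the orientations work out so that each stays an edge; alternatively one notes that a connected thread of edges always has all its members actual edges by the remark preceding the statement of Lemma \ref{orbits} combined with clause (4) of its internal construction). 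This gives the main assertion.

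For the "furthermore" clause, suppose $\l<\k$ is such that all edges in $\tau_{q_0}$ involve models of height less than $\l$. The amalgamation $\tau_{q_0}\oplus\tau_{q_1}$ is closed under copying by construction (clause (1) of Lemma \ref{closure-sym}), and in particular $\tau^1=\tau_{q_1}$ need not be assumed closed under copying — but in fact the hypothesis of clause (2) of Lemma \ref{orbits} asks exactly that $\tau^1$ be closed under copying, which is not literally the situation here. So instead I would invoke clause (2) of Lemma \ref{orbits} after first replacing $\tau^1$ by $\tau_{q_1}\oplus(\text{copies})$; more cleanly, I apply Lemma \ref{orbits}(2) with $\tau^0=\tau_{q_0}$ and with $\tau^1$ taken to be the closure-under-copying of $\tau_{q_1}$ inside $\tau_{q_0}\oplus\tau_{q_1}$, noting that by Lemma \ref{high-amalg} every member of $\tau_{q_0}\oplus\tau_{q_1}$ involving a model of height $\geq\l$ already lies in that closure of $\tau_{q_1}$, hence the thread $\vec{\mtcl E}'$ may be regarded as a $\tau^0\oplus\tau^1$-thread with this new $\tau^1$ which is closed under copying. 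Then clause (2) of Lemma \ref{orbits} directly yields that all members of $\vec{\mtcl E}$ involving models of height at least $\l$ are edges in $\tau^1$, and tracing the identification back these are edges in $\tau_{q_1}$ (up to the copying closure, which by Lemma \ref{high-amalg} consists, at heights $\geq\l$, of edges of $\tau_{q_1}$ itself). The only mildly delicate point — and the one I would be most careful about when writing this out — is this bookkeeping about which ambient set of edges plays the role of "$\tau^1$" when invoking Lemma \ref{orbits}, so that its closure-under-copying hypothesis is genuinely met while the conclusion still refers back to $\tau_{q_1}$; everything else is a direct substitution into the two cited lemmas.
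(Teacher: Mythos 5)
Your overall route is the paper's: Lemma \ref{simplifying-amalg} is presented there as a direct consequence of Lemmas \ref{high-amalg} and \ref{orbits} together with the definition of $q_0\oplus q_1$, and your unwinding of the definition of $f$ (producing $\a^*$ and a connected $\tau_{q_0}\oplus\tau_{q_1}$-thread consisting of edges) and your use of Lemma \ref{orbits}(1) for the main clause are fine. The detour you take for the ``furthermore'' clause, however, rests on a misreading: you worry that $\tau_{q_1}$ may fail to be closed under copying, but clause (5) of the definition of a $\MQB_\b$-condition states precisely that the side condition of any condition is closed under copying, so the hypotheses of Lemma \ref{orbits}(2) are met literally with $\tau^0=\tau_{q_0}$ and $\tau^1=\tau_{q_1}$ (the requirement that the original thread consist of edges holds because the threads used in the definition of the working part of $q_0\oplus q_1$ consist of edges by definition). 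A single application of Lemma \ref{orbits} then yields one thread $\vec{\mtcl E}$ witnessing both clauses at once.

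As written, your workaround is not merely superfluous but does not quite deliver the statement: applying Lemma \ref{orbits} with $\tau^1$ taken to be the copying-closure of $\tau_{q_1}$ inside $\tau_{q_0}\oplus\tau_{q_1}$ produces a $\tau_{q_0}\cup\tau^1$-thread, whose members of height below $\l$ may lie in that closure without lying in $\tau_{q_0}\cup\tau_{q_1}$; so this second thread need not witness the main clause, whereas the lemma asks for a single $\vec{\mtcl E}$ satisfying both assertions, and your argument ends up using two different threads for the two clauses. Your parenthetical claim that the replacement thread still consists of edges is also false in general --- the construction in the proof of Lemma \ref{orbits} deliberately inserts anti--edges (the paper itself points this out in the proof of Claim \ref{separating-pair-compatibility}) --- but this is harmless, since the statement only constrains the members of $\vec{\mtcl E}$ involving models of height at least $\l$. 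Both issues disappear once you invoke clause (5) of the definition of condition.
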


Extending our notation $f+g$ for functions $f$, $g$, if $\mtcl F$ is a set of functions, we denote by $\bigoplus\mtcl F$ the function $g$ with domain $$\bigcup\{\dom(f)\mid f\in\mtcl F\}$$ given by $$g(x)=\bigcup\{f(x)\mid x\in\dom(f)\}.$$

The following  lemma will be used in the proof of Lemma \ref{chain condition theorem}.

\begin{lemma}
\label{pre-separating-pair-compatibility}
Let $\b\leq\k^+$, and suppose $q_0$, $q_1 \in \MQB_{\b}$ are such that for every $\alpha<\b$, if $$(q_0\restriction\a)\oplus (q_1\restriction\a)\in\MQB_{\a},$$ then $$(q_0\restriction\a+1)\oplus (q_1\restriction\a+1)\in \MQB_{\a+1}.$$ Then $q_0\oplus q_1\in \MQB_{\b}$.
\end{lemma}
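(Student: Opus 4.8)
The plan is to prove, by induction on $\alpha\leq\beta$, that $(q_0\restriction\alpha)\oplus(q_1\restriction\alpha)\in\MQB_\alpha$; since $q_0\restriction\beta=q_0$ and $q_1\restriction\beta=q_1$, the instance $\alpha=\beta$ is exactly the lemma. The base case $\alpha=0$ is immediate: at stage $0$ every non-vacuous clause in the definition of condition is empty, and $\tau_{q_0\restriction 0}\oplus\tau_{q_1\restriction 0}$ is by construction a countable set of edges below $0$ closed under copying. In the successor case $\alpha=\gamma+1$ (so $\gamma<\beta$), the hypothesis of the lemma does all the work: by the induction hypothesis $(q_0\restriction\gamma)\oplus(q_1\restriction\gamma)\in\MQB_\gamma$, and since $\gamma<\beta$ the hypothesis then yields $(q_0\restriction\gamma+1)\oplus(q_1\restriction\gamma+1)\in\MQB_{\gamma+1}$, as desired; note that this case requires nothing beyond the hypothesis. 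Hence the entire content of the argument lies in the limit case.

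Assume then that $\alpha$ is a limit ordinal and put $p=(q_0\restriction\alpha)\oplus(q_1\restriction\alpha)$. Note that $q_0\restriction\alpha$ and $q_1\restriction\alpha$ are themselves conditions in $\MQB_\alpha$, so $p$ is defined and, by Lemma~\ref{closure-sym}, already satisfies clauses (5) and (6) in the definition of condition. Clauses (1) and (2)---that $f_p$ is a countable function with $\dom(f_p)\subseteq\alpha$ and that $\tau_p$ is a countable set of edges below $\alpha$---I would read off the explicit description of $\oplus$: writing $\tau_p=\bigcup_{n<\omega}\tau_n$ as in the definition, each $\tau_n$ is countable (at stage $n+1$ only the countably many pairs consisting of an edge $e\in\tau_n$ and an edge $e'\in\tau_n$ lying in the first model of $e$ enter), each member of $\tau_p$ is in fact an edge (the twin-model and projection clauses are preserved by the maps $\Psi_{N_0,N_1}$, and the predicate clauses (2) and (4) in the definition of edge hold by the choice of minimal markers together with the elementarity of $\Psi_{N_0,N_1}$ for the relevant structures $(N_0,\in,\Phi_{\alpha'})$), and, since the connected threads used in the definition of the working part of $\oplus$ consist of edges and hence involve only order-decreasing maps, all ordinals added to $\dom(f_p)$ lie below $\alpha$ and $\tau_p$ consists of edges below $\alpha$; the same bookkeeping shows that $f_p$ takes countable values in $\kappa\times\omega_1$.

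It remains to verify clause (3): $p\restriction\gamma\in\MQB_\gamma$ for every $\gamma<\alpha$. Here I would use that $\oplus$ commutes with restriction at stages below $\alpha$, i.e.\ that $$p\restriction\gamma=(q_0\restriction\gamma)\oplus(q_1\restriction\gamma),$$ which by the induction hypothesis is a condition in $\MQB_\gamma$. Granting clause (3), clauses (4) and (7) for $p$ follow as well: each instance of clause (4) with a parameter $\delta<\alpha$ mentions only $f_p(\delta)$ and $p\restriction\mu(\delta)$, and each instance of clause (7) with a parameter $\alpha'<\alpha$ mentions only objects determined by $p\restriction\alpha'+1$; since $p\restriction\delta+1$ and $p\restriction\alpha'+1$ are conditions by clause (3), they satisfy the corresponding instances, and these are precisely the instances in question for $p$.

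The main obstacle is thus the commutation identity, which I expect to be the only step requiring genuine care. To prove it one first shows that $$\bigl((\tau_{q_0}\restriction\alpha)\oplus(\tau_{q_1}\restriction\alpha)\bigr)\restriction\gamma=(\tau_{q_0}\restriction\gamma)\oplus(\tau_{q_1}\restriction\gamma).$$ The inclusion $\supseteq$ goes by induction on the $(\tau_{q_0}\restriction\gamma,\tau_{q_1}\restriction\gamma)$-rank, using the observation already made in the proof of Lemma~\ref{compl} that $\pi^{\gamma_0',N_0'}_{N_0,\min\{\gamma,\gamma_0\}_{N_0},N_1,\min\{\gamma,\gamma_1\}_{N_1}}=\pi^{\gamma_0',N_0'}_{N_0,\gamma_0,N_1,\gamma_1}$ whenever $\gamma_0',\gamma_1'\leq\gamma$, and likewise for the second marker; the inclusion $\subseteq$ additionally uses that capping the markers of an edge $e$ at $\gamma$ leaves unchanged the copies $\Psi_e(e')$ of edges $e'$ in the first model of $e$ whose markers are already below $\gamma$, and turns copies of edges with larger markers into copies of the corresponding restrictions. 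One then checks, for the working parts, that a connected $\tau_{q_0}\oplus\tau_{q_1}$-thread $\langle\vec{\mtcl E},(\rho,\alpha')\rangle$ consisting of edges with $\alpha'<\gamma$ and $\Psi_{\vec{\mtcl E}}(\alpha')<\gamma$ restricts to---and conversely arises from---a connected $(\tau_{q_0}\restriction\gamma)\oplus(\tau_{q_1}\restriction\gamma)$-thread with the same image, here using that $\Psi_{\vec{\mtcl E}}$ is order-decreasing so that all its intermediate images already lie below $\gamma$. These verifications are tedious but routine once the right statement about $\pi$ and restriction is isolated.
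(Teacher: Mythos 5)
Your induction scheme (successor case straight from the hypothesis, all the work at limits) is the same as the paper's, and your reduction of clauses (4) and (7) to clause (3), and of (5) and (6) to Lemma \ref{closure-sym}, is fine. The gap is exactly the step you isolate as the "only step requiring genuine care": the commutation identity $\bigl((q_0\restriction\alpha)\oplus(q_1\restriction\alpha)\bigr)\restriction\gamma=(q_0\restriction\gamma)\oplus(q_1\restriction\gamma)$ is false in general, not a routine verification. In your analysis of the working parts you only consider connected threads $\langle\vec{\mtcl E},(\rho,\alpha')\rangle$ whose source $\alpha'$ lies below $\gamma$, but the amalgam $p=(q_0\restriction\alpha)\oplus(q_1\restriction\alpha)$ also copies information \emph{from sources in $[\gamma,\alpha)$ down below $\gamma$}: since a projection $\Psi_{N_0,N_1}$ can send an ordinal $\alpha^*\geq\gamma$ to an ordinal $\bar\alpha<\gamma$, a node $x=(\rho,\zeta)\in f_{q_1}(\alpha^*)$ with $\gamma\leq\alpha^*<\alpha$, together with an edge $\langle(N_0,\gamma_0),(N_1,\gamma_1)\rangle\in\tau_{q_0}$ with $\rho,\alpha^*\in N_0$, $\alpha^*<\gamma_0$ and $\bar\alpha:=\Psi_{N_0,N_1}(\alpha^*)<\min\{\gamma,\gamma_1\}$, puts $x$ into $f_p(\bar\alpha)$ and hence into $f_{p\restriction\gamma}(\bar\alpha)$; in $(q_0\restriction\gamma)\oplus(q_1\restriction\gamma)$ the stage $\alpha^*$ has simply been erased, so nothing forces $x$ to appear at $\bar\alpha$ there. (The edge sets have the same defect: capping markers at $\gamma$ before copying can produce strictly smaller markers than copying and then capping, again because $\Psi_{N_0,N_1}$ can map ordinals in $[\gamma,\gamma_0')$ below $\gamma$.) Thus $p\restriction\gamma$ is in general strictly stronger than $(q_0\restriction\gamma)\oplus(q_1\restriction\gamma)$, and clause (3) cannot be obtained by invoking the induction hypothesis at $\gamma$ alone.

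The repair, and what the paper actually does, is to prove instead that $p\restriction\gamma$ is the \emph{greatest lower bound} of the family $\{((q_0\restriction\alpha')\oplus(q_1\restriction\alpha'))\restriction\gamma\mid\gamma\leq\alpha'<\alpha\}$: as $\alpha'$ increases these restricted amalgams form a decreasing family of $\MQB_\gamma$-conditions (each is a restriction of a condition, by your induction hypothesis at $\alpha'$ together with clause (3)); since the working parts and edge sets involved are countable, the family has a countable decreasing cofinal subsequence, and by (the proof of) Lemma \ref{countable closure} the greatest lower bound of such a sequence is again a $\MQB_\gamma$-condition; one then checks that this greatest lower bound is precisely $p\restriction\gamma$, which yields clause (3). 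With that substitution in place of your commutation identity, the rest of your argument goes through.
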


\begin{proof}
The proof is by induction on $\b$. We only need to argue for the conclusion in the case that $\b$ is a nonzero limit ordinal. In that case the conclusion follows easily from the induction hypothesis and the fact that for every $\a<\b$, $$f_{q_0\oplus q_1}\restriction\a=\bigoplus\{f_{(q_0\restriction\a')\oplus (q_1\restriction\a')}\restriction\a\mid \a\leq\a'<\b\}.$$ To see this equality it suffices to note that for every $\bar\a\in \dom(f_{q_0\oplus q_1})$,  any given $(\rho, \zeta)$ in $$f_{q_0\oplus q_1}(\bar\a)\setminus (f_{q_0}(\bar\a)\cup f_{q_1}(\bar\a))$$ has arrived there, thanks to Lemma \ref{simplifying-amalg}, by virtue of some connected $\tau_{q_0\restriction\a'}\cup\tau_{q_1\restriction\a'}$-thread $\langle\vec{\mtcl E}, (\rho, \a^*)\rangle$ for some high enough $\a'<\b$.

If $q_0\oplus q_1$ were not a $\MQB_\b$-condition, there would be some $\a<\b$ such that some finite piece of information contained in $f_{q_0\oplus q_1}\restriction\a$ fails to satisfy clause (4), (6) or (7)  in the definition of $\MQB_\a$-condition. But that piece of information would occur in $f_{(q_0\restriction\a')\oplus (q_1\restriction\a')}\restriction\a$ for a high enough $\a'<\b$ above $\a$. Hence, by taking $\a'$ high enough we may guarantee that the fact that the piece of information violates some clause in the definition of $\MQB_\a$-condition entails that $$(f_{(q_0\restriction\a')\oplus (q_1\restriction\a')}\restriction\a, \tau_{(q_0\restriction\a')\oplus (q_1\restriction\a')}\restriction\a)$$ is not a $\MQB_\a$-condition. But that contradicts $(q_0\restriction\a')\oplus (q_1\restriction\a')\in\MQB_{\a'}$, which we know is true by induction hypothesis.
\end{proof}
We will now introduce the notion of adequate condition, which we already alluded to at the beginning of this section.



 \begin{definition}\label{adequate-condition}
Given $\b<\k^+$ and $q\in \MQB_{\b}$, we will say that $q$ is \emph{adequate} in case (1) and (2) below hold.

\begin{enumerate}
\item For every $\a\in\dom(f_q)$ such that $\MQB_{\m(\a)}$ has the $\k$-c.c.\ and $\lusim{T}_{\m(\a)}$ is defined and for all distinct $x_0=(\r_0, \zeta_0)$, $x_1=(\r_1, \zeta_1)\in f_q(\a)$ there is some $\bar\r\leq\r_0$, $\r_1$, together with conditions $r_0\in A^{\m(\a)}_{x_0, \bar\r}$ and $r_1\in A^{\m(\a)}_{x_1, \bar\r}$ weaker than $q\restriction\m(\a)$ and such that $r_0$ and $r_1$ force $x_0$ and $x_1$ to be incomparable in $\lusim{T}_{\m(\a)}$.
\item The following holds for every correct $\tau_q$-thread $\langle \vec{\mtcl E}, (\r, \bar\alpha)\rangle$, where
$\a=\Psi_{\vec{\mtcl E}}(\bar\a)$, $\rho<\k$, $\zeta<\o_1$, and $x=(\r, \zeta)\in f_q(\bar\a)$.

\begin{enumerate}
\item $\a\in\dom(f_q)$ and $x \in f_q(\a)$.

 \item For every $\langle (N_0, \g_0), (N_1, \g_1)\rangle\in\tau_q\cap \dom(\vec{\mtcl E})$ with $\g_0$, $\g_1\leq\bar\a$ there are $\g_0'\geq\Psi_{\vec{\mtcl E}}(\g_0)$ and $\g_1'\geq\Psi_{\vec{\mtcl E}}(\g_1)$ such that $$\langle (\Psi_{\vec{\mtcl E}}(N_0), \g_0'), (\Psi_{\vec{\mtcl E}}(N_1), \g_1')\rangle\in\tau_q$$
  \end{enumerate}
    \end{enumerate}
 \end{definition}


 We call a condition \emph{weakly adequate} if it satisfies clause (1) from Definition \ref{adequate-condition}.

 \begin{lemma}\label{weak-adeq}
 The set of weakly adequate conditions is dense in $\MQB_\b$ for each $\b\leq\k^+$.
  \end{lemma}
  \begin{proof}
  For each condition $q \in \MQB_\b$ let $\langle (\a^q_n, x^q_{0, n}, x^q_{1, n}): n < \omega            \rangle$ be the $F$-least enumeration of all triples $(\a, x_0, x_1)$ such that $\a \in \dom(f_{q})$, $\a\neq 0$, is such that $\MQB_{\m(\a)}$ has the $\k$-c.c.\ and $\lusim{T}_{\m(\a)}$ is defined, and  $x_0=(\r_0, \zeta_0)$, $x_1=(\r_1, \zeta_1)\in f_{q_i}(\a)$ are distinct.
  Let also $\varphi: \omega \times \omega \to \omega$ be a bijection such that $\varphi(m, n) \geq m$ for all $m, n < \omega$.

  By induction on $i < \omega$ we define a decreasing sequence $\langle q_i: i<\omega \rangle$ of $\MQB_\b$-conditions as follows. To start, set $q_0=q$. Now suppose that $i< \o$ and that $q_{i}$ is defined. Let $m$, $n$ be such that $\varphi(m, n)=i$ and set
  $(\a, x_0, x_1)= (\a^{q_m}_{n}, x^{q_m}_{0, n}, x^{q_m}_{1, n})$.
  Let $q_{i+1}$ be an extension of $q_{i}$ such that there are $\bar\r\leq\r_0$, $\r_1$, together with conditions $r_0\in A^{\m(\a)}_{x_0, \bar\r}$ and $r_1\in A^{\m(\a)}_{x_1, \bar\r}$ weaker than $q_{i+1}\restriction\m(\a)$ and such that $r_0$ and $r_1$ force $x_0$ and $x_1$ to be incomparable in $\lusim{T}_{\m(\a)}$. Then the greatest lower bound of $\{q_i\,\mid\,i<\omega\}$ is weakly adequate.
\end{proof}

In fact, the set of adequate conditions is dense in $\MQB_\b$ for each $\b\leq\k^+$, as shown in Lemma \ref{adequate}.
To show this, we need the following lemma.

\begin{lemma}\label{str-props} Let $\a<\k^+$, $q\in \MQB_{\a+1}$,
 $e=\langle  (N_0, \a+1), (N_1, \g_1)\rangle$ a genera\-lized edge coming from $\tau_{q}$, and $\vec{\mtcl E}=(\langle (N^i_0, \g^i_0), (N^i_1, \g^i_1)\rangle\,\mid\,i\leq n)$ a sequence of generalized edges coming from $\tau_{q}$ with $\langle (N^{0}_0, \g^{0}_0), (N^0_1, \g^0_1)\rangle=e$ and such that $\langle\vec{\mtcl E}, (\emptyset, \a)\rangle$ is a correct thread. Let $\bar\a$ be such that $\Psi_{\vec{\mtcl E}}(\a)=\bar\a$. Let also $\d=\min\{\d_{N^i_0}\mid i\leq n\}$.
 Suppose that
  \begin{enumerate}
  \item $q\restriction \a$ is adequate,
  and that
  \item $q\restriction\m(\a)$ forces every two distinct nodes in $f_q(\bar\a)\cap (\d\times\o_1)$ to be incomparable in $\lusim{T}_{\m(\a)}$.
  \end{enumerate}
 Then there is an extension $q^*\in\MQB_{\a+1}$ of $q$ such that $f_{q}(\bar\a)\cap (\d\times\o_1)\subseteq f_{q^*}(\a)$.
\end{lemma}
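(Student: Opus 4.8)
The plan is to build $q^*$ by a single application of clause (7) in the definition of condition, preceded by a reduction that puts us in position to invoke it, and followed by a $\sigma$-closure argument to absorb finitely or countably many such applications. First I would observe that the hypotheses (1)--(3) of the lemma are exactly tailored to match the hypotheses of clause (7): given the correct thread $\langle\vec{\mtcl E},(\emptyset,\a)\rangle$, with $\vec{\mtcl E}$ ending in $e=\langle(N_0,\g_0),(N_1,\b)\rangle$, set $\d=\min\{\d_{N^i_0}\mid i\le n\}$ and $\bar\a=\Psi_{\vec{\mtcl E}}^{-1}(\a)$ (so $\Psi_{\vec{\mtcl E}}(\bar\a)=\a$; note the thread is running from $N^0_0$ `down' to $N^n_1$, so it is $\bar\a$ that gets mapped to $\a$). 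Hypothesis (3) says $q\restriction\m(\a)$ forces every two distinct nodes in $f_q(\bar\a)\cap(\d_{N_0}\times\o_1)$ to be incomparable in $\lusim T_{\m(\a)}$, and since $\d\le\d_{N_0}$ we get the same for $f_q(\bar\a)\cap(\d\times\o_1)$; this is precisely the hypothesis needed to run clause (7) with $r:=q\in\MQB^{\d_{N_0}}_{\a+1}$ (after noting $q$ indeed lies in this suborder, or passing to a dense subcondition if not — see below). Clause (7) then hands us $r^*\le q$ with $f_{r^*}(\a)\cap(\d\times\o_1)\subseteq f_{r^*}(\bar\a)\cap(\d\times\o_1)$ and with every two distinct nodes in $(f_q(\bar\a)\cap(\d\times\o_1))\cup f_q(\a)$ forced incomparable in $\lusim T_{\m(\a)}$.

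The one-sided inclusion from clause (7) gives $f_{r^*}(\a)\cap(\d\times\o_1)\subseteq f_{r^*}(\bar\a)\cap(\d\times\o_1)$, but the lemma demands equality $f_{q^*}(\a)\cap(\d\times\o_1)=f_q(\bar\a)\cap(\d\times\o_1)$. To close the gap I would further extend $r^*$ by throwing the nodes of $f_q(\bar\a)\cap(\d\times\o_1)$ into coordinate $\a$, i.e. pass to $q^*$ with $f_{q^*}(\a)=f_{r^*}(\a)\cup(f_q(\bar\a)\cap(\d\times\o_1))$; the incomparability conclusion (b) of clause (7) is exactly what guarantees clause (4) of the definition of condition survives this enlargement. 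One also has to check that clauses (5) and (6) — closure under copying, and the copy-into-the-past requirement — are preserved; here the key point is that all the new nodes sit below $\d\times\o_1$, where $\d$ is the minimal model height along the thread, so no edge of $\tau_{q^*}$ can `see' these nodes at a place where it would be forced to copy them anywhere new; hypothesis (1) of the lemma, which asserts the relevant segments of $f_q(\bar\a)$ and $f_q(\Psi_{N_0',N_1'}(\a))$ already agree below $\d_{N_0'}$ for low-height edges, is what makes this bookkeeping go through. A symmetric move may also be needed to enlarge $f_{q^*}(\bar\a)$ if clause (7) or the construction forced extra nodes into coordinate $\a$ that must be reflected back, but again everything happens below $\d\times\o_1$.

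Finally, if $q$ as given does not already lie in $\MQB^{\d_{N_0}}_{\a+1}$ — i.e. there are generalized edges $\langle(N_0',\a+1),(N_1',\g_1')\rangle$ from $\tau_q$ with $\d_{N_0'}\ge\d_{N_0}$ — then $q$ is not a legal input to clause (7) as literally stated. I would handle this either by first passing to a restriction/quotient of $q$ that lands in the right suborder, performing the construction there, and then amalgamating back using Lemma \ref{compl} (or the $\oplus$ operation and Lemma \ref{closure-sym}); or, more cleanly, by arguing that the conclusion we want only concerns coordinates $\le\a$ and the edge $e$ itself, so we may work inside $\MQB^{\d_{N_0}}_{\a+1}$ throughout and then restore the high-height edges at the end, which cannot interfere for the same below-$\d\times\o_1$ reason as above. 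The main obstacle, I expect, is precisely this verification of clauses (5) and (6) for $q^*$: one must check that adding nodes to coordinates $\a$ and $\bar\a$ does not trigger a cascade of further required copies up and down the (countably many) edges of $\tau_{q^*}$ — and the reason it does not is the careful choice of $\d$ as the minimum model height along the thread together with hypothesis (1), so that every edge either has height below $\d$ (and already agrees, by (1)) or height at least $\d$ (and hence its small model $N_0'$ satisfies $\d_{N_0'}\ge\d$, so $\d\times\o_1\subseteq\d_{N_0'}\times\o_1$ contains no node that $N_0'$ omits — wait, rather: such an edge's copying constraint only concerns nodes in $f_{q^*}(\eta)\cap N_0'$, and for $\eta\in\{\a,\bar\a\}$ the new nodes lie in $\d\times\o_1\subseteq N_0'$, so one must check they are already present in the target coordinate, which is what (1) and the adequacy of $q\restriction\a$ from hypothesis (2) deliver, via Lemma \ref{transfer} applied to the relevant sub-threads). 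Writing this cascade-termination argument carefully is the crux; everything else is routine.
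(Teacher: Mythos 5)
Your proposal is correct and follows essentially the same route as the paper's proof: set aside the marker-$(\a+1)$ edges of height $\geq\d_{N_0}$ (the paper replaces them by their restrictions to $\a$, so as to land in $\MQB^{\d_{N_0}}_{\a+1}$), apply clause (7) once, enlarge coordinate $\a$ by $f_q(\bar\a)\cap(\d\times\o_1)$, and verify clause (6) for the restored edges — via hypothesis (1) for the edges of height below $\d$ and via adequacy of $q\restriction\a$ applied to the concatenated thread $(\vec{\mtcl E}\restriction\a)^{\frown}\langle e'\restriction\a\rangle$ for the others (this, rather than Lemma \ref{transfer}, which concerns transferring incomparability, is the tool the paper actually invokes, and it is also what your appeal to ``adequacy of $q\restriction\a$'' amounts to). One small caution: to get the stated equality rather than just $\supseteq$, the paper takes $f_q(\a)\cup(f_q(\bar\a)\cap(\d\times\o_1))$ at coordinate $\a$ and keeps $f_{r^*}$ only strictly below $\a$, whereas your choice $f_{q^*}(\a)=f_{r^*}(\a)\cup(f_q(\bar\a)\cap(\d\times\o_1))$ could overshoot, since the extension $r^*$ handed to you by clause (7) may have added nodes below $\d\times\o_1$ at stage $\a$ that are not in $f_q(\bar\a)$.
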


\begin{proof}
We may obviously assume $\bar\a \neq \a$ as otherwise there is nothing to prove. Let $r=(f_q, (\tau_q\setminus\tau) \cup (\tau_q \restriction \a) \cup (\tau\restriction \a)),$ where
$$\tau = \{\langle (N_0',  \g_0'), (N_1', \g_1')\rangle \in \tau_q\mid \max\{\g_0', \g_1'\}=\a+1,\, \delta_{N_0'}\geq \delta_{N_0} \}.$$
Then $r \in \MQB_{\a+1}^{\delta_{N_0}}$ and $e\restriction \a$ comes from $\tau_r$. To see the latter claim, note that
 $e$ clearly comes from $\tau$, and hence $e\restriction \a$ comes from $\tau \restriction \a \subseteq \tau_r$.
Also note that $r \restriction \m(\a)=q\restriction\m(\a),$ hence by clause (2), $r\restriction\m(\a)$ forces every two distinct nodes in $f_q(\bar\a)\cap (\d\times\o_1)$ to be incomparable in $\lusim{T}_{\m(\a)}$.

Set
$$\vec{\mtcl E}^{\ast}=\langle e \rangle^{\frown } (\langle (N^i_0, \g^{i}_0), (N^i_1, \g^i_1) \rangle \restriction \a \mid 0< i \leq n).$$
Then
 $(\vec{\mtcl E}^{\ast}, (\emptyset, \a))$ is a correct thread (since $\bar\a<\a$).
Since clearly all members of $\vec{\mtcl E}^{\ast}$ come from $\tau_r\cup\{e\}$, by clause (7) of the definition of $\MQB_{\a+1}$-condition for $q$
there is an extension $r^* \in \MQB_{\a+1}^{\delta_{N_0}}$ of $r$ such that
$$f_{r^*}(\bar\a)\supseteq f_{r}(\a)\cap (\d\times\o_1)=f_{q}(\a)\cap (\d\times\o_1)$$ and such that
 $r^*\restriction \m(\a)$ forces every two distinct nodes in $$(f_q(\bar\a)\cap (\d\times\o_1))\cup f_q(\a)$$ to be incomparable in $\lusim{T}_{\m(\a)}$.
Let
$$q^*=(f_{r^*}\restriction \a \cup\{(\a, f_q(\a)\cup (f_q(\bar\a)\cap (\d\times\o_1)))\}, \tau_{q}\cup\tau_{r^*}\restriction\a).$$
It suffices to show that  $q^*$ is a $\MQB_{\a+1}$-condition, as then it is an extension of $q$ in $\MQB_{\a+1}$ as desired.

To see that $q^* \in \MQB_{\a+1}$, we only need to show that $q^*$ satisfies clause (6) of the definition of $\MQB_{\a+1}$-condition, as all other clauses can be checked easily.
Thus let $e'=\langle (N_0',  \a+1), (N_1', \g_1')\rangle \in \tau_{q}\cup\tau_{r^*}\restriction\a$ be such that
$\alpha' := \Psi_{N_0', N_1'}(\alpha)<\g_1'$ and let us note that in fact $e'\in\tau_q$. We have to show that
\[
\big(f_q(\a)\cup (f_q(\bar\a) \big)\cap (\d_{N_0}\times\o_1) \cap N_0' \subseteq f_{q^*}(\a').
\]
 We may assume that $\a'<\a$.
Since $e'\in\tau_q$, we have that
$f_q(\a) \cap N_0' \subseteq f_{q}(\a') \subseteq f_{q^*}(\a')$.
To show that $f_q(\bar\a)\cap (\d_{N_0}\times\o_1) \cap N_0' \subseteq f_{q^*}(\a')$, let $x=(\rho, \zeta) \in f_q(\bar\a)\cap (\d_{N_0}\times\o_1)$ with $\rho<\d_{N_0'}$. Let $$\vec{\mtcl E}^{-1}=(\langle (N^{n-i}_1, \g^{n-i}_1), (N^{n-i}_0, \g^{n-i}_0)\rangle\mid i\leq n\rangle$$  and let us consider the correct
$\tau_q\restriction \a$-thread $\langle \vec{\mtcl F}, (\rho, \bar\a)    \rangle$, where $$\vec{\mtcl F}=(\vec{\mtcl E}^{-1}\restriction \a)^{\frown} \langle e'\restriction \a \rangle.$$ Then $\Psi_{\vec{\mtcl F}}(\bar\a)=\a'$,
and since $q\restriction \a$ is adequate, we have $x \in f_{q^*}(\a')$. Thus, the conclusion follows.
\end{proof}

\begin{lemma}\label{adequate} For every $\b\leq\k^+$, the set of adequate $\MQB_\b$-conditions is dense in $\MQB_\b$.
\end{lemma}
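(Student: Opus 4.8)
The plan is to prove Lemma~\ref{adequate} by a bootstrapping argument, using the density of weakly adequate conditions (already established in the text) as the base, and then upgrading weak adequacy to full adequacy by closing off clause~(2) of the definition of adequate condition through an $\omega$-step construction, appealing to Lemma~\ref{str-props} at each step to perform the required copying of nodes along correct threads. So, fix $\b\le\k^+$ and a condition $q\in\MQB_\b$; we want an adequate extension.

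\textbf{Outline of the construction.} First I would pass to a weakly adequate extension $q_0\le q$, which is possible by the density of weakly adequate conditions. Now I build a decreasing sequence $(q_i)_{i<\o}$ of $\MQB_\b$-conditions, starting from $q_0$, by a book-keeping argument that enumerates all \emph{potential demands} of the form: a pair consisting of a correct $\tau_{q_i}$-thread $\langle\vec{\mtcl E},(\r,\bar\a)\rangle$ (with $\vec{\mtcl E}$ consisting of edges, $\r<\k$, $\a=\Psi_{\vec{\mtcl E}}(\bar\a)$) together with a node $x=(\r,\zeta)\in f_{q_i}(\bar\a)$. For each such demand appearing at stage $i$, at some later stage $m\ge i$ we want $\a\in\dom(f_{q_m})$ and $x\in f_{q_m}(\a)$. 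Since all the relevant data ($\tau_{q_i}$, $f_{q_i}$, hence the set of correct threads through $q_i$) are countable and each condition stays countable along the sequence, a standard interleaving guarantees that every demand that ever arises is eventually met; then the greatest lower bound $q_\o$ of $(q_i)_i$ (which exists by Lemma~\ref{countable closure} and, being an increasing union of weakly adequate conditions in the relevant sense, remains weakly adequate) will satisfy clause~(2) as well, hence be adequate. Strictly, one should check that satisfying all demands of $q_\o$ reduces to satisfying demands of the $q_i$'s, which is immediate because $\tau_{q_\o}=\bigcup_i\tau_{q_i}$ and $f_{q_\o}(\bar\a)=\bigcup_i f_{q_i}(\bar\a)$, so any correct thread and node in $q_\o$ already occur in some $q_i$.

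\textbf{The single step.} The heart of the matter is: given a weakly adequate $q'\in\MQB_\b$, a correct $\tau_{q'}$-thread $\langle\vec{\mtcl E},(\r,\bar\a)\rangle$ with $\vec{\mtcl E}$ consisting of edges, and a node $x=(\r,\zeta)\in f_{q'}(\bar\a)$, produce an extension $q''\le q'$ with $\a\in\dom(f_{q''})$ and $x\in f_{q''}(\a)$, where $\a=\Psi_{\vec{\mtcl E}}(\bar\a)$. Here is where Lemma~\ref{str-props} is designed to be invoked: working inside $\MQB_{\a+1}$ (and then using Lemma~\ref{compl} to lift back to $\MQB_\b$ if $\a+1<\b$, noting $x$ only needs to be inserted at coordinate $\a\le$ that bound), one verifies the three hypotheses of Lemma~\ref{str-props} — hypothesis~(2), adequacy of $q'\restriction\a$, is arranged by doing this step only after we have (by an inner induction on $\a$, or by interleaving) made the relevant initial segment adequate; hypothesis~(3), that $q'\restriction\m(\a)$ forces the nodes of $f_{q'}(\bar\a)\cap(\d_{N_0}\times\o_1)$ pairwise incomparable in $\lusim T_{\m(\a)}$, follows from weak adequacy of $q'$ at coordinate $\bar\a$ together with Lemma~\ref{transfer} transporting incomparability witnesses along $\vec{\mtcl E}$; and hypothesis~(1), agreement of $f_{q'}(\bar\a)$ with $f_{q'}(\Psi_{N_0',N_1'}(\a))$ below $\d_{N_0'}$ for smaller-height edges, is again an instance of clause~(6)/adequacy on an initial segment. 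Lemma~\ref{str-props} then yields $q^*$ with $f_{q^*}(\a)\cap(\d\times\o_1)=f_{q'}(\bar\a)\cap(\d\times\o_1)$; since $x=(\r,\zeta)$ lies below the relevant height $\d$ (as $\r<\d$ because $\r$ is in the range of the thread's isomorphisms and hence below $\d_{N^i_0}$ for all $i$ — this is exactly why we restrict to $\r<\k$ and to threads through nodes in the models), we get $x\in f_{q^*}(\a)$ as wanted. One then reorganizes the book-keeping so that every potential demand gets its turn cofinally often, and also keeps revisiting lower coordinates to maintain the inner-induction hypothesis on adequacy of initial segments.

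\textbf{Main obstacle.} The delicate point is the interaction between the two clauses: performing the copying required by clause~(2) at coordinate $\a$ may create, at coordinate $\a$, new pairs of nodes whose incomparability (clause~(1), weak adequacy) must then be witnessed, and may also create new correct threads demanding further copying. The bookkeeping must therefore simultaneously (i) discharge weak-adequacy demands, (ii) discharge copying demands, and (iii) keep every proper initial segment adequate so that hypothesis~(2) of Lemma~\ref{str-props} stays available; one must check this process closes off after $\o$ steps. This is genuinely an interleaving/fusion argument rather than a difficulty about any single extension — each individual extension is handed to us by Lemma~\ref{str-props} — but verifying that the limit condition $q_\o$ indeed satisfies \emph{both} clauses requires care that no demand is lost, which is ensured because $\MQB_\b$ is $\sigma$-closed with greatest lower bounds (Lemma~\ref{countable closure}) and all objects in sight are hereditarily of size $\le\al_1$ while the demands form an $\o$-indexed family that the bookkeeping exhausts.
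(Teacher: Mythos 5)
Your overall plan is the paper's plan: the paper also proves this by reducing everything to single copying steps handled by clause (7) of the definition of condition together with Lemma \ref{str-props}, interleaved with repeatedly making the initial segment $q\restriction\a$ adequate again (via the induction hypothesis and Lemma \ref{compl}) and closing off with a bookkeeping construction of length $\o$ and greatest lower bounds (Lemma \ref{countable closure}). However, there is a concrete gap in your bookkeeping: you restrict the clause-(2) demands to correct threads \emph{consisting of edges}. Adequacy requires clause (2) for all correct $\tau_q$-threads, and threads are by definition sequences of \emph{generalized} edges, so anti--edges are allowed; the anti--edge instances (copying a node from $f_q(\bar\a)$ to $f_q(\a)$ with $\a\geq\bar\a$, i.e.\ ``into the future'') are exactly the direction that clauses (5) and (6) of the forcing never provide and that clause (7) and Lemma \ref{str-props} were introduced to secure, and they are what later arguments (Lemma \ref{adeq-sym} as used in Subclaim \ref{copies-of-w-part}) actually rely on. Edge-only closure is strictly weaker: a single anti--edge demand is not implied by the demands for edges, so the limit of your construction has not been shown adequate. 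Your parenthetical justification (that $\r<\d$ because $\r$ lies in the models) is true for all threads, edges or not, so it does not justify the restriction; the fix is simply to run the same machinery over generalized-edge threads, as the paper's successor step does (it explicitly picks a generalized edge with marker $\b$ and a sequence of generalized edges forming a correct thread), organized as an honest induction on $\b$ so that ``make $q\restriction\a$ adequate'' — an instance of the lemma at a smaller index — is actually available and can be reapplied after every step that adds nodes below $\a$; your ``inner induction on $\a$, or by interleaving'' needs to become that induction, including the limit cases (for countable cofinality one interleaves adequate extensions of longer and longer initial segments via Lemma \ref{compl}).

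A second, subtler gap is your derivation of hypothesis (3) of Lemma \ref{str-props}. Weak adequacy at $\bar\a$ plus Lemma \ref{transfer} gives you conditions $\Psi_{\vec{\mtcl E}}(r_0)$, $\Psi_{\vec{\mtcl E}}(r_1)$ in the antichains $A^{\m(\a)}_{x_i,\bar\r}$ forcing the incomparability in $\lusim{T}_{\m(\a)}$; but hypothesis (3) asks that $q'\restriction\m(\a)$ \emph{itself} force it, i.e.\ that it extend suitable members of those antichains. That $q'\restriction\m(\a)$ extends the transported conditions is not part of Lemma \ref{transfer}, and transporting the relation ``is extended by $q'$'' along a (not necessarily connected) thread is not something clauses (5) and (6) give you; it is essentially the symmetry you are in the middle of establishing. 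The paper sidesteps this by first invoking clause (7) of the definition of condition, whose conclusion (b) hands over exactly the incomparability statement needed, and only then applying Lemma \ref{str-props}; you should arrange the single step the same way rather than appeal to Lemma \ref{transfer} at this point.
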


\begin{proof} Let $q\in\MQB_\b$.
We will find an adequate $\MQB_\b$-condition $q^*$ stronger than $q$. We prove this by induction on $\b$.

First, suppose that $\b$ is a limit ordinal of countable cofinality and let $(\b_i)_{i<\omega}$ be an increasing sequence of ordinals cofinal in $\b.$
We define, by induction on $i<\omega$, two sequences $(q_i)_{i<\omega}$ and $(r_i)_{i<\omega}$ of conditions such that $q_0=q$ and such that for all $i<\omega,$
\begin{enumerate}

\item $q_i \in \MQB_{\b}$,
\item $r_i$ is an adequate $\MQB_{\b_i}$-condition,
\item $r_i \leq_{\MQB_{\b_i}} q_i \restriction \b_i,$ and
\item $q_{i+1}=(f_{r_i} \cup f_{q_i}\restriction [\b_i, \b), \tau_{r_i} \cup \tau_{q_i})$.\footnote{Note that by Lemma \ref{compl} (and Lemma \ref{limit}), each $q_{i+1}$ is indeed a $\MQB_\b$-condition.}
\end{enumerate}

The construction can be carried out using the induction hypothesis.

Let $q^*$ be the greatest lower bound of the sequence $(q_i)_{i<\omega}$, which exists by Lemma \ref{countable closure}. Then $q^*$ is an adequate $\MQB_\b$-condition extending $q$. The point is that every instance of adequacy depends on ordinals $\a$, $\bar\a<\b$ and is in fact verified at some high enough stage $i$ of the construction.

If $\b$ is a limit ordinal of uncountable cofinality, then we fix some $\bar\b<\b$ such that $\dom(f_q)\subseteq\bar\b$,
find an adequate extension $q'$ of $q\restriction\bar\b$ in $\MQB_{\bar\b}$ (which exists by the induction hypothesis), and note that $q^*=(f_{q'}, \tau_{q'}\cup\tau_q)$ is an extension of $q$ in $\MQB_\b$ by Lemma \ref{compl}. But then we are done since $q^*$ is adequate by the choice of $q'$.

Finally, suppose that $\b=\a+1$ is a successor ordinal. By induction hypothesis together with Lemma \ref{compl}, we may assume that $q\restriction \a$ is adequate. We may also assume that there is some generalized edge coming from $\tau_q$ of the form $\langle (N_0, \b), (N_1, \g_1)\rangle$, as otherwise we are done. We build $q^*$ as the greatest lower bound of a suitably constructed descending sequence $(q_n)_{n<\o}$ of
conditions extending $q_0=q$ and such that $q_n$ is weakly adequate, and $q_n\restriction\a$ is adequate for each $n$. For every $n$, and assuming $q_n$ has been found, we construct $q_{n+1}$ in the following way.

Let us pick some correct $\tau_{q_n}$-thread $\langle\vec{\mtcl E}, (\rho, \bar\a)\rangle$. Let
$\rho<\k$, $\zeta<\o_1$, and suppose $x=(\r, \zeta)\in f_{q_n}(\bar\a)$. Let $\d=\min\{\d_{N^i_0}\,:\,i\leq m\}$, where $\vec{\mtcl E}=(\langle (N^i_0, \g^i_0), (N^i_1, \g^i_1)\rangle\,:\,i\leq m)$. Let $\a=\Psi_{\vec{\mtcl E}}(\bar\a)$.
By the adequacy of $q_n\restriction\a$ and using Lemma \ref{transfer}, we have that $q_n\restriction\mu(\a)$ forces any two distinct nodes in $f_{q_n}(\bar\a)\cap (\d\times\o_1)$ to be incomparable in $\lusim{T}_{\m(\a)}$.
This is true since for any two distinct nodes $x, y$ in $f_{q_n}(\bar\a)\cap (\d\times\o_1)$, by weak adequacy of $q_n$, some condition $r \in \MQB_{\mu(\bar\alpha)}$ weaker than $q_n \restriction \mu(\bar\alpha)$ and belonging to the final model of $\vec{\mathcal E}$ forces $x$ and $y$ to be incomparable in $T_{\mu(\bar\alpha)}$. By adequacy of $q_n \restriction \mu(\bar\alpha), \Psi_{\vec{\mathcal E}}(r)$ is also weaker than $q_n \restriction \mu(\bar\alpha)$. And by Lemma \ref{transfer}, $\Psi_{\vec{\mathcal E}}(r)$ is a condition in $\MQB_{\mu(\alpha)}$ forcing $x$ and $y$, which of course are fixed by $\Psi_{\vec{\mathcal E}}$, to be incomparable in $T_{\mu(\alpha)}$.

By Lemma \ref{str-props} we may find an extension $q_{n+1}^0\in \MQB_\b$ of $q_n$ such that $$f_{q_n}(\bar\a)\cap(\d\times\o_1)\subseteq f_{q_{n+1}^0}(\a)\cap (\d\times\o_1).\footnote{In fact, by the proof of Lemma \ref{str-props} it suffices for this to simply add to $f_{q_n}(\a)$ all nodes in $f_{q_n}(\bar\a)\cap (\d\times\o_1)$.}$$

Let then $q_{n+1}^1=(f_{q_{n+1}^0},\tau_{q_{n+1}^1})$, where $\tau_{q_{n+1}^1}$ is the union of $\tau_{q_{n+1}^0}$ and the set of edges of the form $$\Psi_{\vec{\mtcl E}}(\langle (N_0', \g_0'), (N_1', \g_1')\rangle)$$ with $\langle (N_0', \g_0'), (N_1', \g_1')\rangle\in\tau_{q_n}\cap N^0_0$ and $\g_0'$, $\g_1'\leq\bar\a$.
We note that $\Psi_{\vec{\mtcl E}}(\langle (N_0', \g_0'), (N_1', \g_1')\rangle)$ is obtained from some already present edge $\langle \Psi_{\vec{\mtcl E}}(N_0', \tilde\g_0'), \Psi_{\vec{\mtcl E}}(N_1', \tilde\g_1')\rangle$ in $q_n$ by, at most, increasing some of the markers 
$\tilde\g_\epsilon'$ to $\a$, and that doing so does not force us to add working parts that were not already present in $q_{n+1}^0$.

Let now $q_{n+1}$ be an extension of $q_{n+1}^1$, obtained by first extending $q_{n+1}^1\restriction\a$ to an adequate condition using the induction hypothesis and then applying clause (7) in the definition of condition, such that $$f_{q_{n+1}^1}(\a)\cap (\d\times\o_1)\subseteq f_{q_{n+1}}(\bar\a).$$ By further extending $q_{n+1}$ using Lemma \ref{weak-adeq} and the induction hypothesis (and Lemma \ref{compl}), we may assume in addition that $q_{n+1}$ is weakly adequate and $q_{n+1}\restriction\a$ is adequate.

Using some suitable book-keeping, we can make sure that $(q_n)_{n<\o}$ is built in such a way that every relevant $\langle\vec{\mtcl E}, (\rho, \bar\a)\rangle$ for which there is some $x=(\rho, \zeta)\in f_{q_n}(\bar\a)$, occurring at any stage $m$ in the construction, is taken care of at infinitely many stages $n>m$. Let $q^*$ be the greatest lower bound of $\{q_n \mid n<\o\}$.
  We then have that $q^*$ is an adequate condition extending $q$.
\end{proof}

Given a $\MQB_{\k^+}$-condition $q$ and a model $N$, we denote by $q\restriction N$ the ordered pair $(f, \tau_q\cap N)$, where $f$ is the function with domain $\dom(f_q)\cap N$ such that $f(x)=f_q(x)\cap N$ for every $x\in \dom(f)$.

 It will be necessary, in the proof of Lemma \ref{preserving ch}, to adjoin a certain edge to some given condition. This will be accomplished by means of the following lemma.



\begin{lemma}\label{technical-lemma2}
Suppose for every $\b<\k^+$, $\MQB_{\b}$ has the $\k$-c.c. Let $\a<\k^+$ and let $$e=\langle (N_0, \a+1), (N_1, \g_1)\rangle$$ be a generalized edge.
Suppose $(N_0, \in, \Phi_{\a+1})\prec (H(\k^+), \in, \Phi_{\a+1})$. Suppose $\MQB_{\x}\cap N_0$ is a complete suborder of $\MQB_{\x}$ for every $\x\in (\a+2)\cap N_0$.
Let $r\in\MQB^{\d_{N_0}}_{\a+1}$ and suppose $e\restriction\a$ comes from $\tau_r$. Let $$\vec{\mtcl E}=(\langle (N^i_0, \g^i_0), (N^i_1, \g^i_1)\rangle\,\mid\,i< n)$$ be a sequence of generalized edges coming from $\tau_r\cup\{e\}$ such that $\langle (N^0_0, \g^0_0), (N^1_0, \g^1_0)\rangle=e$ and $\langle \vec{\mtcl E}, (\emptyset, \a)\rangle$ is a correct thread. Let $\d=\min\{\d_{N^i_0}\,\mid\, i<n\}$ and $\bar\a= \Psi_{\vec{\mtcl E}}(\a)$. Suppose $\bar\a<\a$ and suppose $r\restriction\m(\a)$ forces every two distinct nodes in $f_r(\bar\a)\cap (\d\times\o_1)$ to be incomparable in $\lusim{T}_{\m(\a)}$.

Then there is an extension $r^*\in\MQB_{\a+1}^{\delta_{N_0}}$ of $r$
 such that
 \begin{enumerate}
 \item $f_{r}(\a)\cap (\d\times\o_1)\subseteq f_{r^*}(\bar\a)$ and
 \item $r^*\restriction \m(\a)$ forces every two distinct nodes in $$(f_{r}(\bar\a)\cap (\d\times\o_1))\cup f_r(\a)$$ to be incomparable in $\lusim{T}_{\m(\a)}$.
 \end{enumerate}
\end{lemma}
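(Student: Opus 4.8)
The conclusion of the lemma has exactly the shape of the conclusion of clause~(7) of the definition of condition, the hypothesis missing from clause~(7) --- that $r\restriction\m(\a)$ forces every two distinct nodes of $f_r(\bar\a)\cap(\d\times\o_1)$ to be $\lusim{T}_{\m(\a)}$-incomparable --- being the third assumption made here; the new feature is that only $e\restriction\a$, and not $e$ itself, is assumed to come from $\tau_r$. One cannot simply adjoin $e$ to a condition and invoke clause~(7), since having $e$ in a side condition already presupposes the clause-(7)-type fact we are after; so the plan is to construct $r^*$ directly: keep $\tau_{r^*}$ equal to $\tau_r$ (so that $r^*$ remains in $\MQB^{\d_{N_0}}_{\a+1}$, no new edge having been added), extend $r$ only below $\m(\a)$, and add to $f_r(\bar\a)$ the nodes of $f_r(\a)\cap(\d\times\o_1)$, together with the further copies that clause~(6) forces along the (genuine) edges of $\tau_r$. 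The continuation edge $e^*$, the $\k$-chain condition, and the completeness of $\MQB_\x\cap N_0$ in $\MQB_\x$ for $\x\in(\a+2)\cap N_0$ are used only in the one substantive step described below.

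I would begin by recording what the hypotheses give. Since $\Psi_{N_0,N_0^*}(\a)$ is defined, $\a\in N_0$, hence $\m(\a)\in N_0$; writing $\a'=\Psi_{N_0,N_0^*}(\a)$, the assumptions $\a'<\a$ and $\Psi_{N_0,N_0^*}(\a+1)<\g^*$ say that along $e^*$ the stage $\a$ of $N_0$ is matched with the \emph{strictly earlier} stage $\a'$ of the projection $N_0^*$, with $\Psi_{N_0,N_0^*}$ restricting to an isomorphism from $(N_0,\in,\Phi_{\m(\a)})$ onto $(N_0^*,\in,\Phi_{\m(\a')})$ that carries $\lusim{T}_{\m(\a)}$ to $\lusim{T}_{\m(\a')}$; thus the one-step thread built from $e^*$ is correct, and Lemma~\ref{transfer} applies along it.

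The substantive step is to extend $r\restriction\m(\a)$ to a condition $s\in\MQB_{\m(\a)}$ which forces every two distinct nodes of $(f_r(\bar\a)\cap(\d\times\o_1))\cup f_r(\a)$ to be $\lusim{T}_{\m(\a)}$-incomparable. By clause~(4) for $r$ and by the third hypothesis, each of $f_r(\a)$ and $f_r(\bar\a)\cap(\d\times\o_1)$ is already forced pairwise $\lusim{T}_{\m(\a)}$-incomparable by $r\restriction\m(\a)$, so the only possible obstruction is a cross-pair $x\in f_r(\bar\a)\cap(\d\times\o_1)$, $y\in f_r(\a)$ with $x\neq y$ which $r\restriction\m(\a)$ already forces to be $\lusim{T}_{\m(\a)}$-comparable. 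To rule this out, one transports the hypothetical comparability down, via $\Psi_{N_0,N_0^*}$ and the correct thread coming from $e^*$, to the strictly earlier stage $\a'$: here the $\k$-chain condition is what guarantees that the $F$-first maximal antichains $A^{\m(\a)}_{x,\bar\r}$ and $A^{\m(\a')}_{x,\bar\r}$ exist (and are given by the same formula over $(H(\k^+),\in,\Phi_{\m(\a)})$ and over $(H(\k^+),\in,\Phi_{\m(\a')})$), the completeness of $\MQB_\x\cap N_0$ in $\MQB_\x$ at a suitable $\x\in(\a+2)\cap N_0$ is what makes the relevant conditions reflect into $N_0$, and Lemma~\ref{transfer} is what actually moves the comparability; since stage $\a'$ lies strictly below $\a$, this contradicts either clause~(4) for $r$ or the third hypothesis applied at $\a'$. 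Hence $s$ exists, and, shrinking it further below $\m(\a)$ using Lemma~\ref{countable closure}, we may assume $s$ forces the corresponding incomparabilities also for the enlarged value $f_r(\bar\a)\cup(f_r(\a)\cap(\d\times\o_1))$ and for every stage reached from $\bar\a$ by copying along $\tau_r$.

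Finally I would let $r^*$ be the amalgamation of $s$ with $r$ provided by Lemma~\ref{compl}: its working part is $f_s$ below $\m(\a)$ together with, at and above $\m(\a)$, the clause-(6)-closure of the function obtained from $f_r$ by putting $f_r(\a)\cap(\d\times\o_1)$ into the value at $\bar\a$; its side condition is $\tau_s\cup\tau_r$. No edge has been added, and every edge of $\tau_r$ whose first model carries marker $\a+1$ already has that model of height $<\d_{N_0}$, so $r^*\in\MQB^{\d_{N_0}}_{\a+1}$; item~(2) holds by the choice of $s$; and item~(1) holds because the value at $\a$ is not enlarged (the genuine edges of $\tau_r$ copy nodes of $f_{r^*}(\bar\a)$ only to stages $\leq\bar\a<\a$), so $f_{r^*}(\a)\cap(\d\times\o_1)=f_r(\a)\cap(\d\times\o_1)\subseteq f_{r^*}(\bar\a)$. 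The main obstacle is precisely the substantive step --- showing that $r\restriction\m(\a)$ forces no cross-pair comparable; this is a reflection argument through $e^*$ down to the strictly earlier stage $\a'$, and it is the only place where $e^*$, the $\k$-chain condition, and the completeness of $\MQB_\x\cap N_0$ in $\MQB_\x$ enter. The remaining verifications are routine given Lemmas~\ref{compl}, \ref{countable closure} and~\ref{transfer}.
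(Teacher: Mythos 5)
There is a genuine gap at the heart of your argument, namely in the ``substantive step''. You reduce the lemma to ruling out a cross-pair $x\in f_r(\bar\a)\cap(\d\times\o_1)$, $y\in f_r(\a)$ that is forced comparable, and you propose to refute such comparability by transporting it along $e^*$ to the earlier stage $\a'=\Psi_{N_0,N_0^*}(\a)$ and contradicting ``clause (4) for $r$ or the third hypothesis applied at $\a'$''. But neither of these says anything about $x$ and $y$ at stage $\a'$: clause (4) for $r$ at $\a'$ concerns only the nodes of $f_r(\a')$, which need not contain $x$ or $y$ (indeed $\a'$ need not even lie in $\dom(f_r)$), and the third hypothesis concerns only the pair $\bar\a$, $\lusim{T}_{\m(\a)}$. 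So the transported comparability statement contradicts nothing. The paper's proof has to \emph{manufacture} the configuration your transport argument implicitly assumes: it proceeds by induction on $\a$; it uses the hypothesis $\MQB_{\a+1}\cap N_0\lessdot\MQB_{\a+1}$ to extend $r\restriction N_0$ to a master condition $q_0\in\MQB_{\a+1}\cap N_0$ forcing every member of $\dot G_{\MQB_{\a+1}\cap N_0}$ to be compatible with $r$ (so that everything subsequently built inside $N_0$ below $q_0$ can be amalgamated with $r$); it then adjoins, \emph{inside} $N_0$, a new edge $e'=\langle(N_0',\a),(N_1',\g_1')\rangle$ with $\a':=\Psi_{N_0',N_1'}(\a)<\a$ --- its existence witnessed by $e^*$ and reflected into $N_0$ by elementarity, its adjoinability resting on the induction hypothesis --- so that, via clause (6) and adequacy (Lemma \ref{adequate}) of an amalgamated condition, both the nodes of $f(\a)$ and the relevant nodes of $f(\bar\a)$ really do appear in the value at $\a'$. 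Only then does a hypothetical comparability, decided (by the $\k$-c.c.) by a small condition $\bar r\in\MQB_{\m(\a)}\cap N_0'$ and copied through $\Psi_{N_0',N_1'}$, contradict clause (4) at stage $\a'$ for an adequate common extension with $r$. Your proposal discards precisely these ingredients: you insist on keeping $\tau_{r^*}=\tau_r$, you never form $q_0$ (the completeness hypothesis is invoked only vaguely), and you never place $x,y$ at stage $\a'$, so the contradiction you appeal to is simply not available.

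Two further, related, inaccuracies. First, even if no cross-pair were forced comparable by $r\restriction\m(\a)$, this would not yield your condition $s$: in the step-by-step construction of $s$ one must know that comparability is not forced by the successively stronger conditions either, i.e.\ one must rule out that some \emph{extension} of $r\restriction\m(\a)$ of the relevant kind forces comparability --- and that is exactly the shape of the paper's contradiction argument, which starts from an extension $r'$ forcing comparability rather than from $r\restriction\m(\a)$ itself. Second, the $\k$-c.c.\ is not used merely to guarantee that the antichains $A^{\m(\a)}_{x,\bar\r}$ exist; its essential role here is to localize the condition deciding the comparability inside the small model $N_0'$, so that it can be moved by $\Psi_{N_0',N_1'}$ to stage $\a'$.
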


\begin{proof}
The proof is by induction on $\a$.
To start with, since $\MQB_{\a+1}\cap N_0\lessdot \MQB_{\a+1}$, $r\restriction N_0$ may be extended to a condition $q_0\in\MQB_{\a+1}\cap N_0$ forcing that every condition in $\dot G_{\MQB_{\a+1}\cap N_0}$ is compatible with $r$.

\begin{claim} We may extend $q_0$ to a condition $q_1\in\MQB_{\a+1}\cap N_0$ for which there is a generalized edge $$e'=\langle (N_0', \a), (N_1', \g_1')\rangle\in \tau_{q_1}$$ such that $\MQB_{\x}\cap N_0'$ is a complete suborder of $\MQB_{\x}$ for every $\x\in (\a+1)\cap N_0'$, together with some
$r'\in\MQB^{\d_{N_0'}}_{\a+1}$ with $e'\restriction\a$ coming from $\tau_{r'}$, and together with a sequence $$\vec{\mtcl E'}=(\langle (N'^{i}_0, \g'^{i}_0), (N'^{i}_1, \g'^{i}_1)\rangle\,\mid\,i< n)$$ of generalized edges coming from $\tau_{r'}\cup\{e'\}$ such that $$\langle (N'^{0}_0, \g'^{0}_0), (N'^{1}_0, \g'^{1}_0)\rangle=e'$$ and such that $\langle \vec{\mtcl E'}, (\emptyset, \a)\rangle$ is a correct thread and such that, letting $\d'=\min\{\d_{N'^{i}_0}\,\mid\, i<n\}$ and $\bar\a'= \Psi_{\vec{\mtcl E'}}(\a)$, we have that $\rho<\d'$ for every $(\rho, \nu)\in f_{q_0}(\a)$ with $\rho<\d$, $\bar\a'<\a$, and that $r'\restriction\mu(\a)$ forces every two distinct nodes in $f_{r'}(\bar\a')\cap (\d'\times\o_1)$ to be incomparable in $\lusim{T}_{\m(\a)}$. Moreover, $q_0\in N_0'$,
$q_0\restriction N_1'=q_0\restriction N_0'\cap N_1'$, and
$f_{q_1}(\bar\a')\cap (\d'\times\o_1)=f_{q_1}(\a)\cap (\d'\times\o_1)$.
\end{claim}

\begin{proof}
In order to find $q_1$, we first find  $e'$, $\vec{\mtcl E}'$ and $r'$ in $N_0$ as in the statement. The existence of such objects is witnessed by $e\restriction\a$, $\vec{\mtcl E}$ and $r$, respectively, and can be expressed by a sentence over $(H(\k^+), \in, \Phi_{\a+1})$ with parameters in $N_0$.

We can now find a suitable condition $q_1^-$ in $\MQB_{\a+1}\cap N_0$ extending $q_0$ and such that $e'\in \tau_{q_1^-}$. Indeed, $q_1^-$ is obtained by adding $e'$ to $\tau_{q_0}$ and copying the relevant information coming from $q_0$ into $N_1'$ via $\Psi_{N_0', N_1'}$ so as to make clauses (5) and (6) in the definition of condition hold for $q_1^-$; in other words, $q_1^-=q_0\oplus (\emptyset, \{ e'\})$.

The result of copying any piece of information carried by $q_0$ in $N_0'$ into $N_1'$ will not interfere with any piece of information previously carried by $q_0$ in $N_1'$ as that information is also in $N_0'$ and therefore fixed by $\Psi_{N_0', N_1'}$. Also, clause (7) in the definition of condition is ensured for $e'$ at all ordinals $\x+1\in N_0'\cap\a$ by the induction hypothesis applied to all $\x\in (N_0'\cup N_1')\cap\a$. It then easily follows that $q_1^-$ is a condition.

But now we may find $q_1$ as desired by simply copying the relevant information coming from $q_0$ at stage $\a$ via $\Psi_{N_0', N_1'}$, which again is possible since $q_0\restriction N_1'=q_0\restriction N_0'\cap N_1'$.
\end{proof}

Let us fix $q_1$, $e'$, $\vec{\mtcl E}'$ and $r'$ as given by the claim. By the choice of $q_0$ we may find a common extension $r^*_0\in\MQB_\a$ of $q_1\restriction\a$ and $r\restriction\a$, which we may assume is adequate by Lemma \ref{adequate}.
By adequacy of $r^*_0$ it follows that $f_{r^*_0}(\bar\a')\cap (\d'\times\o_1)=f_{r^*_0}(\bar\a)\cap (\d'\times\o_1)$. Let $q_2\in N_0$ be the amalgamation, as given by Lemma \ref{compl}, of $r^*_0\restriction N_0$ and $q_1$. In order to finish the proof it suffices to argue that $(f, \tau_{q_2})$ is a condition in $\MQB_{\a+1} \cap N_0$, where $f$ is the function such that $f\restriction\a=f_{q_2}\restriction\a$ and $f(\a)=f_{q_0}(\a)\cup (f_{r_0^*}(\bar\a)\cap (\d\times\o_1))$, since then we can take $r^*$ to be any condition in $\MQB_{\a+1}^{\delta_{N_0}}$ extending both $(f, \tau_{q_2})$ and $r$, which exists by the choice of $q_0$.

$(f, \tau_{q_2})$ is of course in $N_0$. By our hypothesis, the only way  $(f, \tau_{q_2})$ could fail to be a condition in $\MQB_{\a+1}$ is that there are distinct $x\in f_{q_0}(\a)$ and $y\in f_{r_0^*}(\bar\a)\cap (\d'\times\o_1)$ such that $q_2\restriction\m(\a)$ does not force $x$ and $y$ to be incomparable nodes in $\lusim{T}_{\m(\a)}$. We can then extend $q_2\restriction\m(\a)$ to some $r'\in\MQB_{\m(\a)}\cap N_0$ forcing $x$ and $y$ to be $\lusim{T}_{\m(\a)}$-comparable. By the $\k$-c.c.\ of $\MQB_{\m(\a)}$ we may of course assume that $r'$ extends some $\bar r\in\MQB_{\m(\a)}\cap N_0'$ forcing $x$ and $y$ to be $\lusim{T}_{\m(\a)}$-comparable. Once again by the choice of $q_0$, let $q\in\MQB_\a$ be a common extension of $r'$ and $r\restriction\m(\a)$ which, thanks to Lemma \ref{adequate}, we may assume is adequate. But now $\Psi_{\vec{\mtcl E}'}(\bar r)$ is a condition weaker than $q\restriction\m(\bar\a')$ (by clauses (5) and (6) in the definition of condition applied to $q$) and forcing $x$ and $y$ to be $\lusim{T}_{\m(\bar\a')}$-comparable, which of course is a contradiction since $x$, $y\in f_q(\bar\a')$.
\end{proof}

\section{The chain condition}
\label{preservation properties}

This section is devoted to proving Lemma \ref{chain condition theorem}.

\begin{lemma}
\label{chain condition theorem}
For each $\beta \leq \k^{+}$, $\MQB_\beta$ has the $\kappa$-chain condition. \end{lemma}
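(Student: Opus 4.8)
The plan is to argue by induction on $\beta$, so that when we treat $\MQB_\beta$ we are entitled to use that $\MQB_\alpha$ has the $\kappa$-chain condition for every $\alpha<\beta$; this is precisely what is needed in order to know that each name $\dot T_{\mu(\alpha)}$ with $\mu(\alpha)<\beta$, and each antichain $A^{\mu(\alpha)}_{x,\bar\rho}$, is well-defined, and that the adequate conditions are dense in $\MQB_\alpha$ for $\alpha<\beta$ (Lemma \ref{adequate}), and hence that all the machinery of Section \ref{definition of forcing notion} is available below $\beta$. The base case and the low cases are immediate (for instance $\MQB_1$ is, up to its degenerate side conditions, the $\kappa$-c.c.\ L\'evy collapse $\Col(\omega_1,{<}\kappa)$, by Lemma \ref{levy}). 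So fix $\beta\leq\kappa^+$ and let $\langle q_\xi\mid\xi<\kappa\rangle$ be conditions in $\MQB_\beta$, which by Lemma \ref{adequate} we may take to be adequate; we look for $\xi\neq\xi'$ with $q_\xi$, $q_{\xi'}$ compatible.

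The first step is a thinning-and-reflection argument. Using that $\kappa$ is inaccessible, so $\kappa^{<\kappa}=\kappa$ and, by our $\GCH$ assumption, there are only $\kappa$-many possibilities for the relevant isomorphism types, I would fix a large regular $\theta$ and build an increasing continuous chain $\langle M_\eta\mid\eta<\kappa\rangle$ of elementary submodels of $(H(\theta),\in,<_\theta,\langle q_\xi\rangle_\xi,\Phi,\langle\Phi_\gamma\rangle_\gamma,\vec e,\dots)$ with $|M_\eta|<\kappa$, ${}^{{<}|M_\eta|}M_\eta\subseteq M_\eta$, and $M_\eta\cap\kappa=\delta_\eta\in\kappa$; the $\delta_\eta$ enumerate a club $C\subseteq\kappa$. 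Passing to a subfamily of size $\kappa$ and re-indexing by $C$, one arranges that all the $q_\xi$ realize one fixed isomorphism type over the structures $(H(\kappa^+),\in,\Phi_\beta)$, that for $\xi<\xi'$ the conditions $q_\xi$, $q_{\xi'}$ are twins whose working parts and side conditions agree below their common height $\delta_{\xi'}$, that $q_\xi\in M_{\xi'}$ (so $M_{\xi'}\cap H(\kappa^+)$ is available as a model in a side condition), and that the heights of the models in $\tau_{q_\xi}$ lie below those in $\tau_{q_{\xi'}}$. In this way the lemma is reduced to showing that every such \emph{separating pair} --- a pair of adequate $\MQB_\beta$-conditions $q_0\in N$, $q_1$ with $N$ of height $\delta=\delta_N$, with $\tau_{q_0}$ involving only models of height $<\delta$ and $\tau_{q_1}$ only models of height $\geq\delta$, and with $q_0$, $q_1$ twins agreeing below $\delta$ --- is compatible; this is Claim \ref{separating-pair-compatibility}. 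It is at this point --- in extracting a pair that genuinely amalgamates, and in the proof of the claim --- that the weak compactness of $\kappa$ (via its $\Pi^1_1$-indescribability, equivalently the tree property) is used in an essential way, together with the fact that the length of the iteration is at most $\kappa^+$, so that all the forcing relations $\Vdash_{\MQB_\xi}^{H(\kappa^+)}$ live inside $H(\kappa^+)$ and reflect correctly.

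To prove the claim I would produce the common extension of a separating pair $q_0$, $q_1$ from the amalgamation $q_0\oplus q_1$ after first adjoining to $q_1$ a single new edge linking a model containing a copy of $q_0$ to $N$ (or to its twin), which forces the information of $q_0$ to be copied coherently into the larger models of $q_1$ --- this adjunction being carried out as in the proof of Lemma \ref{str-props}. To see that the resulting object is a genuine $\MQB_\beta$-condition one invokes Lemma \ref{pre-separating-pair-compatibility}, which reduces the task to checking, for each $\alpha'<\beta$, that $(q_0\restriction\alpha')\oplus(q_1\restriction\alpha')\in\MQB_{\alpha'}$ implies $(q_0\restriction\alpha'{+}1)\oplus(q_1\restriction\alpha'{+}1)\in\MQB_{\alpha'+1}$. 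Since one side of the amalgamation contributes only models of small height, Lemmas \ref{high-amalg}, \ref{orbits} and \ref{simplifying-amalg} let one replace arbitrary $\tau_{q_0}\oplus\tau_{q_1}$-threads by $\tau_{q_0}\cup\tau_{q_1}$-threads; granting this, clauses (5) and (6) at stage $\alpha'$ follow from Lemma \ref{closure-sym}, and the substantive content is clauses (4) and (7): every node that the amalgamation copies into $f(\alpha')$, equivalently into $f(\bar\alpha')$ for images $\bar\alpha'$ of $\alpha'$ along threads, must be forced incomparable in $\dot T_{\mu(\alpha')}$ with the nodes already present there. This one verifies using the adequacy of $q_0$ and $q_1$ (clause (1) of adequacy supplies the witnessing antichains $A^{\mu(\alpha')}_{x,\bar\rho}$), Lemmas \ref{transfer} and \ref{adeq-sym} to transport these antichains and the incomparabilities they witness along the threads, the inductive hypothesis that $\MQB_{\mu(\alpha')}$ is $\kappa$-c.c., and an appeal to clause (7) in the definition of condition, in the form of Lemma \ref{str-props}, to actually realize the required incomparabilities in a further extension; the limit values of $\alpha'$ are handled as in Lemma \ref{pre-separating-pair-compatibility} itself.

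The hard part will be the thinning-and-reflection step together with the proof of Claim \ref{separating-pair-compatibility}: one must extract from $\kappa$ arbitrary conditions a separating pair with all the coherence demanded above, and whose reflected behaviour below the common height $\delta$ faithfully records enough of the forcing relations to run the amalgamation --- and it is exactly here that the weak compactness of $\kappa$, and the bound $\kappa^+$ on the length of the iteration, are brought to bear. By contrast, the verification that the amalgamation is a condition, while long, is essentially bookkeeping organised around Lemmas \ref{high-amalg}, \ref{orbits}, \ref{simplifying-amalg}, \ref{closure-sym}, \ref{transfer}, \ref{adeq-sym}, \ref{str-props}, \ref{pre-separating-pair-compatibility} and the inductive hypothesis.
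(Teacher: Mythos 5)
There is a genuine gap, and it sits exactly where you have placed the least detail: the incomparability requirements for the amalgamation. Your plan is to verify clause (4) for $q_0\oplus q_1$ using only the adequacy of $q_0$ and $q_1$ separately, Lemmas \ref{transfer} and \ref{adeq-sym}, and then ``an appeal to clause (7) \ldots to actually realize the required incomparabilities in a further extension''. But adequacy (clause (1)) only separates two nodes lying in the working part of \emph{one and the same} condition at a given stage; it says nothing about a node $x\in f_{q_0}(\a)$ and a node $y\in f_{q_1}(\a')$ that the copying requirement (clause (6)) forces into a common $f(\bar\a)$ of the amalgam. For such a cross pair nothing guarantees that the amalgamated restriction forces $x$ and $y$ incomparable in $\lusim{T}_{\m(\bar\a)}$ --- it may well be compatible with, or even force, their comparability --- and one cannot repair this by passing to a further extension, since clause (6) puts both nodes into $f(\bar\a)$ of \emph{every} common extension, so clause (4) must already hold of the pair as given. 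This is precisely why the paper does not reduce to your notion of ``separating pair'': its Definition \ref{separating-def} builds in, via clauses (2) and (4), that every relevant cross pair of nodes above the critical levels is \emph{separated below $\l$}, i.e.\ has distinct $\lusim{T}$-predecessors at a common level $\bar\r<\l$ decided by the restrictions, so that incomparability is forced and can be transported along threads by Lemma \ref{transfer}. Producing such separation is the Laver--Shelah core of the argument (Claim \ref{main-technical-fact}): one uses a $\Pi^1_1$-reflection to find $\l$ with $M_\l$ reflecting ``$\lusim{T}_{\m(\a)}\cap M_\l$ has no $\l$-branch'' and $\MQB^\ast_\a\cap M_\l\lessdot\MQB^\ast_\a$, and then the no-branch property yields two extensions deciding distinct predecessors at a common level. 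Your proposal mentions weak compactness only as supplying ``reflected behaviour \ldots of the forcing relations'', and never this node-separation mechanism, which is the missing idea.

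A second, related structural gap: the paper's induction is not on the bare $\k$-c.c.\ but on the conjunction $(1)_\b$ (the \emph{strong} $\k$-c.c., a statement about pairs of sequences of adequate conditions indexed by weak-compactness-positive sets) and $(2)_\b$ (the $\l$-compatibility/quotient statement). The auxiliary statement $(2)_\a$, for $\a<\b$, is used $\o$-many times, together with Claim \ref{main-technical-fact} and normality of $\mtcl F$ plus a pressing-down argument, to build the separating pair (Claim \ref{there is a separating pair}); your plain $\k$-c.c.\ induction and a single $\Delta$-system-style thinning over elementary submodels does not provide the quotient-compatibility needed to run the two-step-iteration argument inside $M_\l$. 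Finally, your device of adjoining a new edge linking a copy of $q_0$ to $N$ before amalgamating is not how the chain-condition proof proceeds (the common extension is just $r^0_{\l_0}\oplus r^1_{\l_1}$); adjoining edges is the technique of Lemma \ref{preserving ch}, and it rests on Lemma \ref{technical-lemma2}, whose hypotheses include the $\k$-c.c.\ of all $\MQB_\b$ --- so invoking it here would be circular.
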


As we will see, the weak compactness of $\k$ is used crucially in order to prove Lemma \ref{chain condition theorem}.
Let $\mathcal F$ be the weak compactness filter on $\k$, i.e., the filter on $\k$ ge\-ne\-rated by the sets $$\{\l<\k\,\mid\, (V_\l, \in, B\cap V_\l)\models\psi\},$$ where $B\subseteq V_\k$ and where $\psi$ is a  $\Pi^1_1$ sentence for the structure $(V_\k, \in, B)$ such that $(V_\k, \in, B) \models \psi$. $\mathcal F$ is a proper normal filter on $\kappa$. Let also $\mtcl S$ be the collection of $\mathcal F$-positive subsets of $\k$, i.e., $$\mathcal S=\{X\subseteq\k\,\mid\, X\cap C\neq \emptyset\mbox{ for all }C\in\mtcl F\}$$

We will call a model $Q$ \emph{suitable} if $Q$ is an elementary submodel of cardinality $\k$ of some high enough $H(\theta)$, closed under ${<}\k$-sequences, and such that $\langle \MQB_\a\mid \a<\k^+\rangle\in Q$. Given a suitable model $Q$, a bijection $\varphi:\k\to Q$, and an ordinal $\l<\k$, we will denote $\varphi``\l$ by $M^\varphi_\l$. It is easily seen that $$\{\l < \k \mid M^\varphi_\l\prec Q,~ M^\varphi_\l \cap \k=\l \text{~and~} ^{<\l}M^\varphi_\l \subseteq M^\varphi_\l               \} \in \mathcal F.$$

\begin{definition}[strong chain condition]
\label{strongchainconditiondef}
Given $\b\leq\k^+$, we will say that \emph{$\MQB_\b$ has the strong $\k$-chain condition} if for every $X\in\mtcl S$, every suitable model $Q$ such that $\b, X\in Q$, every bijection $\varphi:\k\to Q$, and every two sequences $(q^0_\l\mid\l\in X)\in Q$ and $(q^1_\l\mid \l\in X)\in Q$ of adequate $\MQB_\b$-conditions, if
\begin{itemize}
\item  $M^\varphi_\l\cap\k=\l$ and
\item $q^0_\l\restriction M^\varphi_\l = q^1_\l\restriction M^\varphi_\l$ for every $\l\in X$,
\end{itemize}
\noindent then there is some $Y\in\mtcl S$, $Y\subseteq X$, together with sequences $$(r^{0}_\l\mid \l\in Y)$$ and $$(r^{1}_\l\mid\l\in Y)$$ of adequate $\MQB_\b$-conditions with the following properties.

\begin{enumerate}

\item $r^{0}_\l\leq_{\MQB_\b} q^0_\l$ and $r^{1}_\l\leq_{\MQB_\b} q^1_\l$  for every $\l\in Y$.
\item For all $\l_0<\l_1$ in $Y$, $r^{0}_{\l_0}\oplus r^{1}_{\l_1}$ is a common extension of $r^{0}_{\l_0}$ and $r^{1}_{\l_1}$.
\end{enumerate}
\end{definition}
The following lemma is an immediate consequence of Lemma \ref{adequate}.

\begin{lemma}\label{cc} For every $\b\leq\k^+$, if $\MQB_\b$ has the strong $\k$-chain condition, then $\MQB_\b$ has the $\k$-chain condition.
\end{lemma}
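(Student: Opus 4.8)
The plan is to derive a contradiction from the assumption that $\MQB_\b$ carries an antichain of size $\k$, by applying the strong $\k$-chain condition to two \emph{identical} sequences of conditions, so that the agreement requirement in its hypothesis becomes vacuous. First I would use Lemma \ref{adequate}: replacing each member of a putative antichain of size $\k$ by an adequate extension still yields an antichain (any common extension of the new conditions extends the corresponding old ones), and its members are still pairwise distinct, so it still has size $\k$. Thus I may fix an injective enumeration $(a_\x\mid\x<\k)$ of an antichain of \emph{adequate} $\MQB_\b$-conditions.

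Next I would assemble the data to feed into the strong $\k$-chain condition. By a routine closure argument (using $\k^{{<}\k}=\k$, as $\k$ is inaccessible) I get a suitable model $Q$ with $\b$, $\langle\MQB_\a\mid\a<\k^{+}\rangle$, $(a_\x\mid\x<\k)\in Q$ and with $\k\subseteq Q$. For the index set I would take $X=\{\l<\k\mid\l\text{ additively indecomposable}\}$; this is definable, so $X\in Q$, and it is a club, so $X\in\mathcal F$ (as $\mathcal F$ extends the club filter on $\k$) and hence $X\in\mtcl S$. The one point needing a little attention is the bijection $\varphi\colon\k\to Q$, because the hypothesis of the strong $\k$-chain condition demands $M^\varphi_\l\cap\k=\l$ for \emph{every} $\l\in X$ --- in particular one cannot take $X=\k$. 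I would obtain a suitable $\varphi$ by fixing a partition of $\k$ into two cofinal pieces and letting $\varphi$ interleave an increasing enumeration of $Q\cap\k$ (which, since $\k\subseteq Q$, is just the identity on $\k$) with an enumeration of $Q\setminus\k$; a short ordinal-arithmetic check then gives $M^\varphi_\l\cap\k=\l$ at every additively indecomposable $\l<\k$. This choice of $X$ and $\varphi$ is essentially the only non-routine ingredient in the proof, and it is where I expect the (minor) obstacle to lie.

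Finally I would apply the strong $\k$-chain condition to $X$, $Q$, $\varphi$, and the two sequences $q^0_\l=q^1_\l=a_\l$ ($\l\in X$). Both sequences lie in $Q$, consist of adequate conditions, and satisfy $q^0_\l\restriction M^\varphi_\l=q^1_\l\restriction M^\varphi_\l$ for all $\l\in X$ trivially, since they are equal; and $M^\varphi_\l\cap\k=\l$ on $X$ by the choice of $\varphi$. The conclusion provides $Y\in\mtcl S$ with $Y\subseteq X$ (so $|Y|=\k$) and sequences $(r^0_\l\mid\l\in Y)$, $(r^1_\l\mid\l\in Y)$ of $\MQB_\b$-conditions with $r^0_\l\leq_{\MQB_\b}a_\l$, $r^1_\l\leq_{\MQB_\b}a_\l$, and such that $r^0_{\l_0}\oplus r^1_{\l_1}$ is a common extension of $r^0_{\l_0}$ and $r^1_{\l_1}$ whenever $\l_0<\l_1$ are in $Y$. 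Picking any $\l_0<\l_1$ in $Y$, the condition $r^0_{\l_0}\oplus r^1_{\l_1}$ extends both $a_{\l_0}$ and $a_{\l_1}$, so $a_{\l_0}$ and $a_{\l_1}$ are compatible; since they are distinct members of our antichain, this is a contradiction, and $\MQB_\b$ has the $\k$-chain condition. Apart from the set-up in the previous paragraph the argument is entirely formal --- the real content is just the observation that taking both input sequences to be the same list trivializes the agreement clause and leaves only the compatibility conclusion.
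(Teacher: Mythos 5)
Your proposal is correct and is essentially the argument the paper leaves implicit when it calls the lemma an immediate consequence of Lemma \ref{adequate}: refine a putative antichain of size $\k$ to adequate conditions and apply the strong $\k$-chain condition to two copies of the same sequence, so that the compatibility conclusion contradicts pairwise incompatibility. Your extra care in choosing a definable club $X$ (so that $X\in Q$) and an interleaved bijection $\varphi$ guaranteeing $M^\varphi_\l\cap\k=\l$ for all $\l\in X$ is a legitimate way to discharge the hypotheses of the definition, and the rest checks out.
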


Following \cite{golshani-hayut}, given $\b\leq\k^+$, a suitable model $Q$ such that $\b\in Q$, a bijection $\varphi:\k\to Q$, $\l<\k$, and a $\MQB_\b$-condition $q\in Q$, let us say that \emph{$q$ is $\lambda$-compatible with respect to $\varphi$ and $\b$} if, letting $\MQB^*_\b=\MQB_\b\cap Q$, we have that
\begin{itemize}
\item $\MQB^*_\beta\cap M^\varphi_\lambda\lessdot\MQB^\ast_\b$,
\item $q\restriction M^\varphi_\l\in \MQB_\b^\ast$, and
\item $q \restriction M^\varphi_\lambda$ forces in $\MQB^*_\beta \cap M^\varphi_\lambda$ that
$q$ is in the quotient forcing $\MQB^*_\beta / \dot G_{\MQB^*_\beta \cap M^\varphi_\lambda}$; equivalently, for every $r\in\MQB^*_\b \cap M^\varphi_\lambda$, if $r \leq_{\MQB^*_\b \cap M^\varphi_\lambda} q \restriction M^\varphi_\lambda$, then $r$ is compatible with $q$.\footnote{In \cite{laver-shelah}, this situation is denoted by $\ast^\beta_\lambda(q_0, q_0 \restriction M^\varphi_\lambda)$.}
\end{itemize}

Adopting the approach from \cite{laver-shelah}, rather than proving Lemma \ref{chain condition theorem} we will prove the following more informative lemma.

\begin{lemma}\label{chain condition lemma} The following holds for every $\beta< \kappa^+$.
\begin{enumerate}
\item[$(1)_\beta$] $\MQB_\b$ has the strong $\k$-chain condition.
\item[$(2)_\beta$] Suppose $D\in\mtcl F$, $Q$ is a suitable model, $\b$, $D\in Q$, $\varphi:\k\to Q$ is a bijection, and $(q^0_\l\mid\l\in D)\in Q$ and $(q^1_\l\mid \l\in D)\in Q$ are sequences of $\MQB_\b$-conditions. Then there is some $D'\in\mtcl F$, $D'\subseteq D$, such that for every $\l\in D'$ and for all $q^{0'}_\l\leq_{\MQB_\b} q^0_\l$ and $q^{1'}_\l\leq_{\MQB_\b} q^1_\l$, if $q^{0'}_\l\restriction M^\varphi_\l\in \MQB_\b$ and $q^{0'}_\l\restriction M^\varphi_\l = q^{1'}_\l\restriction M^\varphi_\l$, then there are conditions $r^0_\l\leq_{\MQB_\b} q^{0'}_\l$ and $r^{1}_\l\leq_{\MQB_\b} q^{1'}_\l$ such that
\begin{enumerate}
\item $r^{0}_\l\restriction M^\varphi_\l = r^{1}_\l\restriction M^\varphi_\l$ and
\item $r^{0}_\l$ and $r^{1}_\l$ are both $\l$-compatible with respect to $\varphi$ and $\beta$.
\end{enumerate}
\end{enumerate}
\end{lemma}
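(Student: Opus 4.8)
The plan is to prove $(1)_\beta$ and $(2)_\beta$ simultaneously by induction on $\beta<\kappa^+$, following the scheme of Laver--Shelah. For a fixed $\beta$, I would assume $(1)_{\bar\beta}$ and $(2)_{\bar\beta}$ for all $\bar\beta<\beta$. The deduction of $(1)_\beta$ from $(2)_\beta$ (and the induction hypothesis) is where the weak compactness of $\kappa$ enters: given $X\in\mathcal S$, a suitable $Q$ with $\beta,X\in Q$, a bijection $\varphi:\kappa\to Q$, and sequences $(q^0_\lambda\mid\lambda\in X),(q^1_\lambda\mid\lambda\in X)\in Q$ of adequate conditions agreeing on $M^\varphi_\lambda$, I would first apply $(2)_\beta$ to the filter set $D\in\mathcal F$ of those $\lambda$ for which $M^\varphi_\lambda\prec Q$, $M^\varphi_\lambda\cap\kappa=\lambda$ and ${}^{<\lambda}M^\varphi_\lambda\subseteq M^\varphi_\lambda$, getting $D'\in\mathcal F$ and, for $\lambda\in X\cap D'$, extensions $r^0_\lambda\leq q^0_\lambda$, $r^1_\lambda\leq q^1_\lambda$ that agree on $M^\varphi_\lambda$ and are $\lambda$-compatible with respect to $\varphi$ and $\beta$. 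Since $X\in\mathcal S$, $Y:=X\cap D'\in\mathcal S$. It then remains to find $Y'\subseteq Y$, $Y'\in\mathcal S$, along which any $r^0_{\lambda_0}$ and $r^1_{\lambda_1}$ ($\lambda_0<\lambda_1$) amalgamate; here one $\Pi^1_1$-reflects the relevant statement about $Q$ (coded into a predicate $B\subseteq V_\kappa$ via $\varphi$ and the sequence $\langle\MQB_\alpha\mid\alpha<\kappa^+\rangle$) and uses normality of $\mathcal F$ to thin $Y$ so that for $\lambda_0<\lambda_1$ in $Y'$ one has $M^\varphi_{\lambda_0}\in M^\varphi_{\lambda_1}$, $r^0_{\lambda_0}\in M^\varphi_{\lambda_1}$, and the $\lambda_1$-compatibility of $r^1_{\lambda_1}$ lets $r^1_{\lambda_1}$ see $r^0_{\lambda_0}$ as lying in the quotient; that $r^0_{\lambda_0}\oplus r^1_{\lambda_1}$ is then a genuine common extension should follow from Lemma~\ref{pre-separating-pair-compatibility} together with the clauses (5)--(7) in the definition of condition and adequacy (via Lemmas~\ref{closure-sym}, \ref{simplifying-amalg}, \ref{transfer}, \ref{adeq-sym}).

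The core of the argument is $(2)_\beta$, which I would prove by a secondary induction on $\beta$ (using $(1)_{\bar\beta}$, $(2)_{\bar\beta}$ for $\bar\beta<\beta$). The base case $\beta=1$ is essentially the L\'evy collapse, which is ${<}\kappa$-closed above the relevant models and trivially $\lambda$-compatible on a filter set. For $\beta=\alpha+1$: given $(q^0_\lambda),(q^1_\lambda)\in Q$, choose $D'\subseteq D$ by $(2)_\alpha$ applied to the restrictions $q^i_\lambda\restriction\alpha$, further shrinking inside $\mathcal F$ so that for $\lambda\in D'$ we have $\MQB^*_\beta\cap M^\varphi_\lambda\lessdot\MQB^*_\beta$ (this is a reflection of the fact, over $Q$, that $\langle\MQB_\alpha\mid\alpha\leq\beta\rangle$ is an iteration with $\sigma$-closure, using Lemma~\ref{compl}) and so that all heights of models occurring in edges of $q^0_\lambda,q^1_\lambda$ are $<\lambda$. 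Now fix $\lambda\in D'$ and extensions $q^{0'}_\lambda\leq q^0_\lambda$, $q^{1'}_\lambda\leq q^1_\lambda$ agreeing on $M^\varphi_\lambda$ with $q^{0'}_\lambda\restriction M^\varphi_\lambda\in\MQB_\beta$. By $(2)_\alpha$ I first get $s^0\leq q^{0'}_\lambda\restriction\alpha$, $s^1\leq q^{1'}_\lambda\restriction\alpha$ agreeing on $M^\varphi_\lambda$ and $\lambda$-compatible w.r.t.\ $\varphi$ and $\alpha$; use Lemma~\ref{compl} to lift these back to $\MQB_\beta$-conditions $r^{0,-}_\lambda,r^{1,-}_\lambda$ below $q^{0'}_\lambda,q^{1'}_\lambda$. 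The remaining work is to repair clauses (6) and (7) at stage $\alpha$: if some edge $\langle(N_0,\alpha+1),(N_1,\gamma_1)\rangle$ in $\tau_{r^{0,-}_\lambda}$ with $\delta_{N_0}\geq\lambda$ demands copying of $f_{r^{0,-}_\lambda}(\alpha)$-nodes across $\Psi_{N_0,N_1}$, one invokes clause (7) / Lemma~\ref{str-props} / Lemma~\ref{technical-lemma2} to produce an extension that achieves $f(\alpha)\cap(\delta\times\omega_1)\subseteq f(\bar\alpha)$ while keeping $T_{\mu(\alpha)}$-incomparability, and one does this over $\omega$ steps taking greatest lower bounds (Lemma~\ref{countable closure}) and re-adequating via Lemma~\ref{adequate} at each step, exactly as in the proof of Lemma~\ref{adequate}; the outcome is $r^0_\lambda,r^1_\lambda$ as required, with $\lambda$-compatibility at stage $\alpha+1$ following from $\lambda$-compatibility at stage $\alpha$ and the fact that the new nodes and edges added at stage $\alpha$ lie, for $\lambda$-compatibility purposes, ``inside'' what the quotient already sees (using that $\MQB^*_\beta\cap M^\varphi_\lambda\lessdot\MQB^*_\beta$ and Lemma~\ref{high-amalg}).

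For $\beta$ a limit ordinal, if $\mathrm{cf}(\beta)=\omega$ I would fix $(\beta_i)_{i<\omega}$ cofinal in $\beta$, apply $(2)_{\beta_i}$ repeatedly to build, for $\lambda$ in a filter set, a decreasing $\omega$-sequence of conditions whose successive restrictions to $\beta_i$ are $\lambda$-compatible and mutually agreeing on $M^\varphi_\lambda$, and take greatest lower bounds by Lemma~\ref{countable closure}; the limit condition is $\lambda$-compatible because $\MQB^*_\beta\cap M^\varphi_\lambda$ is the direct limit of the $\MQB^*_{\beta_i}\cap M^\varphi_\lambda$ and Lemma~\ref{pre-separating-pair-compatibility} governs amalgamation at $\beta$. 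If $\mathrm{cf}(\beta)>\omega$, then since each $q^i_\lambda\in Q$ has countable support and countable $\tau$, there is $\bar\beta<\beta$ with $\bar\beta\in Q$ bounding all supports and markers $<\beta$ of all the $q^i_\lambda$; apply $(2)_{\bar\beta}$ and observe that the part of a condition living at stages in $[\bar\beta,\beta)$ is just a set of edges with markers $=\beta$, which contributes nothing to incompatibility and is handled verbatim as in the successor argument (Lemma~\ref{str-props}, Lemma~\ref{adequate}). The main obstacle, and the place deserving the most care, is the successor step: keeping track, through the $\omega$-step adequacy-restoration construction at stage $\alpha$, of the precise bookkeeping that guarantees both that agreement on $M^\varphi_\lambda$ is preserved and that $\lambda$-compatibility is not destroyed when clause (7) forces new nodes into $f(\alpha)$ and thence, via edges, into other coordinates $\bar\alpha<\alpha$; this is exactly the interplay of Lemmas~\ref{str-props}, \ref{adequate}, and \ref{technical-lemma2} with the reflection $\MQB^*_\beta\cap M^\varphi_\lambda\lessdot\MQB^*_\beta$, and it is where the ${<}\kappa$-closure of the models $M^\varphi_\lambda$ and the $\sigma$-closure of the iteration are both indispensable.
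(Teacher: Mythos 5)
There is a genuine gap, and it sits exactly at the point you dispatch in one sentence: that $r^{0}_{\lambda_0}\oplus r^{1}_{\lambda_1}$ ``should follow from Lemma~\ref{pre-separating-pair-compatibility} together with clauses (5)--(7) and adequacy.'' Lemma~\ref{pre-separating-pair-compatibility} only reduces the problem to successor stages; it does nothing about the real obstruction, which is clause (4) of the definition of condition: when the two conditions are amalgamated, the copying along $\tau_{r^0_{\lambda_0}}\oplus\tau_{r^1_{\lambda_1}}$-threads can place a node $x\in f_{r^{0}_{\lambda_0}}(\alpha)$ and a node $y\in f_{r^{1}_{\lambda_1}}(\alpha')$, both of level $\geq\lambda_0$, into the same coordinate $\bar\alpha$, and nothing in $\lambda$-compatibility, adequacy, or Lemmas~\ref{closure-sym}, \ref{simplifying-amalg}, \ref{transfer}, \ref{adeq-sym} forces these two nodes to be incomparable in $\lusim{T}_{\mu(\bar\alpha)}$. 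Note also that $\lambda_1$-compatibility of $r^{1}_{\lambda_1}$ only says that $r^{1}_{\lambda_1}$ is compatible with every condition of $\MQB^*_\beta\cap M_{\lambda_1}$ extending $r^{1}_{\lambda_1}\restriction M_{\lambda_1}$; the condition $r^{0}_{\lambda_0}$ need not extend that restriction, and even mere compatibility would not give that the specific amalgamation $\oplus$ is a condition, which is what the strong $\kappa$-chain condition demands. The paper closes this gap with the separation machinery that your proposal never builds: Definition~\ref{separating-def} (in particular its matching clause (4), obtained by a pressing-down argument), Claim~\ref{main-technical-fact} --- whose proof is the Laver--Shelah step, using a $\Pi^1_1$-reflection to get $\lambda$ such that $\lusim{T}_{\mu(\alpha)}\cap M_\lambda$ has no $\lambda$-branches and then producing, inside the two-step iteration $(\MQB^*_{\alpha_\lambda}\cap M_\lambda)\ast\lusim{\MSB}$, two distinct candidate predecessors of $x_\lambda$ at a common level $\bar\rho<\lambda$ so that one of them must differ from the chosen predecessor of $y_\lambda$ --- and Claim~\ref{there is a separating pair}, which interleaves $\omega$-many applications of Claim~\ref{main-technical-fact} with $(2)_\alpha$ for $\alpha<\beta$. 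Only after the conditions have been pre-extended to a separating pair does the long argument of Claim~\ref{separating-pair-compatibility} (with Subclaims~\ref{shortness} and \ref{copies-of-w-part}) verify clause (4), and then clause (7), for the amalgamation. Without this, your $Y'$ simply does not exist by the means you list: the weak compactness of $\kappa$ must be used not just to reflect complete-suborder facts but to kill potential branches of the tree names below $\lambda$, and that use is absent from your outline.

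Two smaller points. First, invoking $(2)_\beta$ inside the proof of $(1)_\beta$ is at best delicate: in the paper $(1)_\beta$ uses only $(2)_\alpha$ for $\alpha<\beta$, precisely because the successor case of $(2)_\beta$ is proved \emph{by contradiction using} $(1)_\beta$ (stabilizing, via normality and a $\Pi^1_1$-reflection, a small maximal antichain $A_\lambda$ of conditions incompatible with $q^{0'}_\lambda$ or $q^{1'}_\lambda$, and then amalgamating $q^{0'}_{\lambda}$ with $q^{1'}_{\lambda^*}$ to contradict the stabilized pattern). Your direct ``repair clauses (6) and (7) at stage $\alpha$'' construction for $(2)_\beta$ would have to produce, by hand, conditions $r^0_\lambda, r^1_\lambda$ such that \emph{every} extension of $r^0_\lambda\restriction M_\lambda$ in $\MQB^*_\beta\cap M_\lambda$ is compatible with them; nothing in your sketch explains how the $\omega$-step adequacy-restoration achieves this universally quantified statement, and the paper deliberately avoids attempting it directly. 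Second, in the uncountable-cofinality limit case of $(2)_\beta$ the tail of a condition above $\bar\beta$ is not ``just edges with markers $=\beta$''; the paper instead picks $\beta_i=\sup(M_\lambda\cap\beta)$ when $\cf(\beta)=\kappa$ and argues via a diagonal intersection, so this step also needs more care than your outline gives it.
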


\begin{corollary} $\MQB_{\k^+}$ has the $\k$-c.c. \end{corollary}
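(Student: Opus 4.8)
The plan is to derive this from Lemma \ref{chain condition lemma}, which only speaks about stages $\b<\k^+$, together with the observation that $\MQB_{\k^+}=\bigcup_{\b<\k^+}\MQB_\b$. So the first thing I would do is establish this union identity. Fix $q=(f_q,\tau_q)\in\MQB_{\k^+}$. By clause (1) in the definition of a condition, $\dom(f_q)$ is a countable subset of $\k^+$, and by clause (2), $\tau_q$ is a countable set of edges below $\k^+$; in particular the set of markers occurring in members of $\tau_q$ is a countable subset of $\k^+$, since each edge has two markers and every marker is, by definition, an ordinal $<\k^+$. As $\cf(\k^+)>\o$, we may pick $\b^*<\k^+$ with $\dom(f_q)\subseteq\b^*$ and with $\g_0,\g_1\leq\b^*$ for every edge $\langle (N_0,\g_0),(N_1,\g_1)\rangle\in\tau_q$. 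Then $q\restriction\b^*=q$: clearly $f_q\restriction\b^*=f_q$, and for each $e=\langle (N_0,\g_0),(N_1,\g_1)\rangle\in\tau_q$, since $\g_0\leq\b^*$ and $\g_0\in\cl(N_0)$ we get $\min\{\b^*,\g_0\}_{N_0}=\g_0$, and likewise $\min\{\b^*,\g_1\}_{N_1}=\g_1$, so $e\restriction\b^*=e$, whence $\tau_q\restriction\b^*=\tau_q$. By clause (3) in the definition of a $\MQB_{\k^+}$-condition, applied with $\a=\b^*$, we have $q\restriction\b^*\in\MQB_{\b^*}$, and therefore $q\in\MQB_{\b^*}$. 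The reverse inclusion $\MQB_\b\subseteq\MQB_{\k^+}$ for $\b<\k^+$ is Lemma \ref{limit}.

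Next I would run the standard argument. Let $A\subseteq\MQB_{\k^+}$ with $|A|=\k$. By the previous paragraph, each $q\in A$ belongs to $\MQB_{\b_q}$ for some $\b_q<\k^+$; since $|A|\leq\k<\cf(\k^+)$, the ordinal $\b^*:=\sup\{\b_q\mid q\in A\}$ is $<\k^+$, and then $A\subseteq\MQB_{\b^*}$ by Lemma \ref{limit}. By clause $(1)_{\b^*}$ of Lemma \ref{chain condition lemma}, $\MQB_{\b^*}$ has the strong $\k$-chain condition, hence the $\k$-chain condition by Lemma \ref{cc}. Thus there are distinct $q_0,q_1\in A$ with a common extension $r$ in $\MQB_{\b^*}$. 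By Lemma \ref{limit}, $r\in\MQB_{\k^+}$, and since the extension relation is given by the same three clauses in each $\MQB_\b$, $r$ is a common extension of $q_0$ and $q_1$ in $\MQB_{\k^+}$ as well. Hence $A$ is not an antichain, and $\MQB_{\k^+}$ has the $\k$-chain condition.

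The only point that needs any care is the union identity $\MQB_{\k^+}=\bigcup_{\b<\k^+}\MQB_\b$; this is precisely where the length of the iteration being exactly $\k^+$, together with the countability of both components $f_q$ and $\tau_q$ of a condition, is used. Once it is in place, the corollary is immediate from Lemmas \ref{chain condition lemma}, \ref{cc} and \ref{limit}, so there is no genuine obstacle remaining here — all of the combinatorial difficulty has already been absorbed into the proof of Lemma \ref{chain condition lemma}. (As a byproduct, combining this corollary with the instances of the $\k$-chain condition for $\b<\k^+$ yields Lemma \ref{chain condition theorem} in full.)
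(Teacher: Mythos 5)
Your proof is correct and follows essentially the same route as the paper: bound the given $\k$-many conditions in some $\MQB_{\b^*}$ with $\b^*<\k^+$ (using countability of $f_q$ and $\tau_q$), then apply $(1)_{\b^*}$ of Lemma \ref{chain condition lemma} together with Lemma \ref{cc} and transfer compatibility up via Lemma \ref{limit}. The only cosmetic difference is that the paper first passes to adequate conditions via Lemma \ref{adequate} before invoking Lemma \ref{cc}, whereas you leave that reduction implicit inside Lemma \ref{cc}; your explicit verification that every $\MQB_{\k^+}$-condition lies in some $\MQB_{\b}$, $\b<\k^+$, is exactly the point the paper treats as immediate.
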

\begin{proof} Suppose $q_i$, for $i<\k$, are conditions in $\MQB_{\k^+}$. By Lemma \ref{adequate}, we may assume that each $q_i$, for $i<\k$,
 is  adequate. We may then fix $\b<\k^+$ such that $q_i\in\MQB_\b$ for all $i<\k$. But by Lemma \ref{chain condition lemma} $(1)_\b$ together with Lemma \ref{cc} there are $i\neq i'$ in $\k$ such that $q_i$ and $q_{i'}$ are compatible in $\MQB_\b$ and hence in $\MQB_{\k^+}$.
\end{proof}

The rest of the section is devoted to proving the above lemma.

\begin{proof} (of Lemma \ref{chain condition lemma}) The proof is by induction on $\b$. Let $\b< \k^+$ and suppose $(1)_\a$ and $(2)_\a$ hold for all $\a<\b$. We will show that $(1)_\b$ and $(2)_\b$ hold as well.

There is nothing to prove for $\b=0$, and the case $\b=1$ is trivial, using the inaccessibility of $\k$ and the fact that $\MQB_1$
is essentially the L\'{e}vy collapse turning $\k$ into $\aleph_2$.

Let us proceed to the case when $\b>1$.
 We start with the proof of $(1)_\b$.

Let $X\in\mtcl S$ be given, together with a suitable model $Q$ such that
 $\b, X\in Q$, a bijection $\varphi:\k\to Q$, and sequences $$\vec\s_0=(q^0_\l\mid\l\in X)\in Q$$ and $$\vec\s_1=(q^1_\l\mid \l\in X)\in Q$$ of adequate $\MQB_\b$-conditions such that $$M^\varphi_\l \cap \k=\l$$ and $$q^0_\l\restriction M^\varphi_\l = q^1_\l\restriction M^\varphi_\l$$ for every $\l\in X$. We need to prove that there is some $Y\in\mtcl S$, $Y\subseteq X$, together with sequences $$(r^{0}_\l\mid \l\in Y)$$ and $$(r^{1}_\l\mid\l\in Y)$$ of $\MQB_\b$-conditions such that the following holds.

\begin{enumerate}

\item $r^{0}_\l\leq_{\MQB_\b} q^0_\l$ and $r^{1}_\l\leq_{\MQB_\b} q^1_\l$ for every $\l\in Y$.
\item For all $\l_0<\l_1$ in $Y$, $r^{0}_{\l_0}\oplus r^{1}_{\l_1}$ is a common extension of $r^{0}_{\l_0}$ and $r^{1}_{\l_1}$.
\end{enumerate}
 We note that $\b+1\subseteq Q$. In what follows, we will write $M_\l$ instead of $M^\varphi_\l$.

Let $\MQB_\a^\ast =\MQB_\a\cap Q$ for every $\a\in \b+1$. By the induction hypothesis, $\MQB_\a$ has the $\k$-c.c.\ for  every $\a\in \b$. Hence, since $^{{<}\kappa} Q\subseteq Q$, we have that $\MQB_\a^\ast\lessdot \MQB_\a$ for every such $\a$; in particular, we have that for every $\a\in \mathcal X \cap \b$, $\MQB^\ast_{\a}$ forces over $V$ that $\lusim{T}_\a$ does not have $\k$-branches.

Given
\begin{itemize}
\item conditions $q^0$, $q^1$ in $\MQB_\b$,
\item nonzero stages $\a\in \dom(f_{q^0})$ and $\a'\in \dom(f_{q^1})$,\footnote{Note that, by induction hypothesis, both $\MQB_{\m(\a)}$ and $\MQB_{\m(\a')}$ have the $\k$-c.c.\ and hence $\lusim{T}_{\m(\a)}$ and $\lusim{T}_{\m(\a')}$ are both defined.}
\item nodes $x=(\rho_0, \zeta_0)$ and $y=(\rho_1, \zeta_1)$ such that $x\in f_{q^0}(\a)$ and $y\in f_{q^1}(\a')$,\footnote{$\a$ and $\a'$ may or may not be equal and the same applies to $x$ and $y$.} and
\item $\l<\k$,
\end{itemize}
\noindent we will say that \emph{$x$ and $y$ are separated below $\l$ at stages $\m(\a)$ and $\m(\a')$ by $q^0$ and $q^1$ (via $\bar x$, $\bar y$)} if there are $\bar\r<\l$ and $\zeta\neq\zeta'$ in $\o_1$ such that $\bar x=(\bar\r, \zeta)$ and $\bar y=(\bar\r, \zeta')$, and such that
\begin{enumerate}
\item $q^0\restriction\m(\a)$ extends a condition in $A^{\m(\a)}_{x, \bar\r}$ forcing $\bar x$ to be below $x$ in $\lusim{T}_{\m(\a)}$ and
\item $q^1\restriction\m(\a')$ extends a condition in $A^{\m(\a)}_{y, \bar\r}$ forcing $\bar y$ to be below $y$ in $\lusim{T}_{\m(\a')}$.
\end{enumerate}

\begin{definition}\label{separating-def} Given $Y\in\mtcl S$ such that $Y\subseteq X$ and such that $M_\l\prec Q$, $M_\l\cap\k=\l$, and $^{{<}\l} M_\l\subseteq M_\l$ for all $\l\in Y$, and given two sequences $\vec \s^\ast_{0}=(r_\l^{0}\,\mid\,\l\in Y)$, $\vec\s^\ast_{1}=(r_\l^{1}\,\mid\,\l\in Y)$ of adequate
$\MQB^\ast_{\b}$-conditions, we say that \emph{$\vec\s^\ast_{0}$, $\vec\s^\ast_{1}$ is a separating pair for $\vec\s_0$ and $\vec\s_1$} if the following holds.

\begin{enumerate}
\item For every $\l\in Y$, $r_\l^{0}\leq_{\MQB_\b} q^0_\l$, $r_\l^{1}\leq_{\MQB_\b} q^1_\l$, and $\dom(f_{r^{0}_{\l}})=\dom(f_{r^{1}_{\l}})$.

\item For every $\l\in Y$, every $\a\in\dom(f_{r_\l^{0}})\cap M_\l$, every nonzero $\a'\in \dom(f_{r_\l^1})$ such that $\a'\leq\a$, and for all $$x\in f_{r^{0}_\l}(\a)\setminus(\l\times\o_1)$$ and $$y\in f_{r^{1}_{\l}}(\a')\setminus(\l\times\o_1),$$ $x$ and $y$ are separated below $\l$ at stages $\m(\a)$ and $\m(\a')$ by $r^{0}_\l$ and $r^{1}_\l$ via some pair $\chi_0(x, y, \a, \a', \l)$, $\chi_1(x, y, \a, \a', \l)$ of nodes.

\item The following holds for all $\l_0<\l_1$ in $Y$.
\begin{enumerate}
\item $r^{0}_{\l_0}\restriction M_{\l_0}=r^{1}_{\l_1}\restriction M_{\l_1}$.
\item $r^{0}_{\l_0}\in M_{\l_1}$.
\item Let $\mtcl N_{\l_\epsilon}$ and $\Xi_{\l_\epsilon}$, for $\epsilon\in\{0, 1\}$, be defined as follows.
\begin{itemize}
\item $\mtcl N_{\l_\epsilon}$ is the union of the sets of the form $N_0\cup N_1$, where $\langle (N_0, \g_0), (N_1, \g_1)\rangle\in \tau_{r^0_{\l_\epsilon}}\cup\tau_{r^1_{\l_\epsilon}}$ and $\d_{N_0}<\l_\epsilon$.
 \item $\Xi_{\l_\epsilon}$ is the collection of ordinals of the form $\Psi_{\vec{\mtcl E}}(\bar\a)$, where $\langle\vec{\mtcl E}, (\r, \bar\a)\rangle$ is a connected $\tau_{r_{\l_\epsilon}^1}$-thread such that $(\r, \bar\a)\in N\cap (\k\times\k^+)$ for some model $N$ of height less than $\l_\epsilon$ coming from some edge in $\tau_{r_{\l_\epsilon}^1}$.
 \end{itemize}
 Then \begin{enumerate}
 \item $\mtcl N_{\l_0}\cap M_{\l_0} = \mtcl N_{\l_1}\cap M_{\l_1}$ and
 \item $\Xi_{\l_0}\cap M_{\l_0}=\Xi_{\l_1}\cap M_{\l_1}$.
 \end{enumerate}
\end{enumerate}

\item For all $\l_0<\l_1$ in $Y$, all ordinals $\a\in \dom(f_{r_{\l_0}^{0}})\cap M_{\l_0}$ and $\a'\in \dom(f_{r_{\l_1}^1})$ such that $0<\a'\leq\a$, and all nodes
$$x\in f_{r_{\l_0}^{0}}(\a)\setminus (\l_0\times\o_1)$$ and $$y'\in f_{r^{1}_{\l_1}}(\a')\setminus(\l_1\times\o_1)$$
there are
\begin{itemize}
\item a node $x'\in f_{r^{0}_{\l_1}}(\a)\setminus (\l_1\times\o_1)$,
\item a stage $\a^*\in\dom(f_{r^1_{\l_0}})$ such that $\a^*\leq\a$, and
\item a node $y\in f_{r_{\l_0}^{1}}(\a^*)\setminus (\l_0\times\o_1)$
\end{itemize}
\noindent such that
\[\chi_0(x, y, \a, \a^*, \l_0)=\chi_0(x', y', \a, \a', \l_1)\] and \[\chi_1(x, y, \a, \a^*, \l_0)=\chi_1(x', y', \a, \a', \l_1)\]
\end{enumerate}

\end{definition}

Let us now prove the following.

\begin{claim}\label{separating-pair-compatibility} Let $Y\in \mtcl S$ be such that $M_\l\cap\k=\l$ for all $\l\in Y$, and suppose $\vec\s^\ast_{0}=(r_\l^0\,\mid\,\l\in Y)$, $\vec\s^\ast_{1}=(r_\l^1\,\mid\,\l\in Y)$ is a separating pair for $\vec\s_0$ and $\vec\s_1$.
Then for all $\l_0<\l<\l_1$ in $Y$, $r_{\l_0}^{0}\oplus r_{\l_1}^{1}$ is a common extension of $r_{\l_0}^{0}$ and $r_{\l_1}^{1}$ in $\MQB_{\b}$.
\end{claim}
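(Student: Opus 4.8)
The plan is to verify that $q := r_{\l_0}^0 \oplus r_{\l_1}^1$ satisfies clauses (1)--(7) in the definition of a $\MQB_\b$-condition; once this is done, the fact that $q$ extends both $r_{\l_0}^0$ and $r_{\l_1}^1$ is immediate from the construction of $\oplus$ (the working parts only grow, and every edge of $\tau_{r_{\l_0}^0}\cup\tau_{r_{\l_1}^1}$ survives, possibly with larger markers). Clauses (1), (2), (3) are routine bookkeeping: $f_q$ is still a countable function with domain in $\b$, its values at $0$ and at nonzero stages have the right form, and $\tau_q = \tau_{r_{\l_0}^0}\oplus\tau_{r_{\l_1}^1}$ is a countable set of edges below $\b$ (countability because $\oplus$ is built in $\o$ steps from countable sets, using that the models involved are internally closed under ${<}|N|$-sequences so each copying step stays countable). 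Clauses (5) and (6)---$\tau_q$ closed under copying, and the $f_q$-copying condition along edges---hold by Lemma \ref{closure-sym}, which is exactly the statement that $q_0\oplus q_1$ satisfies these. So the real content is clauses (4) and (7).

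\textbf{Clause (4) (incomparability of nodes).} By Lemma \ref{simplifying-amalg}, any node $x=(\r,\zeta)\in f_q(\a)$ that is genuinely new---i.e.\ not already in $f_{r_{\l_0}^0}(\a)\cup f_{r_{\l_1}^1}(\a)$---arises as $\Psi_{\vec{\mtcl E}}(\a^*)=\a$ for some connected $\tau_{r_{\l_0}^0}\cup\tau_{r_{\l_1}^1}$-thread $\langle\vec{\mtcl E},(\r,\a^*)\rangle$ with $x\in f_{r_{\l_0}^0}(\a^*)\cup f_{r_{\l_1}^1}(\a^*)$; moreover, since all edges in $\tau_{r_{\l_0}^0}$ involve models of height $<\l_0<\l_1$, the ``furthermore'' clause of Lemma \ref{simplifying-amalg} tells us that every member of $\vec{\mtcl E}$ involving models of height $\geq\l_0$ is an edge of $\tau_{r_{\l_1}^1}$. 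The plan is: given two distinct nodes $x_0,x_1\in f_q(\a)$, trace each back via such threads to nodes $\bar x_i$ in one of the two original working parts, then use adequacy of $r_{\l_0}^0$ and $r_{\l_1}^1$ (clause (1) of ``adequate'') to find, for each pair of original nodes at a common stage, witnesses $r_0\in A^{\m(\cdot)}_{\cdot}$, $r_1\in A^{\m(\cdot)}_{\cdot}$ forcing incomparability, and then transport this incomparability forward along the threads via Lemma \ref{transfer}. The subtlety is when $x_0,x_1$ come from the \emph{two different} conditions---here is where the separating-pair hypotheses, especially clause (2) (the $\chi_0,\chi_1$ witnesses separating nodes \emph{below $\l_0$}) and the matching condition (4) of Definition \ref{separating-def}, do their work: they guarantee that the separating data for a pair $(x,y')$ with $x$ coming from $r_{\l_0}^0$ and $y'$ from $r_{\l_1}^1$ above $\l_1$ is \emph{already present}, via a node $x'\in f_{r_{\l_1}^1}(\a)$ and a node $y\in f_{r_{\l_0}^1}(\a^*)$, as separating data internal to a single condition (below $\l_0$), where clause (1) of adequacy applies. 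Combining these with $q\restriction\m(\a)$ extending the relevant restrictions yields the required forcing of incomparability. \textbf{I expect this step---the case analysis for clause (4) when the two nodes originate in different conditions---to be the main obstacle}, precisely because it is what all five clauses of Definition \ref{separating-def} were engineered to handle, and it is the one place where ``connectedness'' of threads (via Lemma \ref{orbits}(1)) is needed to keep the thread combinatorics under control.

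\textbf{Clause (7).} This is verified by induction on $\a<\b$ (equivalently, as the Remark after the definition of condition notes, by recursion on the sup of heights of models $N_0$ in edges of $\tau_q$). Given a generalized edge $e=\langle(N_0,\a+1),(N_1,\g_1)\rangle$ from $\tau_q\restriction\a+1$, a condition $r\in\MQB^{\d_{N_0}}_{\a+1}$ with $e\restriction\a$ coming from $\tau_r$, a sequence $\vec{\mtcl E}$ of generalized edges from $\tau_r\cup\{e\}$, and a correct thread $\langle\vec{\mtcl E},(\emptyset,\a)\rangle$, I would invoke Lemma \ref{pre-separating-pair-compatibility}---the hypothesis ``$(q_0\restriction\a)\oplus(q_1\restriction\a)\in\MQB_\a$ implies $(q_0\restriction\a+1)\oplus(q_1\restriction\a+1)\in\MQB_{\a+1}$'' is being established by this very induction, so by that lemma $q\restriction\gamma\in\MQB_\gamma$ for all $\gamma\le\b$, and in particular all the restrictions of $q$ occurring in clause (7) are genuine conditions. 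The required extension $r^*$ is then produced from the corresponding clause (7) for whichever of $r_{\l_0}^0$, $r_{\l_1}^1$ supplies the relevant part of the edge $e$, together with Lemma \ref{str-props} (to realize the copying $f_{r^*}(\a)\cap(\d\times\o_1)\subseteq f_{r^*}(\bar\a)$) and adequacy to guarantee that the nodes being copied over are already pairwise incomparable in the relevant $\lusim{T}_{\m(\a)}$. Finally, $\sigma$-closure (Lemma \ref{countable closure}) handles the passage through $\o$-many copying rounds inside the $\oplus$ construction, and the proof of clause (3) at limit stages below $\b$ is exactly the greatest-lower-bound argument used in the proof of Lemma \ref{pre-separating-pair-compatibility}.
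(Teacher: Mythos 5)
Your high-level strategy does match the paper's: reduce, via Lemma \ref{pre-separating-pair-compatibility}, to the successor step (if $(r^{0}_{\l_0}\restriction\a)\oplus(r^{1}_{\l_1}\restriction\a)$ is a condition then so is the restriction to $\a+1$), with the real content in clauses (4) and (7). But the step you yourself flag as the main obstacle is exactly where the proposal stops being a proof, and the missing ingredients are substantive. You never use, or explain, the intermediate $\l$ with $\l_0<\l<\l_1$; in the paper it is indispensable: after tracing the two nodes back via Lemma \ref{simplifying-amalg}, one repeatedly needs that a connected thread all of whose models have height at least $\l_1$ cannot move an ordinal of $M_\l$ to a strictly smaller ordinal of $M_\l$ (Subclaim \ref{shortness}, proved from Fact \ref{agreement} and closure under $\vec e$, using a copy of $M_\l$ inside the first model of the thread). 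Without this, the cases in which both traced-back stages lie in $M_\l$ and the connecting threads consist of edges over models of height $\geq\l_1$ cannot be closed, and the reduction of a ``mixed'' pair of nodes to a pair already separated inside a single condition---where clause (4) of Definition \ref{separating-def} with its $\chi_0,\chi_1$ matching applies and Lemma \ref{transfer} transports the separating conditions to stage $\m(\bar\a)$---does not go through. One also needs the classification of where a copied node at the offending stage $\bar\a$ can originate (Subclaim \ref{copies-of-w-part}), which rests on clauses (3)(a)--(c) of Definition \ref{separating-def} together with Lemma \ref{adeq-sym}; Lemma \ref{simplifying-amalg} alone does not provide it.

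Your treatment of clause (7) is also not right as stated. The edges of $\tau_{r^{0}_{\l_0}}\oplus\tau_{r^{1}_{\l_1}}$ are in general new, generated by the copying procedure, so there is no ``whichever of $r^{0}_{\l_0}$, $r^{1}_{\l_1}$ supplies the edge''; moreover clause (7) quantifies over arbitrary $r\in\MQB^{\d_{N_0}}_{\a+1}$ and arbitrary threads from $\tau_r\cup\{e\}$, so it cannot be read off from the two particular conditions at hand, and Lemma \ref{str-props} (whose hypotheses include adequacy of the restriction, and whose role is in the proof of Lemma \ref{adequate}) is not the relevant tool. What the paper uses instead is that, by Lemma \ref{definability}, clause (7) for an edge $e$ is a statement expressible over $(H(\k^+),\in,\Phi_{\bar\a+1})$ with $e$ as parameter; every generalized edge of the amalgamation is of the form $\Psi_{\vec{\mtcl E}}(e^*)$ for a generalized edge $e^*$ coming from $(\tau_{r^0_{\l_0}}\restriction\a^*+1)\cup(\tau_{r^1_{\l_1}}\restriction\a^*+1)$ and a suitable correct thread, the statement holds of $e^*$ because $r^0_{\l_0}$ and $r^1_{\l_1}$ are conditions, and correctness of the thread transfers it to $e$. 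This definability-and-elementarity transfer is the idea missing from your plan.
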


\begin{proof}
Suppose, towards a contradiction, that there are
$\l_0<\l<\l_1$
in $Y$ such that $r_{\l_0}^{0}\oplus r_{\l_1}^{1}$ is not a common extension of $r_{\l_0}^{0}$ and $r_{\l_1}^{1}$. It then follows that $r_{\l_0}^{0}\oplus r_{\l_1}^{1}$ is not a condition. Hence, by Lemma \ref{pre-separating-pair-compatibility}, there is an ordinal $\a<\b$ such that $$q:=(r_{\l_0}^{0}\restriction\a)\oplus (r_{\l_1}^{1}\restriction\a)$$ is a condition yet $$q^+:=(r_{\l_0}^{0}\restriction\a+1)\oplus (r_{\l_1}^{1}\restriction\a+1)$$ is not. Assuming that we are in this situation, we will derive a contradiction by proving that $q^+$ is a condition after all.

To start with, note that $\a>0$. We will need the following subclaim.

 \begin{subclaim}\label{shortness} Suppose
 \begin{enumerate}
\item  $\b_1\leq \b_0$  are ordinals in $M_{\l}$,
\item $\vec{\mtcl E}=(\langle (N^i_0, \g^i_0), (N^i_1, \g^i_1)\rangle\,\mid\,i\leq n)$ is a sequence of generalized edges coming
from $\tau_{r_{\l_1}^1}$ such that $\langle \vec{\mtcl E}, (\emptyset, \b_0)  \rangle$ is a $\tau_{r_{\l_1}^1}$-thread,
\item $\b_1=\Psi_{\vec{\mtcl E}}(\b_0)$, and
\item $\d_{N^i_0}\geq\l_1$ for all $i\leq n$.
\end{enumerate} Then $\b_1=\b_0$.
\end{subclaim}

\begin{proof}
By correctness of  $(N^0_0, \in, \Phi_0)$ within $(H(\k^+), \in, \Phi_0)$, we may pick some model $M\in N^0_0$ closed under $\vec e$ such that $\b_0\in M$, $\d_M=\l$, and $\arrowvert M\arrowvert=\l$ (since $M_\l$ is such a a model).
Given that $\b_1\leq \b_0$ are both in $M_\l$, $\d_{M_\l}=\l$, and $\b_0\in M$, by the first part of Lemma \ref{agreement} we then have that $\b_1\in M\subseteq N^0_0$.
But that means that $$(\Psi_{N^n_0, N^n_1}\circ \ldots\circ \Psi_{N^0_0, N^0_1})(\b_0) = \b_1$$ is in fact $\b_0$ since $\b_1\in N^0_0\cap N^n_1$ implies, by the second part of Lemma \ref{agreement}, that $$\b_0=(\Psi_{N^n_1, N^n_0}\circ\ldots\circ\Psi_{N^0_1, N^0_0})(\b_1)=\b_1.$$
\end{proof}

We will also be using the following subclaim.

\begin{subclaim}
\label{copies-of-w-part}
Suppose $\a^*\in \dom(f_{r_{\l_0}^{0}})\cup\dom(f_{r^{1}_{\l_1}})$, $x=(\r, \zeta)\in f_{r^{0}_{\l_0}}(\a^*)\cup f_{r^{1}_{\l_1}}(\a^*)$, $\langle \vec{\mtcl E}_*, (\r, \a^*)\rangle$ is a connected $\tau_{r^{0}_{\l_0}}\cup\tau_{r^{1}_{\l_1}}$-thread, and all members $\langle (N_0, \g_0), (N_1, \g_1)\rangle$ of $\vec{\mtcl E}_*$ such that $\d_{N_0}\geq\l_1$ are edges from $\tau_{\l_1}^1$. Then at least one of the following holds, where $\bar\a=\Psi_{\vec{\mtcl E}_*}(\a^*)$.

 \begin{enumerate}
 \item $\bar\a\in\dom(f_{r_{\l_0}^0})$ and $x\in f_{r_{\l_0}^0}(\bar\a)$.
 \item $\bar\a\in\dom(f_{r_{\l_1}^1})$ and $x\in f_{r_{\l_1}^1}(\bar\a)$.
 \item There is some $\a^{**}\in\dom(f_{r_{\l_0}^0})$ such that $x\in f_{r_{\l_0}^0}(\a^{**})$ and some connected $\tau_{r_{\l_1}^1}$-thread $\langle\vec{\mtcl E}, (\r, \a^{**})\rangle$ such that
 \begin{itemize}
 \item $\Psi_{\vec{\mtcl E}}(\a^{**})=\bar\a$ and
 \item all members of $\vec{\mtcl E}$ are edges $\langle (N_0, \g_0), (N_1, \g_1)\rangle$ such that $\d_{N_0}\geq\l_1$.
  \end{itemize}
\end{enumerate}
\end{subclaim}

\begin{proof} We prove this by induction on $|\vec{\mtcl E}_*|$, which we may obviously assume is nonzero. Let $$\vec{\mtcl E}_*=(\langle (N^i_0, \g^i_0), (N^i_1, \g^i_1)\,\mid\,i\leq m)$$ and $$e=\langle (N^m_0, \g^m_0), (N^m_1, \g^m_1)\rangle.$$ By induction hypothesis, one of (1)--(3) holds for $\langle\vec{\mtcl E}_*\restriction m, (\r, \a^*)\rangle$.\footnote{Note that $\langle\vec{\mtcl E}_*\restriction m, (\r, \a^*)\rangle$ is also a connected thread, so we may indeed apply the induction hypothesis to it. This is in contrast with the fact that it does not follow that $\langle\vec{\mtcl E}_*\restriction m, (\r, \a^*)\rangle$ is a correct thread if we just assume that $\vec{\mtcl E}_*$ is correct.}  Let $\a^\dag=\Psi_{\vec{\mtcl E}_*\restriction m}(\a^*)$.

Suppose (1) holds for $\langle\vec{\mtcl E}_*\restriction m, (\r, \a^*)\rangle$.
We have two cases. If $e\in \tau_{r_{\l_0}^0}$, then (1) holds trivially for $\langle \vec{\mtcl E}_*, (\r, \a^*)\rangle$ by
adequacy of
$r_{\l_0}^0$.
The other case is that $e\in \tau_{r_{\l_1}^1}$. If $\d_{N^m_0}\geq\l_1$, then obviously (3) holds for $\langle \vec{\mtcl E}_*, (\r, \a^*)\rangle$ as witnessed
by $\a^\dag$ and the thread $\langle (e), (\r, \a^\dag)\rangle$. We may thus assume that $\d_{N^m_0}<\l_1$. We know that $\a^\dag\in \mtcl N_{\l_1}\cap M_{\l_1}$ by clause (3)(b) in Definition \ref{separating-def} for the pair $r_{\l_1}^0$, $r_{\l_1}^1$. It follows that $\a^\dag\in \mtcl N_{\l_0}\cap M_{\l_0}$ by (3)(c), and hence $\a^\dag\in\dom(f_{r^1_{\l_1}})$ and $x\in f_{r_{\l_1}^1}(\a^\dag)$ by (3)(a). But then $\bar\a\in \dom(f_{r_{\l_1}^1})$ and $x\in f_{r_{\l_1}^1}(\bar\a)$ by
adequacy of $r_{\l_1}^1$ and so (2) holds for $\langle \vec{\mtcl E}_*, (\r, \a^*)\rangle$.

Next suppose (2) holds for $\langle\vec{\mtcl E}_*\restriction m, (\r, \a^*)\rangle$. Suppose $e\in \tau_{r_{\l_0}^0}$. Since $e\in M_{\l_1}$ by clause (3)(b) in Definition \ref{separating-def}, it follows from (3)(a) that $\a^\dag\in \dom(f_{r_{\l_0}^0})$ and $x\in f_{r_{\l_0}^0}(\a^\dag)$. But then, by
adequacy of
$r_{\l_0}^0$, $\bar\a\in \dom(r_{\l_0}^0)$ and $x\in f_{r_{\l_0}^0}(\bar\a)$. Hence (1) holds for $\langle \vec{\mtcl E}_*, (\r, \a^*)\rangle$. If $e\in \tau_{r_{\l_1}^1}$, then (2) holds trivially for $\langle\vec{\mtcl E}_*, (\r, \a^*)\rangle$, again by
adequacy of
$r_{\l_1}^1$.

Finally, suppose (3) holds for $\langle\vec{\mtcl E}_*\restriction m, (\r, \a^*)\rangle$, as witnessed by $\a^{**}\in \dom(f_{r_{\l_0}^0})$ together with a connected $\tau_{r_{\l_1}^1}$-thread $\langle\vec{\mtcl E}, (\r, \a^{**})\rangle$.
Suppose $e\in \tau_{r_{\l_0}^0}$. Then, since all models occurring in the edges in $\vec{\mtcl E}$ are of height at least $\l_1$ and since, once again by clause (3)(b) in Definition \ref{separating-def}, $r_{\l_0}^0\in M_{\l}$, we have by Subclaim \ref{shortness} that $\a^\dag=\a^{**}$. But then $\bar\a\in \dom(f_{r_{\l_0}^0})$ and $x\in f_{r_{\l_0}^0}(\bar\a)$ by
adequacy of
$r_{\l_0}^0$, and so (1) holds for $\langle\vec{\mtcl E}_*, (\r, \a^*)\rangle$. Finally, suppose $e\in r_{\l_1}^1$. If $\d_{N^m_0}\geq\l_1$, then (3) holds trivially for $\langle\vec{\mtcl E}_*, (\r, \a^*)\rangle$ as witnessed by
 $\langle \vec{\mtcl E}^{\frown}\langle e \rangle, (\r, \a^{**})\rangle$. In the other case, by clauses (3)(c)(ii), (3)(b) and (3)(a) in Definition \ref{separating-def} for the pair $r_{\l_1}^0$, $r_{\l_1}^1$, we have that $\a^{**}\in \Xi_{\l_1}\cap M_{\l_1}=\Xi_{\l_0}\cap M_{\l_0}$, and therefore
   $\a^{**}\in \dom(f_{r_{\l_1}^1})$ and $x\in f_{r_{\l_1}^1}(\a^{**})$ again by (3)(a). But then $\bar\a\in\dom(f_{r_{\l_1}^1})$ and $x\in f_{r_{\l_1}^1}(\bar\a)$ by
   adequacy of
   $r_{\l_1}^1$. Thus we have that (2) holds for $\langle\vec{\mtcl E}_*, (\r, \a^*)\rangle$, which finishes the proof of the subclaim.
\end{proof}

Remember that $$q^+=(r_{\l_0}^{0}\restriction\a+1)\oplus (r_{\l_1}^{1}\restriction\a+1)$$ and that we are aiming to prove that $q^+\in\MQB_{\a+1}$. We also know that $$q=(r_{\l_0}^{0}\restriction\a)\oplus (r_{\l_1}^{1}\restriction\a)$$ is a condition in $\MQB_\a$. One way $q^+$ could fail to be a condition is that there are $\epsilon$, $\epsilon'\in \{0, 1\}$, together with $\a_0\in \dom(f_{r^{\epsilon}_{\l_{\epsilon}}})$, $x_0=(\r_0, \zeta_0)\in f_{r^{\epsilon}_{\l_\epsilon}}(\a_0)$, $\a_1\in \dom(f_{r^{\epsilon'}_{\l_{\epsilon'}}})$, $x_1=(\r_1, \zeta_1)\in f_{r^{\epsilon'}_{\l_{\epsilon'}}}(\a_1)$, $\a_1$, $\a_0\leq\a$, and a nonzero ordinal $\bar\a$ such that
there are connected $\tau_{r_{\l_0}^0\oplus r_{\l_1}^1}$-threads $\langle \vec{\mtcl E}^*_0, (\r_0, \a_0)\rangle$ and $\langle\vec{\mtcl E}^*_1, (\r_1, \a_1)\rangle$, res\-pectively, such that
\begin{itemize}
\item $\bar\a=\Psi_{\vec{\mtcl E}^*_0}(\a_0)=\Psi_{\vec{\mtcl E}^*_1}(\a_1)$,
\item both $\vec{\mtcl E}^*_0$ and $\vec{\mtcl E}^*_1$ consist of edges in $\tau_{r_{\l_0}^0\oplus r_{\l_1}^1}$, and such that
\item $q \restriction \m(\bar \a)$ does not
force $x_0$ and $x_1$ to be incomparable in $\lusim{T}_{\m(\bar\a)}$.\end{itemize}

By Lemma \ref{simplifying-amalg}, we may replace $\langle \vec{\mtcl E}^*_0, (\r_0, \a_0)\rangle$ and $\langle\vec{\mtcl E}^*_1, (\r_1, \a_1)\rangle$ by
connected $\tau_{r_{\l_0}^0\cup r_{\l_1}^1}$-threads $\langle \vec{\mtcl E}'_0, (\r_0, \a_0)\rangle$ and $\langle\vec{\mtcl E}'_1, (\r_1, \a_1)\rangle$ all of whose members involving models of height at least $\l_1$ are edges in $\tau_{r_{\l_1}^1}$.\footnote{$\vec{\mtcl E}'_0$ and $\vec{\mtcl E}'_1$ may of course involve anti-edges.}

By Subclaim \ref{copies-of-w-part}, applied to $\a_0$, $x_0$ and the connected $\tau_{r_{\l_0}^0\cup r_{\l_1}^1}$-thread
$\langle \vec{\mtcl E}'_0, (\r_0, \a_0)\rangle$, at least one of the following holds.
 \begin{enumerate}
 \item[$(1)_0$] $\bar\a\in\dom(f_{r_{\l_0}^0})$ and $x_0\in f_{r_{\l_0}^0}(\bar\a)$.
 \item[$(2)_0$] $\bar\a\in\dom(f_{r_{\l_1}^1})$ and $x_0\in f_{r_{\l_1}^1}(\bar\a)$.
 \item[$(3)_0$] There is some $\a_0^{**}\in\dom(f_{r_{\l_0}^0})$ such that $x_0\in f_{r_{\l_0}^0}(\a_0^{**})$ and some connected $\tau_{r_{\l_1}^1}$-thread $\langle\vec{\mtcl E}_0'', (\r_0, \a_0^{**})\rangle$ such that
 \begin{itemize}
 \item $\Psi_{\vec{\mtcl E}_0''}(\a_0^{**})=\bar\a$ and
 \item all members of $\vec{\mtcl E}_0''$ are edges $\langle (N_0, \g_0), (N_1, \g_1)\rangle$ such that $\d_{N_0}\geq\l_1$.
  \end{itemize}
\end{enumerate}
Similarly, and by applying
Subclaim \ref{copies-of-w-part} to $\a_1$, $x_1$ and the connected $\tau_{r_{\l_0}^0\cup r_{\l_1}^1}$-thread
$\langle \vec{\mtcl E}'_1, (\r_1, \a_1)\rangle$, at least one of the following holds.
 \begin{enumerate}
 \item[$(1)_1$] $\bar\a\in\dom(f_{r_{\l_0}^0})$ and $x_1\in f_{r_{\l_0}^0}(\bar\a)$.
 \item[$(2)_1$] $\bar\a\in\dom(f_{r_{\l_1}^1})$ and $x_1\in f_{r_{\l_1}^1}(\bar\a)$.
 \item[$(3)_1$] There is some $\a_1^{**}\in\dom(f_{r_{\l_0}^0})$ such that $x_1\in f_{r_{\l_0}^0}(\a_1^{**})$ and some connected $\tau_{r_{\l_1}^1}$-thread $\langle\vec{\mtcl E}_1'', (\r_1, \a_1^{**})\rangle$ such that
 \begin{itemize}
 \item $\Psi_{\vec{\mtcl E}_1''}(\a_1^{**})=\bar\a$ and
 \item all members of $\vec{\mtcl E}_1''$ are edges $\langle (N_0, \g_0), (N_1, \g_1)\rangle$ such that $\d_{N_0}\geq\l_1$.
  \end{itemize}
\end{enumerate}
After changing some of the above objects if necessary, we essentially reduce to  one of the following situations.

\begin{enumerate}

\item $\bar\a\in \dom(f_{r_{\l_0}^0})$ and $x_0$, $x_1\in f_{r_{\l_0}^0}(\bar\a)$.

\item $\bar\a\in \dom(f_{r_{\l_1}^1})$ and $x_0$, $x_1\in f_{r_{\l_1}^1}(\bar\a)$.

\item $\bar\a\in\dom(f_{r_{\l_0}^0})\cap\dom(f_{r_{\l_1}^1})$, $x_0\in f_{r_{\l_0}^0}(\bar\a)$, and $x_1\in f_{r_{\l_1}^1}(\bar\a)$.

\item $\a_0\in\dom(f_{r_{\l_0}^0})$, $x_0\in f_{r_{\l_0}^0}(\a_0)$, and there is a connected $\tau_{r_{\l_1}^1}$-thread $\langle\vec{\mtcl E}_0, (\r_0, \a_0)\rangle$ such that
\begin{itemize}
 \item $\Psi_{\vec{\mtcl E}_0}(\a_0)=\bar\a$ and
 \item all members of $\vec{\mtcl E}_0$ are edges involving models of height at least $\l_1$,
 \end{itemize}
 \noindent and such that one of the following holds.

 \begin{enumerate}

 \item $\bar\a\in\dom(f_{r_{\l_0}^0})$ and $x_1\in f_{r_{\l_0}^0}(\bar\a)$.
 \item $\bar\a\in\dom(f_{r_{\l_1}^1})$ and $x_1\in f_{r_{\l_1}^1}(\bar\a)$.
 \item $\a_1\in \dom(f_{r_{\l_0}^0})$, $x_1\in f_{r_{\l_0}^0}(\a_1)$, and there is a connected $\tau_{r_{\l_1}^1}$-thread $\langle\vec{\mtcl E}_1, (\r_1, \a_1)\rangle$ such that
\begin{itemize}
 \item $\Psi_{\vec{\mtcl E}_1}(\a_1)=\bar\a$ and
 \item all members of $\vec{\mtcl E}_1$ are edges involving models of height at least $\l_1$.
 \end{itemize}
\end{enumerate}\end{enumerate}

We may clearly rule out (1) and (2) since
$q\restriction\m(\bar\a)$ extends both of $r_{\l_0}^0\restriction\m(\bar\a)$ and $r_{\l_1}^1\restriction\m(\bar\a)$.
 Let us assume that (4) holds.\footnote{We are considering this case before the case that (3) holds since the proof in the latter case will be a simpler variant of an argument we are about to see.}  We will first consider the subcase when (a) holds. By Subclaim \ref{shortness} applied to the fact that the height of all models occurring in $\vec{\mtcl E}_0$ is at least $\l_1$ and the fact that both $\bar\a$ and $\a_0$ are in $M_\l$, we get that $\bar\a=\a_0$. But then we get a contradiction as in case (1).

Let us now consider the subcase when (b) holds. By
adequacy of $r_{\l_1}^1$ it follows that $\a_0\in\dom(f_{r_{\l_1}^1})$.
If $\r_1<\l_1$, then by
adequacy of $r_{\l_1}^1$ we get that $x_1\in f_{\l_1}^1(\a_0)$ and then $x_1\in f_{r_{\l_0}^0}(\a_0)$ by clauses (3)(a)-(b) in Definition \ref{separating-def} for $r_{\l_0}^0$ and $r_{\l_1}^1$. But then $r_{\l_0}^0 \restriction \m(\a_0)$ extends conditions $r_0\in A^{\m(\a_0)}_{x_0, \bar\rho}$ and $r_1\in A^{\m(\a_0)}_{x_1, \bar\r}$, for some $\bar\r\leq\min\{\r_0, \r_1\}$, forcing $x_0$ and $x_1$ to be incomparable in $\lusim{T}_{\m(\a_0)}$ and, by Lemma \ref{transfer}, $\Psi_{\vec{\mtcl E}_0}(r_0)$ and $\Psi_{\vec{\mtcl E}_0}(r_1)$ are conditions weaker than $q \restriction \mu(\bar \a)$ and forcing $x_0$ and $x_1$ to be incomparable in $\lusim{T}_{\mu(\bar \a)}$. We may thus assume that $\r_1\geq\l_1$. Suppose $\r_0<\l_0$. Then $x_0\in f_{r_{\l_1}^1}(\a_0)$ by clause (3)(a) in Definition \ref{separating-def}, and hence $x_0\in f_{r_{\l_1}^1}(\bar\a)$ by
adequacy of $r_{\l_1}^1$. We again reach a contradiction as in case (2).  Hence we may assume $\l_0\leq\r_0$. The rest of the argument, in this case, is now essentially as in the corresponding proof in \cite{laver-shelah}. Since $\bar\a\leq\a_0$ due to the fact that all members of $\vec{\mtcl E}_0$ are edges, by an appropriate instance of clause (4) in Definition \ref{separating-def}
we may pick

\begin{itemize}
\item a node $x_0'=(\rho', \z_0')\in f_{r^{0}_{\l_1}}(\a_0)\setminus (\l_1\times\o_1)$,
\item a stage $\a^*\in \dom(f_{r_{\l_0}^{1}})$ such that $\a^*\leq\a_0$, and
\item a node $x_1^*=(\rho^*_1, \zeta^*_1)\in f_{r_{\l_0}^{1}}(\a^*)\setminus (\l_0\times\o_1)$
\end{itemize}

\noindent such that
\[\chi_0(x_0, x_1^*, \a_0, \a^*, \l_0)=\chi_0(x_0', x_1, \a_0, \bar\a, \l_1)\] and \[\chi_1(x_0, x_1^*, \a_0, \a^*, \l_0)=\chi_1(x_0', x_1, \a_0, \bar\a, \l_1)\]
\noindent (where $\chi_0$ and $\chi_1$ are the projections in Definition \ref{separating-def}).
 Let $\bar\r$ be such that $$\chi_0(x_0, x_1^*, \a_0, \a^*, \l_0)=(\bar\r, \bar\zeta_0)$$ and $$\chi_1(x_0', x_1, \a_0, \bar\a, \l_1)=(\bar\r, \bar\zeta_1)$$ for some $\bar\zeta_0\neq \bar\zeta_1$ in $\o_1$. We have that $q \restriction \m(\a_0)$ extends a condition $r_0\in A^{\m(\a_0)}_{x_0, \bar\r}$ forcing $\chi_0(x_0, x_1^*, \a_0, \a^*, \l_0)$ to be below $x_0$ in $\lusim{T}_{\m(\a_0)}$ (because this is true about $r_{\l_0}^{0}\restriction\m(\a_0)$).
 Also, $r_{\l_1}^{1}\restriction\m(\bar\a)$ extends a condition $r_1\in A^{\m(\bar\a)}_{x_1, \bar\rho}$ forcing that $\chi_1(x_0', x_1, \a_0, \bar\a, \l_1)$ is below $x_1$ in $\lusim{T}_{\m(\bar\a)}$, and therefore so does $q \restriction \m(\bar\a)$. We also have that
  $q\restriction\m(\bar\a)$ extends
 $\Psi_{\vec{\mtcl E}_0}(r_0)$, and by Lemma \ref{transfer} $\Psi_{\vec{\mtcl E}_0}(r_0)$ forces $\chi_0(x_0, x_1^*, \a_0, \a^*, \l_0)$ to be below $x_0$ in $\lusim{T}_{\m(\bar\a)}$. But now we get a contradiction since $\bar\zeta_0\neq\bar\zeta_1$ and hence $q \restriction \m(\bar\a)$ forces $x_0$ and $x_1$ to be incomparable in $\lusim{T}_{\m(\bar\a)}$.

It remains to consider the subcase that (c) holds. Since  all models occurring in members of $\vec{\mtcl E}_0$ or of $\vec{\mtcl E}_1$ are of height at least $\l_1$ and both $\a_0$ and $\a_1$ are in $M_\l$, by Subclaim \ref{shortness} we get that $\a_0=\a_1$.  But now we get a contradiction by the same argument we have already encountered using Lemma \ref{transfer}.

We finally handle the case when (3) holds. In this case we may assume that both $\r_0\geq\l_0$ and $\r_1\geq\l_1$ hold, as otherwise we get, by an application of clause (3)(a) in Definition \ref{separating-def}, that at least one of $x_0$, $x_1$ is in $f_{r_{\l_0}^0}(\bar\a)\cap f_{r_{\l_1}^1}(\bar\a)$, which immediately yields a contradiction. But now, since $\r_0\geq\l_0$ and $\r_1\geq\l_1$, we obtain a contradiction by a separation argument using clause (4) in Definition \ref{separating-def}---with both $\a$ and $\a'$, in that definition, being $\bar\a$---like the one we have already seen.

We will now prove that clause (7) in the definition of $\MQB_{\a+1}$-condition holds for $q^+$. This will conclude the proof that $q^+$ is a condition (the verification of all remaining clauses in the definition of $\MQB_{\a+1}$-condition is immediate), and will therefore complete the proof of the claim.



Suppose $\bar\a<\a+1$ and $e=\langle (N_0, \bar\a+1), (N_1, \g_1)\rangle$ is a generalized edge coming from  $\tau_{q^+}\restriction\bar\a+1$. We must show that the following holds.

Let $r\in\MQB^{\d_{N_0}}_{\bar\a+1}$ be such that $e\restriction\bar\a$ comes from $\tau_r$, and suppose $$\vec{\mtcl E}=(\langle (N^i_0, \g^i_0), (N^i_1, \g^i_1)\rangle\,\mid\,i< n)$$ is a sequence of generalized edges coming from $\tau_r\cup\{e\}$ such that $\langle (N^0_0, \g^0_0), (N^0_1, \g^0_1)\rangle=e$ and $\langle \vec{\mtcl E}, (\emptyset, \bar\a)\rangle$ is a correct thread. Let $\d=\min\{\d_{N^i_0} \mid i<n\}$ and $\a'= \Psi_{\vec{\mtcl E}}(\bar\a)$. Suppose
  $r\restriction\m(\bar\a)$ forces every two distinct nodes in $f_r(\a')\cap (\d\times\o_1)$ to be incomparable in $\lusim{T}_{\m(\bar\a)}$.
Then there is an extension $r^*\in\MQB^{\d_{N_0}}_{\bar\a+1}$ of $r$ such that

 \begin{enumerate}
 \item $f_{r^*}(\bar\a)\cap(\d\times\o_1)\subseteq f_{r^*}(\a')\cap(\d\times\o_1)$, and
\item $r^*\restriction \m(\bar\a)$ forces every two distinct nodes in $$(f_{r}(\a')\cap (\d\times\o_1))\cup  f_r(\bar\a)$$ to be incomparable in $\lusim{T}_{\m(\bar\a)}$.
 \end{enumerate}

We may assume that $e$ does not come from either $\tau_{r_{\l_0}^0}$ or $\tau_{r_{\l_1}^1}$, as otherwise we would be done since both $r_{\l_0}^0$ and $r_{\l_1}^1$ are conditions. The crucial point is now that, thanks to Lemma \ref{definability}, the above is a fact about $e$ that can be expressed over $(H(\k^+), \in, \Phi_{\bar\a+1})$ with $e$ as parameter. Letting now $\a^\dag = \max\{\bar\a+1, \g_1\}$, $e=\Psi_{\vec{\mtcl E}}(e^*)$ for some generalized edge $e^*$ coming from $(\tau_{r_{\l_0}^0}\restriction\a^*+1)\cup(\tau_{r_{\l_1}^1}\restriction\a^*+1)$, for some $\a^*$, and some appropriate connected $\tau_{r_{\l_0}^0}\cup\tau_{r_{\l_1}^1}$-thread $\langle\vec{\mtcl E}, (e^*, \a^*)\rangle$ such that $\Psi_{\vec{\mtcl E}}(\a^*)=\a^\dag$. The corresponding fact holds in $(H(\k^+), \in, \Phi_{\a^*})$ about $e^*$ since both $r_{\l_0}^0\restriction\a^*+1$ and $r_{\l_1}^1\restriction\a^*+1$ are $\MQB_{\a^*+1}$-conditions. But then the desired fact holds about $e$ in $(H(\k^+), \in, \Phi_{\bar\a+1})$ by correctness of $\vec{\mtcl E}$, using the fact that $\Psi_{\vec{\mtcl E}^{-1}}(\bar\a+1)\leq\a^*$. This concludes the proof of Claim \ref{separating-pair-compatibility}.
\end{proof}

The following technical fact appears essentially in \cite{laver-shelah}.

\begin{claim}\label{main-technical-fact} Suppose $Z\in\mtcl S$,
$(p^{0}_\l\,\mid\,\l\in Z)\in Q$ and $(p^{1}_\l\,\mid\,\l\in Z)\in Q$ are sequences of conditions in $\MQB_{\b}^\ast$, and suppose that for every $\l\in Z$,
\begin{itemize}
\item $p_\l^0\restriction M_\l$ and $p_\l^1\restriction M_\l$ are compatible conditions in $\MQB_{\b}^\ast\cap M_\l$,
\item $p^0_\l$ and $p^1_\l$ are $\lambda$-compatible with respect to $\varphi$ and
$\a$ for all $\a<\b$,
\item $\a_\l\in  \dom(f_{p^0_\l})\cap M_\l$,
\item $\a'_\l\in \dom(f_{p^1_\l})$ is a nonzero ordinal such that $\a_\l'\leq\a_\l$, and
\item $x_\l=(\r^0_\l, \z^0_\l)$ and $y_\l=(\r^1_\l, \z^1_\l)$ are nodes of level at least $\l$  such that
$x_\l\in f_{p^0_\l}(\a_\l)$ and $y_\l\in f_{p^1_\l}(\a'_\l)$.\end{itemize} \noindent Then there is $D \in \mtcl F$, together with two sequences $(p_\l^{2}\,\mid\,\l\in Z\cap D)$, $(p_\l^{3}\,\mid\,\l\in Z\cap D)$ of conditions in $\MQB_{\b}^\ast$ such that

\begin{enumerate}
\it for each $\l\in Z\cap D$, $p_\l^{2}\leq p_\l^0$ and $p_\l^{3}\leq p_\l^1$,
\it for each $\l\in Z\cap D$, $p_\l^{2}\restriction M_\l$ and  $p_\l^{3}\restriction M_\l$ are compatible in $\MQB_{\b}^\ast\cap M_\l$, and
 \it for each $\l\in Z\cap D$, $x_\l$ and $y_\l$ are separated below $\l$ at stages $\m(\a_\l)$ and $\m(\a'_\l)$ by $p^{2}_\l\restriction\m(\a_\l)$ and $p^{3}_\l\restriction\m(\a'_\l)$.\end{enumerate}\end{claim}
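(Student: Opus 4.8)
The plan is to adapt the separation argument of Laver and Shelah (\cite{laver-shelah}) to the present forcing.

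First I would carry out a reflection, using the normality of $\mtcl F$ together with the reflection properties of the weak compactness filter --- this is the point at which the weak compactness of $\k$ enters --- to produce $D\in\mtcl F$ so that for every $\l\in Z\cap D$: $M_\l\prec Q$, $M_\l\cap\k=\l$, ${}^{<\l}M_\l\subseteq M_\l$, $\l$ is inaccessible, all relevant parameters ($\b$ and the sequences $(p^0_\l),(p^1_\l),(\a_\l),(\a'_\l),(x_\l),(y_\l)$) belong to $M_\l$, and, writing $\gamma_\l=\m(\a_\l)$ and $\gamma'_\l=\m(\a'_\l)$ (so that $\lusim{T}_{\gamma_\l},\lusim{T}_{\gamma'_\l}\in M_\l$), the forcing $\MQB^\ast_{\gamma_\l}\cap M_\l$ forces over $V$ that $\lusim{T}_{\gamma_\l}$, as computed in $M_\l$ --- a tree $S_\l$ of height $\l$ on $\l\times\o_1$ --- has no cofinal branch, and likewise for $\gamma'_\l$. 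This last item I would get by reflecting, down the filter $\mtcl F$, the statement ``$\MQB^\ast_{\gamma_\l}$ forces over $V$ that $\lusim{T}_{\gamma_\l}$ has no $\k$-branch'', which is valid by the induction hypothesis (as already recorded above); the $\k$-level statement reflects to the corresponding $\l$-level statement about the initial piece $\MQB^\ast_{\gamma_\l}\cap M_\l$ and the restricted tree $S_\l$.

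Next, fix $\l\in Z\cap D$. It is enough to find $s^0\leq_{\MQB^\ast_{\gamma_\l}}p^0_\l\restriction\gamma_\l$ and $s^1\leq_{\MQB^\ast_{\gamma'_\l}}p^1_\l\restriction\gamma'_\l$ whose restrictions to $M_\l$ are compatible in $\MQB^\ast_\b\cap M_\l$ and that separate $x_\l$ and $y_\l$ below $\l$; for then, amalgamating $s^0$ with $p^0_\l$ over $\gamma_\l$ and $s^1$ with $p^1_\l$ over $\gamma'_\l$ by Lemma \ref{compl} produces $p^2_\l,p^3_\l$, and one checks clauses (1)--(3) of the conclusion. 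The case $\gamma_\l\neq\gamma'_\l$ is a simpler variant --- the two nodes then sit in distinct trees, and since $p^0_\l\restriction\gamma_\l$ and $p^1_\l\restriction\gamma'_\l$ are countable they cannot fix $\l$-many values of the respective ancestor branches, so a direct density argument separates --- so I would concentrate on $\gamma_\l=\gamma'_\l=:\gamma$. Suppose, towards a contradiction, that no such $s^0,s^1$ exist. Using that $p^0_\l$ and $p^1_\l$ are $\l$-compatible with respect to $\varphi$ and $\gamma$, I would first replace $p^0_\l\restriction\gamma$ and $p^1_\l\restriction\gamma$ by extensions $r^0_\ast,r^1_\ast$ sharing a common restriction $\bar r$ to $M_\l$, with $\bar r\in\bar P:=\MQB^\ast_\gamma\cap M_\l$ --- possible because their $M_\l$-restrictions are compatible in $\bar P$ and, by $\l$-compatibility, each of $p^0_\l\restriction\gamma$, $p^1_\l\restriction\gamma$ lies in the quotient below any extension in $\bar P$ of its $M_\l$-restriction. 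Then I would force below $\bar r$ with $\bar P$, obtain a generic $g$, and pass to $V[g]$. Since $\bar P\lessdot\MQB^\ast_\gamma\lessdot\MQB_\gamma$, the conditions $r^0_\ast,r^1_\ast$ lie in $P':=\MQB^\ast_\gamma/g$, and $S_\l$ is decided by $g$ and --- by the choice of $D$ --- has no cofinal branch in $V[g]$. On the other hand, the non-existence of $s^0,s^1$ says that no pair $(s^0,s^1)\leq(r^0_\ast,r^1_\ast)$ in $P'\times P'$ forces the $\lusim{T}_\gamma$-ancestor of $x_\l$ at some level $\bar\r<\l$ to differ from the $\lusim{T}_\gamma$-ancestor of $y_\l$ at $\bar\r$, since such a pair, amalgamated back up via Lemma \ref{compl}, would be one of the forbidden separating pairs (its $M_\l$-restrictions lie in $g$, hence are compatible). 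Now take mutually $P'$-generic $G_0\ni r^0_\ast$ and $G_1\ni r^1_\ast$ over $V[g]$. The ancestor branch of $x_\l$ below $\l$ computed in $V[g][G_0]$ and the ancestor branch of $y_\l$ below $\l$ computed in $V[g][G_1]$ must then agree at every level $\bar\r<\l$ --- otherwise suitable conditions in $G_0$ and $G_1$ would violate the previous sentence --- hence coincide; by mutual genericity their common value lies in $V[g]$, and it is a cofinal branch of $S_\l$, which is the desired contradiction.

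The main obstacle is the reflection step producing $D$: one has to express, and then reflect along $\mtcl F$, the assertion that the initial pieces $\MQB^\ast_{\gamma_\l}\cap M_\l$ add no cofinal branch to $\lusim{T}_{\gamma_\l}\restriction\l$, uniformly in $\gamma_\l=\m(\a_\l)$, which can be any ordinal below $\k^+$ --- and this is exactly where the length of the iteration being at most $\k^+$ matters, so that all the relevant objects can be coded inside $H(\k^+)$ and reflected. A secondary, more routine, point is to verify that replacing the $M_\l$-restrictions by $\bar r$ and amalgamating back up to $\MQB^\ast_\b$ via Lemma \ref{compl} preserve both the separation data and the compatibility of the $M_\l$-restrictions, and that a $(P'\times P')$-generic witnessing a disagreement of ancestors genuinely arises from $\MQB^\ast_\b$-conditions of the forbidden form.
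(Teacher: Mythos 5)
Your overall route is the paper's: reflect along the weak compactness filter to get $D\in\mtcl F$ on which $M_\l\cap\k=\l$, $\MQB^\ast_\a\cap M_\l\lessdot\MQB^\ast_\a$, and $\MQB^\ast_{\m(\a)}\cap M_\l$ adds no $\l$-branch to $\lusim{T}_{\m(\a)}\cap M_\l$; then, for fixed $\l$, use $\l$-compatibility to pass to an intermediate extension $V[g]$ by the $M_\l$-part of the forcing below a condition compatible with both $M_\l$-restrictions, and exploit the absence of $\l$-branches to produce a separating pair over the same $M_\l$-part, finally amalgamating upwards via Lemma \ref{compl}. Your phrasing of the key step through mutually $P'$-generic filters (a common ancestor branch would land in $V[g]$) is an acceptable variant of the paper's more hands-on argument, which, inside a single such extension $V[G]$, finds a level $\bar\r<\l$ at which \emph{two} distinct values for the $\lusim{T}_{\m(\a_\l)}$-ancestor of $x_\l$ are decided by conditions whose $M_\l$-part lies in $G$ (else the decided values define a $\l$-branch of the restricted tree in $V[G]$), plus one decided value for $y_\l$, and picks the discrepant one. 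The points you defer (turning ``forces the ancestors to differ'' into extensions of members of the antichains $A^{\m(\a_\l)}_{x_\l,\bar\r}$, $A^{\m(\a'_\l)}_{y_\l,\bar\r}$, the compatibility of the full $M_\l$-restrictions after amalgamation, and the fact that your auxiliary $r^0_\ast,r^1_\ast$ really lie in the quotient --- for the last it is cleaner to work with $p^0_\l\restriction\gamma$, $p^1_\l\restriction\gamma$ themselves, whose membership in the quotient is exactly what $\l$-compatibility gives) are routine or fixable.

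The genuine problem is your dismissal of the case $\m(\a_\l)\neq\m(\a'_\l)$. The reason you give --- the nodes sit in distinct trees and the countable conditions ``cannot fix $\l$-many values of the ancestor branches, so a direct density argument separates'' --- is not valid. The obstacle to separation was never that $p^0_\l$, $p^1_\l$ might already have decided the ancestors; it is that every pair of extensions with compatible $M_\l$-restrictions might be forced to decide the \emph{same} value at every level. Since for nodes of level below $\l$ the tree relations are decided by maximal antichains of size less than $\l$ contained in $M_\l$, the restrictions of both $\lusim{T}_{\m(\a_\l)}$ and $\lusim{T}_{\m(\a'_\l)}$ below level $\l$ are determined by the $M_\l$-part of the forcing, and they may well coincide or be correlated (the bookkeeping $\Phi$ can hand us at the later stage a name for a tree copying the earlier one below level $\l$, with $x_\l$ and $y_\l$ forced to sit above the same branch); a blind density argument extending the two conditions to decide ancestor values then gives no control whatsoever on the decided values being distinct. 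Ruling this out requires the no-new-branch property of (at least one of) the restricted trees over $V[g]$ --- exactly the mechanism of your main case. Indeed no case split is needed: the paper's argument is uniform in $\m(\a_\l)$, $\m(\a'_\l)$ (only the $x_\l$-side tree is subjected to the branch argument), and your own mutual-genericity argument also covers $\m(\a_\l)\neq\m(\a'_\l)$ verbatim, since if the two ancestor branches agreed at every level below $\l$ their common value would be a cofinal branch of both restricted trees lying in $V[g]$, contradicting the reflected no-branch statement for either of them. As written, however, the ``simpler variant'' you invoke for that case does not work, so that case is a gap in your proposal.
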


\begin{proof} Let $B\subseteq V_\kappa$ code $\varphi$, $(\MQB_\a^\ast)_{\a\in (\b+1)\cap Q}$, the collection of maximal antichains of $\MQB_\a^\ast$, for $\a\in \b\cap Q$, and $(\lusim{T}_\a)_{\a\in \mtcl X\cap\b \cap Q}$. By a reflection argument with an appropriate $\Pi^1_1$ sentence over the structure $(V_\k, \in, B)$, together with the fact that $\MQB^\ast_\a$ has the $\k$-c.c.\ for every $\a\in \b\cap Q$, there is a set $D\in \mtcl F$ consisting of inaccessible cardinals $\l<\k$ for which $M_\l$ is a model such that $M_\l\cap\k=\l$, $M_\l$ is closed under ${<}\l$-sequences, and such that for every $\a\in M_\l\cap\b$,
\begin{enumerate}
\item  $\MQB^\ast_{\m(\a)}\cap M_\l$ forces, over $V$, that $\lusim{T}_{\m(\a)}\cap M_\l$ has no $\l$-branches,
\item $\MQB^\ast_{\a}\cap M_\l$ has the $\l$-c.c., and
\item $\MQB^\ast_\a\cap M_\l\lessdot \MQB^\ast_\a$
\end{enumerate}

Fix $\l\in Z\cap D$.
Thanks to Lemma \ref{compl}, it suffices to show that there are extensions $p_\l^2$ and $p_\l^3$ of $p_\l^0\restriction\a_\l$ and $p_\l^1\restriction\a'_\l$, respectively, such that $p_\l^{2}\restriction M_\l$ and $p_\l^{3}\restriction M_\l$ are compatible in $\MQB_{\a_\l}^\ast\cap M_\l$, and such that $x_\l$ and $y_\l$ are separated below $\l$ at stages $\m(\a_\l)$ and $\m(\a_\l')$ by $p^{2}_\l\restriction\m(\a_\l)$ and $p^{3}_\l\restriction\m(\a'_\l)$.
By (3) we may view $\MQB^\ast_{\a_\l}$ as a  two-step forcing iteration $(\MQB^\ast_{\a_\l}\cap M_\l)\ast \lusim{\MSB}$. By $\l$-compatibility we may then identify $p_\l^0\restriction\a_\l$ and $p_\l^1\restriction\a_\l$ with, respectively, $\langle r^0, \lusim{s}^0\rangle$ and $\langle r^1, \lusim{s}^1\rangle$, both in $(\MQB^\ast_{\a_\l}\cap M_\l)\ast \lusim{\MSB}$.

Note that $r^0$ and $r^1$ are compatible in $\MQB^\ast_{\a_\l}\cap M_\l$. Working in an $(\MQB^\ast_{\a_\l}\cap M_\l)$-generic extension $V[G]$ of $V$ containing $r^0$ and $r^1$, we note that there have to be
\begin{itemize}
\item extensions $\langle r^{00}, \lusim{s}^{00}\rangle$ and $\langle r^{01}, \lusim{s}^{01}\rangle$ of $\langle r^0, \lusim{s}^0\rangle$ and
\item an extension $\langle r^{3}, \lusim{s}^{3}\rangle$ of $\langle r^1, \lusim{s}^1\rangle$
\end{itemize}

\noindent such that $r^{00}$, $r^{01}$ and $r^{3}$ are all in $G$, together with some $\bar\r<\l$ for which there is a pair $\zeta^{00}\neq \zeta^{01}$ of ordinals in $\o_1$ and there is $\zeta^3\in\omega_1$ such that, identifying $\lusim{T}_{\m(\a_\l)}$ and $\lusim{T}_{\m(\a'_\l)}$ with $(\MQB^\ast_{\a_\l}\cap M_\l)\ast \lusim{\MSB}$-names,\footnote{$\lusim{T}_{\m(\a'_\l)}$ is of course a $\MQB^\ast_{\a_\l}$-name since $\MQB^\ast_{\a'_\l}\subseteq\MQB^*_{\a_\l}$, so this identification makes sense.} we have the following.

\begin{itemize}
\item $\langle r^{00}, \lusim{s}^{00}\rangle$ extends a condition in $A^{\mu(\alpha_\lambda)}_{x_\l, \bar\rho}$ forcing that $(\bar\rho, \zeta^{00})$ is below $x_\l$ in $\lusim{T}_{\m(\a_\l)}$.
\item $\langle r^{01}, \lusim{s}^{01}\rangle$ extends a condition in $A^{\mu(\alpha_\lambda)}_{x_\l, \bar\rho}$ forcing that $(\bar\rho, \zeta^{01})$ is below $x_\l$ in $\lusim{T}_{\m(\a_\l)}$.
\item $\langle r^{3}, \lusim{s}^{3}\rangle$ extends a condition in $A^{\mu(\alpha'_\lambda)}_{y_\l, \bar\rho}$ forcing that $(\bar\rho, \zeta^{3})$ is below $y_\l$ in $\lusim{T}_{\m(\a'_\l)}$.
\end{itemize}

Indeed, any condition $\langle r, \lusim{s}\rangle$ in $(\MQB^\ast_{\a_\l}\cap M_\l)\ast \lusim{\MSB}$ such that $r\in G$ can be extended, for any $\bar\r<\l$, to a condition $\langle r^+, \lusim{s}^+\rangle$ such that
\begin{itemize}
\item $\langle r^+, \lusim{s}^+\rangle$ is stronger than some condition in $A^{\m(\a_\l)}_{x_\l, \bar\r}$ deciding some node $(\bar\rho, \zeta)$ to be below $x_\l$ in $\lusim{T}_{\m(\a_\l)}$, and
\item $r^+\in G$,
\end{itemize}
\noindent and similarly with $y_\l$ and $\lusim{T}_{\m(\a'_\l)}$ in place of $x_\l$ and $\lusim{T}_{\m(\a_\l)}$. Hence, if the above were to fail, then the following would hold.

\begin{itemize}
\item For every $\bar\rho<\l$
there is exactly one $\zeta<\o_1$ for which there is some condition $\langle r, \lusim{s}\rangle$ in $(\MQB^\ast_{\a_\l}\cap M_\l)\ast \lusim{\MSB}$ stronger than $\langle r^0, \lusim{s}^0\rangle$ with $r\in G$, and such that $\langle r, \lusim{s}\rangle$ extends a condition in $A^{\m(\a_\l)}_{x_\l, \bar\rho}$ forcing that $(\bar\rho, \zeta)$ is below $x_\l$ in $\lusim{T}_{\m(\a_\l)}$.
\end{itemize}

It would then follow that  $\lusim{T}_{\m(\a_\l)}$ has a $\l$-branch in $V[G]$, which contradicts (1).

Let  $\zeta^3<\o_1$ be such that some condition $\langle r^3, \lusim{s}^3\rangle$ extending $\langle r^1, \lusim{s}^1\rangle$ is such that
\begin{itemize}
\item $\langle r^3, \lusim{s}^3\rangle\restriction \m(\a'_\l)$ extends  a condition in $A^{\m(\a_\l')}_{y_\l, \bar\r}$ forcing $(\bar\rho, \zeta^{3})$ to be below $y_\l$ in $\lusim{T}_{\m(\a'_\l)}$, and
\item $r^3\in G$
\end{itemize}

But now, given conditions $\langle r^{0i}, \lusim{s}^{0i}\rangle$ as above (for $i\in\{0, 1\}$) there must be $i\in\{0, 1\}$ such that $\zeta^{0 i}\neq\zeta^{3}$. We may then set $p_\l^2= \langle r^{0i}, \lusim{s}^{0i}\rangle$ and $p_\l^3=\langle r^3, \lusim{s}^3\rangle$.
\end{proof}

By Claim \ref{separating-pair-compatibility}, in order to conclude the proof of the current instance of $(1)_\b$, it suffices to prove the following.

\begin{claim}\label{there is a separating pair}  There is a separating pair for $\vec\s_0$ and $\vec\s_1$.
\end{claim}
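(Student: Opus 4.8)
The plan is to build $Y$ together with the two sequences by a recursion of length $\o$, discharging at each step one more obligation from Definition~\ref{separating-def} while thinning the index set to a set that stays $\mtcl F$-positive.

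Fix $B\subseteq V_\k$ coding $\varphi$, the sequences $\vec\s_0$, $\vec\s_1$, the forcings $\MQB^\ast_\a$ for $\a\in(\b+1)\cap Q$ and their maximal antichains, and the names $\lusim{T}_\a$ for $\a\in\mtcl X\cap\b\cap Q$, exactly as in the proof of Claim~\ref{main-technical-fact}; a $\Pi^1_1$ reflection over $(V_\k,\in,B)$ gives a set $C\in\mtcl F$ of inaccessibles $\l$ for which $M_\l:=M^\varphi_\l$ satisfies $M_\l\prec Q$, $M_\l\cap\k=\l$, $^{{<}\l}M_\l\subseteq M_\l$, together with the good behaviour of the initial segments $\MQB^\ast_\a\cap M_\l$ listed in that proof. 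Put $Y_0=X\cap C\in\mtcl S$, and fix a bookkeeping function definable over $Q$ that lists, for each $\l$ and each $n<\o$, a potential task $(\a,\a',x,y)$ --- a pair of nodes to be separated at the stages $\m(\a),\m(\a')$ --- so that every such task attached to any condition produced along the way is eventually listed. Throughout the recursion every auxiliary choice (witnessing $\mtcl F$-sets, $F$-least extensions, common extensions amalgamating $M_\l$-traces, and the separating nodes $\chi_0,\chi_1$) is made canonically, so that the entire construction is deterministic and definable over $Q$ from $B$.

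Start with $r^{0,0}_\l=q^0_\l$, $r^{1,0}_\l=q^1_\l$, which agree on $M_\l$ by hypothesis. At step $n$, given adequate $\MQB^\ast_\b$-conditions $r^{0,n}_\l,r^{1,n}_\l$ with $r^{0,n}_\l\restriction M_\l=r^{1,n}_\l\restriction M_\l$ and $Y_n\in\mtcl S$, first use the induction hypothesis $(2)_\a$ for the relevant stages $\a\in\b\cap Q$ to pass to extensions that are $\l$-compatible with respect to $\varphi$ and $\a$, re-adequating by Lemma~\ref{adequate} and taking greatest lower bounds by Lemma~\ref{countable closure} when interleaving several applications is needed, all while preserving the agreement on $M_\l$; then apply Claim~\ref{main-technical-fact} to the task assigned by the bookkeeping whenever it is a genuine obligation (i.e.\ $\a\in\dom(f_{r^{0,n}_\l})\cap M_\l$, with $x,y$ of level $\geq\l$, etc.), obtaining $D_n\in\mtcl F$ and extensions that separate the chosen pair below $\l$; then, again using $\l$-compatibility, upgrade the resulting merely compatible $M_\l$-traces to equal ones, by amalgamating canonically inside $\MQB^\ast_\b\cap M_\l$ and extending both conditions. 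Set $Y_{n+1}=Y_n\cap D_n\in\mtcl S$, and let $r^{0,n+1}_\l,r^{1,n+1}_\l$ be the results. Finally let $Y=X\cap\bigcap_n Y_n\in\mtcl S$ (a countable intersection of members of $\mtcl F$ with a set in $\mtcl S$), and let $r^0_\l,r^1_\l$ be the greatest lower bounds of $(r^{0,n}_\l)_n,(r^{1,n}_\l)_n$, which exist by Lemma~\ref{countable closure} and are adequate because each $r^{i,n}_\l$ is.

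Clauses (1) and (2) of Definition~\ref{separating-def} are then immediate: every separation obligation is met by some stage, and separation of a pair of nodes is preserved under further extension. Clauses (3) and (4) are where the real work lies, and the key to them is uniformity: since the entire recursion, including the choice of the $D_n$ and of the separating nodes, is a single procedure definable over $Q$ from $B$, and since for $\l_0<\l_1$ in $Y$ the model $M_{\l_1}$ is elementary in $Q$ and contains $\l_0$ and all relevant parameters, $M_{\l_1}$ computes $r^0_{\l_0}$ and $r^1_{\l_0}$ correctly, whence $r^0_{\l_0},r^1_{\l_0}\in M_{\l_1}$ (clause (3)(b)); moreover $r^1_{\l_1}\restriction M_{\l_1}$ is exactly what $M_{\l_1}$ produces by running the same procedure truncated to $M_{\l_1}$, and by the agreement $q^0_\mu\restriction M_\mu=q^1_\mu\restriction M_\mu$ preserved through the recursion this coincides with $r^0_{\l_0}\restriction M_{\l_0}$ (clause (3)(a)), clause (3)(c) following since $\mtcl N_\epsilon,\Xi_\epsilon$ are computed from $r^0_{\l_\epsilon}\restriction M_{\l_\epsilon}$, $r^1_{\l_\epsilon}\restriction M_{\l_\epsilon}$ alone; clause (4) holds for the same reason, the separating nodes attached to a pair at level $\l_1$ being, by the canonical choice and the coincidence of data inside $M_{\l_1}$, literally those attached to the corresponding pair at level $\l_0$. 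The main obstacle is precisely this bundle of uniformity requirements --- arranging the recursion and all its auxiliary choices to be canonical enough that the traces $r^i_\l\restriction M_\l$ cohere across the sequence and the separating-node functions $\chi_0,\chi_1$ propagate, all while maintaining adequacy, $\l$-compatibility, and the required $M_\l$-agreements at every step; once that is in place, the claim rests on Claim~\ref{main-technical-fact} and Lemma~\ref{countable closure}.
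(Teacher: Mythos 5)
The first half of your construction --- the $\o$-length recursion that interleaves applications of $(2)_\a$ (to secure $\l$-compatibility with respect to $\varphi$ and the stages $\a\in\b\cap Q$) with applications of Claim \ref{main-technical-fact} under a bookkeeping of quadruples $(\a,\a',x,y)$, followed by taking greatest lower bounds via Lemma \ref{countable closure} --- is essentially the paper's argument and does yield adequate extensions satisfying clauses (1) and (2) of Definition \ref{separating-def} on an index set of the form $X\cap\bigcap_n D_n$. The gap is in how you obtain clauses (3) and (4). The paper gets these by a final pressing--down argument using the normality of $\mtcl F$: for each $\l$ the objects $r^i_\l\restriction M_\l$, $\dom(f_{r^i_\l})\cap M_\l$, $\mtcl N\cap M_\l$, $\Xi\cap M_\l$, and the relevant $\chi_0,\chi_1$-data are elements of $M_\l=\varphi``\l$, hence of the form $\varphi(\eta)$ with $\eta<\l$, so a regressive-function argument stabilizes all of them simultaneously on an $\mtcl F$-positive $Y$.

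Your substitute --- that (3)(a), (3)(c) and (4) follow because the recursion is ``a single procedure definable over $Q$'' --- does not work. Definability of $\l\mapsto(r^0_\l,r^1_\l)$ can plausibly give (3)(b), once the whole sequence is arranged to lie in $Q$ with its parameters in $M_{\l_1}$, but it cannot give equalities \emph{across different indices}: the hypothesis only ties $q^0_\l$ to $q^1_\l$ at the same $\l$, and the inputs $q^i_{\l_0}$ and $q^i_{\l_1}$ are otherwise unrelated conditions, so even a fully canonical construction has no reason to produce $r^0_{\l_0}\restriction M_{\l_0}=r^1_{\l_1}\restriction M_{\l_1}$. Moreover, the assertion that $r^1_{\l_1}\restriction M_{\l_1}$ is ``what $M_{\l_1}$ produces by running the same procedure truncated to $M_{\l_1}$'' is unjustified: the construction at index $\l_1$ operates on the full condition $q^1_{\l_1}$, which is not in $M_{\l_1}$, and the extensions supplied by Claim \ref{main-technical-fact} deposit new material inside $M_{\l_1}$ (the separating nodes below $\l_1$, new stages and edges, and the copies forced by clause (6) along edges involving small models) that depends on $q^1_{\l_1}$ itself, not just on its trace. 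Hence, for the sequences you build, the traces at $\l_0$ and $\l_1$ will in general simply differ, however canonically the auxiliary choices are made; some stabilization on an $\mtcl F$-positive set via the normality of $\mtcl F$ (the paper's pressing--down step, which also handles the matching of $\chi_0,\chi_1$ required by clause (4)) is unavoidable and is the missing step in your argument.
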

\begin{proof}
This follows from first applying
Claim \ref{main-technical-fact} and $(2)_\a$, for $\a<\b$, countably many times, using the normality of $\mtcl F$, and then running a pressing-down argument again using the normality of $\mtcl F$.

To be more specific, we start by building sequences $$\vec\s_{0, n}=(q_{\l, n}^{0}\,\mid\,\l\in X\cap D_n)$$ and $$\vec\s_{1, n}=(q_{\l, n}^{1}\,\mid\,\l\in  X\cap D_n),$$ for a $\subseteq$-decreasing sequence $(D_n)_{n<\o}$ of sets in $\mtcl F$, such that $\vec\s_{0, 0}=\vec\s_0$ and $\vec\s_{1, 0}=\vec\s_1$, and such that for every $n<\o$, $\vec\s_{0, n+1}$ and $\vec\s_{1, n+1}$ are obtained from $\vec\s_{0, n}$ and $\vec\s_{1, n}$ in the following way.

We first let $$\vec\s_{0, n, +}=(q_{\l, n, +}^{0}\,\mid\, \l\in X\cap D_n)$$ and $$\vec\s_{1, n, +}=(q_{\l, n, +}^{1}\,\mid\,\l\in X\cap D_n)$$ be sequences of
$\MQB_{\b}^\ast$-conditions such that for every $\l\in X\cap D_n$,
\begin{itemize}
\item $q_{\l, n, +}^{0}\leq_{\MQB_\b} q_{\l, n}^{0}$ and $q_{\l, n, +}^{0}\leq_{\MQB_\b} q_{\l, n}^{1}$,
\item $q_{\, n, +}^0\restriction M_\l$ and $q_{\, n, +}^1\restriction M_\l$ are compatible conditions in $\MQB_{\b}^\ast\cap M_\l$, and
\item $q_{\l, n, +}^{0}$ and $q_{\l, n, +}^{1}$ are both $\l$-compatible with respect to $\varphi$ and $\a$ for every $\a< \b$.
\end{itemize}


Recall that $e_\b:\k\longrightarrow \b$ is a surjection. Let also $D_{-1}=\k$. We may take $D_n$ to be the diagonal intersection $\Delta_{\x<\k}D^n_\x$, where for each $\a<\k$, $D^n_\x$ witnesses $(2)_{e_\b(\x)}$ for  $\vec\s_{0, n}$, $\vec\s_{1, n}$, $\varphi$, and $D_{n-1}$, i.e., $D^n_\x\in\mtcl F$ is such that $D^n_\x\subseteq D_{n-1}$ and such that for every $\l\in D^n_\x$ and for all $q^{0'}_\l\leq_{\MQB_{e_\b(\x)}} q^0_{\l,n}$ and $q^{1'}_\l\leq_{\MQB_{e_\b(\x)}} q^1_{\l, n}$, if $q^{0'}_\l\restriction M_\l\in \MQB_{e_\b(\x)}$ and $q^{0'}_\l\restriction M^\varphi_\l = q^{1'}_\l\restriction M_\l$, then there are conditions $r^0_\l\leq_{\MQB_{e_\b(\x)}} q^{0'}_\l$ and $r^{1}_\l\leq_{\MQB_{e_\b(\x)}} q^{1'}_\l$ such that

\begin{enumerate}
\item $r^{0}_\l\restriction M_\l = r^{1}_\l\restriction M_\l$ and
\item $r^{0}_\l$ and $r^{1}_\l$ are both $\l$-compatible with respect to $\varphi$ and $e_\b(\x)$.
\end{enumerate}

Given $\l\in X\cap D_n$, we need to construct $q_{\l, n, +}^0$ and $q_{\l, n, +}^1$. For this, let $W^\epsilon_{\l, n}$ be, for each $\epsilon\in \{0, 1\}$, the set of ordinals $\a\in M_\l\cap\b$ such that
\begin{itemize}
\item $\a\in \dom(f_{q_{\l, n}^\epsilon})$ or
\item there is a connected $\tau_{q^\epsilon_{\l, n}}$-thread $\langle\vec{\mtcl E}, \a\rangle$ with $\Psi_{\vec{\mtcl E}}(\a)\in\dom(f_{q_{\l, n}^\epsilon})$ and such that $\vec{\mtcl E}$ consists of edges.
\end{itemize}
We of course have that $|W^\epsilon_{\l, n}|\leq\al_0$. We may assume that both $W^0_{\l, n}$ and $W^0_{\l, n}$ are nonempty (the proof in the case when at least one of $W^0_{\l, n}$ and $W^0_{\l, n}$ is empty is an easier variation of the proof in the other case). Let $\epsilon\in \{0, 1\}$ be such that $\ssup(W^\epsilon_{\l, n})=\max\{\ssup(W^0_{\l, n}), \ssup(W^1_{\l, n})\}$. We will assume that $\ssup(W^\epsilon_{\l, n})$ has countable cofinality (the proof when $W^\epsilon_{\l, n}$ is empty or has a maximum is an easier variant of the proof in the case that  $\cf(\ssup(W^\epsilon_{\l, n}))=\omega$). Let $(\b_m)_{m<\o}$ be a strictly increasing sequence of ordinals in $M_\l$ converging to $\ssup(W^\epsilon_{\l, n})$. We construct $q_{\l, n, +}^0$ and $q_{\l, n, +}^0$ as the greatest lower bound of $(q_{\l, n, m}^0)_{m<\o}$ and $(q_{\l, n, m}^1)_{m<\o}$, respectively, where $q_{\l, n, 0}^0=q_{\l, n}^0$ and $q_{\l, n, 0}^1=q_{\l, n}^1$ and where, for each $m<\o$,
 $q_{\l, n, m+1}^0=r_{\l, n, m}^0\oplus q_{\l, n, m}^0$ and $q_{\l, n, m+1}^1=r_{\l, n, m}^1\oplus q_{\l, n, m}^1$, where $r^0_{\l, n, m}$, $r^1_{\l, n, m}\in\MQB_{\b_m}$ are conditions extending $q_{\l, n, m}^0$ and $q_{\l, n, m}^1$, respectively, and such that

 \begin{enumerate}
\item $r^{0}_{\l, n, m}\restriction M_\l = r^{1}_{\l, n, m}\restriction M_\l$ and
\item $r^{0}_{\l, n, m}$ and $r^{1}_{\l, n, m}$ are both $\l$-compatible with respect to $\varphi$ and $\b_m$.
\end{enumerate}

We may assume each $r^0_{\l, n, m}$ and $r^1_{\l, n, m}$ to be in $Q$, so that $q_{\l, n, +}^0$ and $q_{\l, n, +}^1$ are both in $\MQB_\b^*$.

 Now we find $D_{n+1}$ and $\vec\s_{0, n+1}$, $\vec\s_{1, n+1}$  by an application of Claim \ref{main-technical-fact} to $\vec\s_{0, n, +}$ and $\vec\s_{1, n, +}$ with an appropriate sequence $\a_{\l, n}$, $\a'_{\l, n}$, $x_{\l, n}$, $y_{\l, n}$ (for $\l\in X\cap D_n$). By extending $q^0_{\l, n+1}$ and $q^1_{\l, n+1}$ if necessary for $\l\in X\cap D_{n+1}$ we may assume that for every such $\l$, $q^0_{\l, n+1}$ and $q^1_{\l, n+1}$ are both adequate conditions, and $\dom(f_{q^0_{\l, n+1}})=\dom(f_{q^1_{\l, n+1}})$ .

Let $r_\l^{0}$ and $r_\l^{1}$ be the greatest lower bound of, respectively, $(q_{\l, n}^{0})_{n<\o}$ and $(q_{\l, n}^{1})_{n<\o}$, for $\l\in X\cap \bigcap_n D_n$.

By construction we have that for all $\l\in X\cap\bigcap_n D_n$, $r_\l^0$ and $r_\l^1$ are both adequate conditions, and  $\dom(f_{r_\l^0})=\dom(f_{r_\l^1})$ .
Also, by a standard book-keeping argument we can ensure that all relevant objects $\a_{\l, n}$, $\a'_{\l, n}$, $x_{\l, n}$, $y_{\l, n}$  (for $n<\o$ and $\l\in X\cap D_n$) have been chosen in such a way that in the end $$(r_\l^{0}\,\mid\,\l\in X\cap \bigcap_n D_n)$$ and $$(r_\l^{1}\,\mid\,\l\in X\cap \bigcap_n D_n)$$ satisfy clause (2) in Definition \ref{separating-def} as well.
Finally, by a standard pressing-down argument using the normality of $\mtcl F$, we may find $Y\in\mtcl S$, $Y\subseteq X\cap \bigcap_n D_n$, such that $\vec\s^\ast_{0}=(r_\l^{0}\,\mid\,\l\in Y)$ and  $\vec\s^\ast_{1}=(r_\l^{1}\,\mid\,\l\in Y)$ satisfy clauses (3) and (4)  in Definition \ref{separating-def}.
\end{proof}

We are left with proving $(2)_\b$. This is established with an argument similar to the one in the corresponding proof from \cite{laver-shelah}. Suppose $D\in\mtcl F$, $Q$ is a suitable model such that $\b, D\in Q$, $\varphi:\k\to Q$ is a  bijection, and $(q^0_\l\mid\l\in D)\in Q$ and $(q^1_\l\mid \l\in D)\in Q$ are sequences of $\MQB_\b$-conditions. By shrinking $D$ if necessary we may assume that $M_\l\cap\k=\l$ for each $\l\in D$. \
It suffices to show that there is some $D'\in\mtcl F$, $D'\subseteq D$, with the property that for every $\l\in D'$,  if $q^{0'}_\l\leq_\b q^0_\l$ and $q^{1'}_\l\leq_\b q^1_\l$ are such that $q^{0'}_\l\restriction M_\l\in\MQB_\b$ and $q^{0'}_\l\restriction M_\l= q^{1'}_\l\restriction M_\l$, then there is a condition $r_\l\leq_{\MQB_\b} q^{0'}_\l\restriction M_\l$ such that every condition in $\MQB_\b\cap M_\l$ extending $r_\l$ is compatible with both $q^{0'}_\l$ and $q^{1'}_\l$.

The case when $\b$ is a limit ordinal follows from the induction hypo\-thesis, using the normality of $\mtcl F$ (cf.\ the proof in \cite{laver-shelah}). Specifically, we fix an increasing sequence $(\b_i)_{i<\cf(\b)}$ of ordinals in $Q$ converging to $\b$. If $\cf(\b)=\k$, we take each $\b_i$ to be $\sup(M_\l\cap\b)$ for some $\l\in D$. For each $i<\cf(\b)$ we fix some $D_i\in\mtcl F$, $D_i\subseteq D$, witnessing $(2)_{\b_i}$ for $(q^0_\l\restriction \b_i\mid\l\in D)$ and $(q^1_\l\restriction \b_i\mid \l\in D)$. We make sure that $(D_i)_{i<\cf(\b)}$ is $\subseteq$-decreasing. If $\cf(\b)<\k$, then $D'=\bigcap_{i<\cf(\b)}D_i$ will witness $(2)_\b$ for $(q^0_\l\mid\l\in D)$ and $(q^1_\l\mid \l\in D)$, and if $\cf(\b)=\k$, $D'=\Delta_{i<\k} D_i$ will witness $(2)_\b$ for these objects. This can be easily shown, using the fact that each $M_\l$ is closed under $\o$-sequences in the case when $\cf(\b)=\o$.

To see this, suppose $\l\in D'$, $q^{0'}_\l\leq_{\MQB_\b}q^0_\l$, $q^{1'}_\l\leq_{\MQB_\b}q^1_\l$, $q^{0'}_\l\restriction M_\l\in \MQB_\b$, and $q^{0'}_\l\restriction M_\l=q^{1'}_\l\restriction M_\l$. Suppose first that $\cf(\b)>\o$. In this case, we pick any $i\in\cf(\b)\cap M_\l$ such that $\b_i$ is

\begin{itemize}
\item above $(\dom(f_{q^{0'}_\l})\cup\dom(f_{q^{1'}_\l}))\cap \sup(M_\l\cap\b)$ and
\item above every ordinal $\a\in M_\l\cap\b$ such that $\Psi_{\vec{\mtcl E}}(\a)\in \dom(f_{q^{0'}_\l})\cup\dom(f_{q^{1'}_\l})$, for  some connected $\tau_{q^{0'}_\l}\cup\tau_{q^{1'}_\l}$-thread $\langle\vec{\mtcl E}, \a\rangle$ such that $\vec{\mtcl E}$ consists of edges,\footnote{Cf.\ the proof of Claim \ref{there is a separating pair}.}

\end{itemize}
\noindent and find a condition $r\in M_\l\cap\MQB_{\b_i}$ with the property that every condition in $\MQB_{\b_i}\cap M_\l$ is compatible with both $q^{0'}_\l\restriction \b_i$ and $q^{1'}_\l\restriction \b_i$. Let $r_\l$ be any condition in $\MQB_\b\cap M_\l$  extending $r$ and $q^{0'}_\l\restriction M_\l$. It then follows that every condition in $\MQB_\b\cap M_\l$ extending $r_\l$ is compatible with both $q^{0'}_\l$ and $q^{1'}_\l$.

Now suppose $\b$ has countable cofinality.  Since $\b\in M_\l$, we may build sequences $(q^{0, i}_\l\,\mid\,i<\o)$, $(q^{1, i}_\l\,\mid\,i<\o)$ and $(r^i_\l\,\mid\,i<\o)$ such that for each $i$,

\begin{enumerate}
\item $q^{0, i}_\l$ and $q^{1, i}_\l$ are conditions in $\MQB_{\b_i}$ extending $q^{0'}_\l\restriction \b_i$ and $q^{1'}_\l\restriction \b_i$, respectively,
\item $r^i_\l\in \MQB_{\b_i}\cap M_\l$,
\item every condition in $\MQB_{\b_i}\cap M_\l$ extending $r^i_\l$ is compatible with both $q^{0, i}_\l$ and $q^{1, i}_\l$,
\item $q^{0, i+1}_\l\restriction\b_i$ extends $q^{0, i}_\l$ and $r^{i}_\l$,
\item $q^{1, i+1}_\l\restriction\b_i$ extends $q^{1, i}_\l$ and $r^{i}_\l$,  and
\item $r^{i+1}_\l\restriction\b_i$ extends $r^i_\l$.\end{enumerate}
Let $r_\l\in \MQB_\b$ be the greatest lower bound of $\{r^i_\l\,\mid\,i<\o\}$, and note that $r_\l\in M_\l$ since $M_\l$ is closed under sequences of length $\o$. But now it is straightforward to verify that every condition in  $\MQB_\b\cap M_\l$ extending $r_\l$ is compatible with $q^{0'}_\l$ and $q^{1'}_\l$.

It remains to consider the case that $\b$ is a successor ordinal, $\b=\b_0+1$.
Assuming the desired conclusion fails, there is some $X\in\mtcl S$, $X\subseteq D$, together with sequences $(q^{0'}_\l\,\mid\l\in X)$ and $(q^{0'}_\l\,\mid\l\in X)$ of conditions in $\MQB_\b$ such that for every $\l\in X$,
\begin{itemize}
\item $q^{0'}_\l$ extends $q^0_\l$ and $q^{1'}_\l$ extends $q^1_\l$,
\item $q^{0'}_\l\restriction M_\l\in \MQB_\b$,
\item $q^{0'}_\l\restriction M_\l=q^{1'}_\l\restriction M_\l$, and
\item for every condition $r$ in $\MQB_\b\cap M_\l$ extending $q^{0'}_\l \restriction M_\l$ there is a condition in $\MQB_\b\cap M_\l$ extending $r$ and incompatible with at least one of $q^{0'}_\l$, $q^{1'}_\l$.
\end{itemize}

Thanks to the induction hypothesis applied to $\b_0$ and to the fact that $(1)_\b$ holds we may assume, after shrinking $X$ to some $Y\in\mtcl S$ and extending the corresponding conditions if necessary, that for each $\l\in Y$,

\begin{itemize}
\item $q^{0'}_\l\restriction\b_0$ and $q^{1'}_\l\restriction\b_0$ are both $\l$-compatible with respect to $\varphi$ and $\b_0$, and
\item $q^{0'}_\l\oplus q^{1'}_{\l^\ast}$ is a condition for each $\l^\ast\in Y$, $\l^\ast>\l$.
\end{itemize}

By our assumption above we may then assume, after shrinking $Y$ if necessary, that for each $\l\in Y$ there is a maximal antichain $A_\l$ of $\MQB_\b\cap M_\l$ below $q^0_\l\restriction M_\l$ consisting of conditions $r$ such that at least one of the following statements holds.
\begin{itemize}
\item[$\theta_{r, 0, \l}$:] $r$ is incompatible with $q^{0'}_\l$.
\item[$\theta_{r, 1, \l}$:] $r$ is incompatible with $q^{1'}_\l$.\end{itemize}

By the definition of $\mtcl F$ coupled with an appropriate $\Pi^1_1$-reflection argument, we may further assume that each $A_\l$ is in fact a maximal antichain of $\MQB_\b$ below $q^{0'}_\l\restriction M_\l$ and that it has cardinality less than $\l$ (cf.\ the proof of Claim \ref{main-technical-fact}). Hence, after shrinking $Y$ one more time using the normality of $\mtcl F$, we may assume, for all $\l<\l^\ast$ in $Y$, that
\begin{itemize}
\item $A_\l=A_{\l^\ast}$ and that
\item for every $r\in A_\l$, $\theta_{r, 0, \l}$ holds if and only if $\theta_{r, 0, \l^\ast}$ does, and $\theta_{r, 1, \l}$ holds if and only if $\theta_{r, 1, \l^\ast}$ does.
\end{itemize}

Let us now fix any $\l<\l^\ast$ in $Y$. Since $A_\l$ is a maximal antichain of $\MQB_\b$ below $q^{0'}_\l\restriction M_\l$, we may find some $r\in A_\l$ compatible with $q^{0'}_\l\oplus q^{1'}_{\l^\ast}$. We have that $\theta_{r, 0, \l}$ cannot hold since $q^{0'}_\l\oplus q^{1'}_{\l^\ast}$ extends $q^{0'}_\l$. Therefore $\theta_{r, 1, \l}$ holds, and hence also $\theta_{r, 1, \l^\ast}$ does. But that is also a contradiction since $q^{0'}_\l\oplus q^{1'}_{\l^\ast}$ extends $q^{1'}_{\l^\ast}$.

This contradiction concludes the proof of $(2)_\b$, and hence the proof of the lemma.
\end{proof}

It may be worthwhile observing that, as opposed to what is usually the case in forcing constructions incorporating models as side conditions, our use of side conditions does not interfere with the $\kappa$-chain condition. The underlying reason is of course the fact that our pure side condition forcing is trivial (Lemma \ref{pedgeistrivial}).

\section{Completing the proof of Theorem \ref{main theorem}}\label{Completing the proof of Main theorem}

In this final section we conclude the proof of Theorem \ref{main theorem}.
By Lemma \ref{countable closure}, $\MQB_{\kappa^{+}}$ does not add new $\o$-sequences of ordinals and hence it preserves $\CH$. We will start this section by proving that $\MQB_{\k^{+}}$ also preserves $2^{\al_1}=\al_2$. Of course, the reason we have incorporated edges in our construction is precisely to make this proof work.

\begin{lemma}
\label{preserving ch}
$\Vdash_{\MQB_{\kappa^{+}}} 2^{\aleph_1}=\kappa$
\end{lemma}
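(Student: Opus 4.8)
Looking at this, I need to prove $\Vdash_{\MQB_{\kappa^+}} 2^{\aleph_1} = \kappa$. Since $\MQB_{\kappa^+}$ has the $\kappa$-c.c. and is $\sigma$-closed (preserving $\omega_1$ and CH), and collapses $\kappa$ to $\aleph_2$, the issue is the upper bound $2^{\aleph_1} \le \kappa$. Let me think about how the side conditions (the edges $\tau_q$) are used for this.

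The plan is: since CH holds in the extension, $2^{\aleph_1} = |[\omega_1]^{\omega_1}|$ computed via nice names. By $\sigma$-closure, every new subset of $\omega_1$ is added by some $\MQB_\beta$ with $\beta < \kappa^+$ (actually even of size $<\kappa^+$ many such subsets appear at each stage by the $\kappa$-c.c., but we need $\kappa$ total). So it suffices to show that for each $\beta < \kappa^+$, $\Vdash_{\MQB_\beta} 2^{\aleph_1} \le \kappa$, or rather that the whole iteration adds only $\kappa$-many subsets of $\omega_1$.

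\begin{proof}[Proof sketch]
Since $\MQB_{\kappa^+}$ is $\sigma$-closed, CH holds in any generic extension, so it suffices to show $\Vdash_{\MQB_{\kappa^+}} 2^{\aleph_1}\leq\kappa$; combined with Lemma \ref{levy}, which gives $\kappa=\aleph_2\le 2^{\aleph_1}$, the conclusion follows. Fix a $\MQB_{\kappa^+}$-name $\lusim{A}$ for a subset of $\omega_1$ and a condition $q$. The strategy is to show that $\lusim{A}\cap G$ is determined by a piece of $G$ of size $\le\aleph_1$ sitting inside a single model $N$ appearing (with a large enough marker) in the side condition of some extension of $q$, so that there are only $\kappa$-many possibilities. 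First I would, using CH and $\sigma$-closure, find for each $\xi<\omega_1$ a maximal antichain $A_\xi$ of conditions deciding ``$\xi\in\lusim{A}$''; by the $\kappa$-chain condition (Lemma \ref{chain condition theorem}) each $A_\xi$ has size $<\kappa$, and the ordinals involved in $\bigcup_\xi A_\xi$ are bounded by some $\beta^*<\kappa^+$, so $\lusim{A}$ may be taken to be a $\MQB_{\beta^*}$-name. Thus it suffices to bound, for each $\beta<\kappa^+$, the number of subsets of $\omega_1$ added by $\MQB_\beta$ by $\kappa$; equivalently, to produce, for each $\MQB_\beta$-name $\lusim{A}$ for a subset of $\omega_1$ and each condition $q$, an extension $q^*\le q$ and a ``small'' object that already decides $\lusim{A}$ below $q^*$.

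The key step, and the main obstacle, is the following reflection: given $\lusim{A}$ and $q$, extend $q$ (using Lemma \ref{adequate}, $\sigma$-closure, and the density of adding models to side conditions) to a condition $q^*$ whose side condition contains an edge $\langle (N_0,\gamma_0),(N_1,\gamma_1)\rangle$ with $\lusim{A}, q\in N_0$, with $\gamma_0=\beta$ (so $N_0$ is ``active'' all the way up), and with $N_1$ a projection of $N_0$ with $\delta_{N_1}<\delta_{N_0}$. The copying clauses (5), (6) and clause (7) in the definition of condition, together with the fact that $\MQB_\alpha\cap N_0\lessdot\MQB_\alpha$ holds on a dense set (Lemma \ref{technical-lemma2}), force the following homogeneity: the behaviour of $\lusim{A}$ below $q^*$, as computed inside $N_0$, is isomorphic via $\Psi_{N_0,N_1}$ to the behaviour of the image name $\Psi_{N_0,N_1}(\lusim{A})$ below $\Psi_{N_0,N_1}(q^*)$ inside $N_1$. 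Since $N_1$ has smaller height, one shows by a shrinking argument---pushing down along an $\omega_1$-sequence (or even $\kappa$-sequence) of such edges, using Fact \ref{agreement} and Subclaim \ref{shortness}-type rigidity---that $\lusim{A}$ is in fact decided by $q^*\restriction N$ for a model $N$ of height $<\delta_{N_0}$ which can be taken from a fixed family of size $\kappa$ of ``small'' models (each of size $\le\aleph_1$). The reals of $N$ and the trace $q^*\restriction N$ then belong to $H(\kappa)$, of which there are only $\kappa$-many relevant pieces.

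Putting this together: every subset of $\omega_1$ in the extension is of the form $\lusim{A}^G$ where $\lusim{A}$ is (forced by a condition in $G$ to be) decided by its restriction to one of $\kappa$-many small models $N$; and for each such $N$, the restriction $G\cap N$ ranges over at most $|{}^{\omega_1}(H(\kappa)\cap N)|\le\kappa$ possibilities (using CH in the extension and $|N|\le\aleph_1$). Hence there are at most $\kappa\cdot\kappa=\kappa$ subsets of $\omega_1$, giving $\Vdash_{\MQB_{\kappa^+}}2^{\aleph_1}\le\kappa$. Since we noted $\kappa\le 2^{\aleph_1}$ is forced, equality holds.
\end{proof}
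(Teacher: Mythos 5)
Your overall strategy has two genuine gaps, and the second rests on a structural misunderstanding of the side conditions. First, the reduction ``it suffices to bound, for each $\beta<\kappa^+$, the number of subsets of $\omega_1$ added by $\MQB_\beta$ by $\kappa$'' is a non sequitur: there are $\kappa^+$ stages, so this only yields $2^{\aleph_1}\leq\kappa^+$ in the extension, which is the trivial bound one already gets by counting nice names (each name lives in $H(\kappa^+)$ and $2^\kappa=\kappa^+$). The entire difficulty of the lemma is to compare names appearing at \emph{different} stages cofinally in $\kappa^+$ and show that they cannot all be pairwise distinct; no stage-by-stage bound can do this. Second, your key step is not available as described: in an edge $\langle(N_0,\gamma_0),(N_1,\gamma_1)\rangle$ the two models are twins, i.e.\ $\in$-isomorphic via $\Psi_{N_0,N_1}$, and since $N_\epsilon\cap\kappa=\delta_{N_\epsilon}$ is an ordinal and $\Psi_{N_0,N_1}(\kappa)=\kappa$, necessarily $\delta_{N_0}=\delta_{N_1}$; an edge with $\delta_{N_1}<\delta_{N_0}$ does not exist. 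The projection $\Psi_{N_0,N_1}$ moves ordinals in the interval $(\kappa,\kappa^+)$ downwards, not heights below $\kappa$, so there is no ``shrinking along edges'' down to models of size $\aleph_1$, and the claim that $\lusim A$ is decided by $q^*\restriction N$ for $N$ in a fixed $\kappa$-sized family of $\aleph_1$-sized models is never established (and your closing count $|{}^{\omega_1}(H(\kappa)\cap N)|\leq\kappa$ ``using CH'' is circular, since $\aleph_1^{\aleph_1}=2^{\aleph_1}$ is exactly what is being bounded).

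For comparison, the paper's proof runs the comparison across stages directly: assume a condition $q$ forces $\kappa^+$-many names $\lusim r_i$ (each a $\MQB_{\beta_i}$-name, by the $\kappa$-c.c.) to be pairwise distinct; choose for each $i$ a model $N_i\prec H(\kappa^+)$ of size $<\kappa$ containing $q,\lusim r_i,\beta_i$ with $\MQB_\alpha\cap N_i\lessdot\MQB_\alpha$ for $\alpha\in N_i\cap\kappa^+$ (via $\Pi^1_1$-reflection), fix a $\kappa$-sized $M\prec H(\theta)$ closed under ${<}\kappa$-sequences, and find $i_0\notin M$, $i_1\in M$ with an isomorphism $(N_{i_0},\in,P,\lusim r_{i_0},\beta_{i_0},q)\cong(N_{i_1},\in,P,\lusim r_{i_1},\beta_{i_1},q)$ which is a projection. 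The decisive move is then to adjoin the single edge $\langle(N_{i_0},\beta_{i_0}),(N_{i_1},\beta_{i_1})\rangle$ to $\tau_q$ (Lemma \ref{technical-lemma2} is what makes clause (7) verifiable for the resulting pair $\bar q$), after which the copying clauses (5) and (6), together with the fact that the maximal antichain in $N_{i_0}$ deciding ``$\nu\in\lusim r_{i_0}$'' is contained in $N_{i_0}$ and moved by $\Psi_{N_{i_0},N_{i_1}}$ to one deciding ``$\nu\in\lusim r_{i_1}$'', force $\lusim r_{i_0}=\lusim r_{i_1}$ below $\bar q$ --- contradicting the assumption. Your proposal gestures at the right ingredients (edges containing the name, the quotient/completeness facts, definability), but the reflection it needs is between two isomorphic contexts at \emph{different stages} $\beta_{i_0}>\beta_{i_1}$ inside a fixed $\kappa$-sized $M$, not a reduction of a single name to an $\aleph_1$-sized trace.
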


\begin{proof}
Suppose, towards a contradiction, that there is a condition $q \in \MQB_{\kappa^{+}}$
and a sequence $(\lusim{r}_i)_{i < \kappa^+}$
of $\MQB_{\kappa^{+}}$-names for subsets of $\omega_1$
such that
\begin{center}
$q \Vdash_{\MQB_{\kappa^{+}}} \lusim{r}_i \neq \lusim{r}_{i'}$ for all $i  < i' < \kappa^+$
\end{center}
By Lemma \ref{chain condition theorem} we may assume, for each $i$, that $\lusim{r}_i \in H(\kappa^{+})$
and $\lusim{r}_i$ is a $\MQB_{\beta_i}$-name for some $\beta_i <\kappa^+$.

Let $\theta$ be a large enough regular cardinal. For each $i < \kappa^+$
let $N^*_i \preceq H(\theta)$ be such that
\begin{enumerate}
\item  $|N^*_i|=|N^*_i\cap\kappa|$,
\item $N^*_i$ is closed under sequences of length less than $|N^*_i|$,
\item $q$, $\lusim{r}_i$, $\b_i$, $(\Phi_\a)_{\a<\k^+}$, $(\MQB_\alpha)_{\alpha<\k^{+}} \in N^*_i$, and
\item $\MQB_\a\cap N^*_i\lessdot \MQB_\a$ for every $\a\in \k^+\cap N^*_i$.
\end{enumerate}
$N^*_i$ can be found by a $\Pi^1_1$-reflection argument, using the weak compactness of $\k$ and the $\k$-chain condition of each $\MQB_\a$, as in the proof of Claim \ref{main-technical-fact}. Let $N_i = N^*_i \cap H(\kappa^{+})$ for each $i$.

Let now $P$ be the satisfaction predicate for the structure $$\langle H(\kappa^{+}), \in, \vec\Phi\rangle,$$ where $\vec\Phi\subseteq H(\k^+)$ codes $(\Phi_\a)_{\a<\k^+}$ in some canonical way, and let $M$ be an elementary submodel of $H(\theta)$ containing  $q$, $\lusim{r}_i$, $(\b_i)_{i<\k^+}$, $(\MQB_\alpha)_{\alpha\leq\k^{+}}$, $(N^\ast_i)_{i<\k^+}$ and $P$, and such that $|M|=\kappa$ and $^{{<}\kappa}M\subseteq M$.

Let $i_0\in\k^+\setminus M$. By a standard reflection argument we may find $i_1\in \k^+\cap M$ for which there exists an isomorphism
\[
\Psi: (N_{i_0}, \in,  P,  \lusim{r}_{i_0}, \b_{i_0}, q) \cong
(N_{i_1}, \in,  P, \lusim{r}_{i_1}, \b_{i_1}, q),
\]
such that $\Psi(\x)\leq\x$ for every ordinal $\xi\in N_{i_0}$.
Indeed, the existence of such an $i_1$ follows from the correctness of $M$ in $H(\theta)$ about an appropriate statement with parameters $(N_i)_{i<\k^+}$, $q$, $P$, $(\b_i)_{i<\k^+}$, $(\lusim{r}_i)_{i<\k^+}$,  $N_{i_0}\cap M$, and the isomorphism type of the structure $$(N_{i_0}, \in,  P,  \lusim{r}_{i_0}, \b_{i_0}, q),$$ all of which are in $M$.

Let $\bar q= (f_{q}, \tau_{\bar q})$, where
\[
\tau_{\bar q} = \tau_q \cup \{\langle (N_{i_0}, \b_{i_0}+1), (N_{i_1}, \b_{i_1}+1)\rangle     \}.
\]

It follows, using Lemma \ref{technical-lemma2}, that $\bar q \in \MQB_{\kappa^{+}}$.
We now show that $\bar q \Vdash_{\MQB_{\kappa^{+}}}\lusim{r}_{i_0} = \lusim{r}_{i_1}$.

Suppose not, and we will derive a contradiction. Thus we can find $\n < \omega_1$ and $q' \leq_{\k^{+}} \bar q$ such that
\[
q' \Vdash_{\MQB_{\kappa^{+}}} \text{``} \nu \in \lusim{r}_{i_0}  \iff \nu \notin \lusim{r}_{i_1} \text{''}.
\]
Let us assume, for concreteness, that $q' \Vdash_{\MQB_{\k^{+}}} \text{``} \nu \in \lusim{r}_{i_0}$ and $\nu \notin \lusim{r}_{i_1} \text{''}$ (the proof in the case that $q' \Vdash_{\MQB_{\k^{+}}} \text{``} \nu \in \lusim{r}_{i_1}$ and $\nu \notin \lusim{r}_{i_0} \text{''}$ is exactly the same).
 By correctness of $N^*_{i_0}$ we have that this model contains a maximal antichain $A$ of conditions in $\MQB_{\beta_{i_0}}$ deciding the statement ``$\nu \in \lusim{r}_{i_0}$''. By Lemma \ref{chain condition theorem} we know that $|A|<\kappa$ and hence, since $N^*_{i_0}\cap\k\in\k$, $A\subseteq N^*_{i_0}\cap H(\k^+)=N_{i_0}$ (cf.\ the proof of Lemma \ref{transfer}). Hence, we may find a common extension $q''$ of $q'$ and some $r \in N_{i_0}\cap A$ such that $r \Vdash_{\MQB_{\kappa^+}}$``$\nu \in \lusim{r}_{i_0}$''.

Also, note that, since $\Psi$ is an isomorphism between the structures $(N_{i_0}, \in, P,  \lusim{r}_{i_0}, \beta_{i_0}, q)$ and $(N_{i_1}, \in,  P, \lusim{r}_{i_1}, \beta_{i_1}, q)$, and by the choice of $P$, we have that
 $$\Psi(r) \Vdash_{\MQB_{\beta_{i_1}}}\text{``}\nu \in \Psi(\lusim{r}_{i_0})=\lusim{r}_{i_1}\text{~''}$$ But then, by clauses (5) and (6) in the definition of condition,
 we have that  $q'' \leq \Psi(r)$.
We thus obtain that $q'' \Vdash_{\MQB_{\kappa^{+}}}$``$\nu \in \lusim{r}_{i_1}$'', which is impossible as $q' \Vdash_{\MQB_{\kappa^{+}}}$``$\nu \notin \lusim{r}_{i_1}$'' and $q'' \leq q'$.

We get a contradiction and the lemma follows.\footnote{Note the resemblance of this proof with the proof of Lemma \ref{transfer}.}
\end{proof}

\begin{corollary} $\MQB_{\kappa^+}$ forces $\GCH$.\end{corollary}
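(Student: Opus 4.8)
The plan is to read off the three relevant instances of $\GCH$ in $V^{\MQB_{\kappa^+}}$, namely $2^{\aleph_0}=\aleph_1$, $2^{\aleph_1}=\aleph_2$, and $2^\mu=\mu^+$ for every cardinal $\mu\geq\aleph_2$, and for this I would first note that $\MQB_{\kappa^+}$ forces $\kappa=\aleph_2$. Indeed, $\MQB_1$ is a complete suborder of $\MQB_{\kappa^+}$ (a consequence of Lemma \ref{compl}), so any $\MQB_{\kappa^+}$-generic collapses every cardinal that $\MQB_1$ collapses; by Lemma \ref{levy} this means all cardinals in the interval $(\omega_1,\kappa)$ become countable. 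On the other hand, $\MQB_{\kappa^+}$ is $\sigma$-closed by Lemma \ref{countable closure}, hence preserves $\omega_1$, and has the $\kappa$-chain condition by Lemma \ref{chain condition theorem}, hence preserves $\kappa$ together with all cardinals (and cofinalities) $\geq\kappa$. Thus $\kappa=\aleph_2$ in the extension, and the cardinals $\geq\aleph_2$ of $V^{\MQB_{\kappa^+}}$ are exactly the cardinals $\geq\kappa$ of $V$.

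Granting this, $2^{\aleph_0}=\aleph_1$ is immediate from Lemma \ref{countable closure} (which yields $\CH$), and $2^{\aleph_1}=\aleph_2$ is Lemma \ref{preserving ch} combined with $\kappa=\aleph_2$. For the cardinals $\mu\geq\kappa$ I would run the usual nice-name counting argument. First, $|\MQB_{\kappa^+}|\leq\kappa^+$: a condition consists of a countable function from an ordinal below $\kappa^+$ into $H(\kappa^+)$ together with a countable set of edges whose components all lie in $H(\kappa^+)$, and the standing assumption $2^\kappa=\kappa^+$ gives $|H(\kappa^+)|=\kappa^+$, while the inaccessibility of $\kappa$ and the regularity of $\kappa^+$ give $(\kappa^+)^{\aleph_0}=\kappa^+$. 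Since every antichain of $\MQB_{\kappa^+}$ has size ${<}\kappa$ by Lemma \ref{chain condition theorem}, the number of antichains is at most $(\kappa^+)^{{<}\kappa}$, which equals $\kappa^+$ because $(\kappa^+)^\lambda\leq(2^\kappa)^\lambda=2^\kappa=\kappa^+$ for every $\lambda<\kappa$, using the ground-model $\GCH$ at $\kappa$. Hence, for any cardinal $\mu\geq\kappa$ of $V$, the number of nice $\MQB_{\kappa^+}$-names for subsets of $\mu$ is at most $(\kappa^+)^\mu$, which by the ground-model $\GCH$ above $\kappa$ equals $\kappa^+$ when $\mu=\kappa$ and equals $\mu^+$ when $\mu\geq\kappa^+$. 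Since every subset of $\mu$ in $V^{\MQB_{\kappa^+}}$ has a nice name, and since $\mu^+$ is computed the same in $V$ and in $V^{\MQB_{\kappa^+}}$ for $\mu\geq\kappa$, we get $\Vdash_{\MQB_{\kappa^+}}2^{\aleph_2}\leq\kappa^+=\aleph_3$ and $\Vdash_{\MQB_{\kappa^+}}2^\mu\leq\mu^+$ for all cardinals $\mu\geq\aleph_3$; the reverse inequalities being trivial, $\GCH$ holds at every cardinal in the extension.

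I do not expect any genuine obstacle in this argument: the content of the corollary lies entirely in Lemmas \ref{levy}, \ref{countable closure}, \ref{chain condition theorem}, and \ref{preserving ch}, and the only points requiring care are the elementary cardinal-arithmetic computations $(\kappa^+)^{{<}\kappa}=\kappa^+$, $(\kappa^+)^{\aleph_0}=\kappa^+$, and $(\kappa^+)^\mu=\mu^+$ for $\mu\geq\kappa^+$, all of which follow from the blanket hypothesis that $2^\mu=\mu^+$ for every $\mu\geq\kappa$ together with the inaccessibility of $\kappa$.
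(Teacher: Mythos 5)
Your proposal is correct and follows exactly the route the paper intends: CH from $\sigma$-closure together with the collapse (Lemma \ref{countable closure} and Lemma \ref{levy}), $2^{\aleph_1}=\aleph_2$ from Lemma \ref{preserving ch} plus the $\kappa$-c.c.\ (Lemma \ref{chain condition theorem}), and GCH at $\kappa$ and above by the standard nice-name count using $|\MQB_{\kappa^+}|\leq\kappa^+$, the $\kappa$-c.c., and the ground-model GCH above $\kappa$. The paper states the corollary without proof, and your write-up simply supplies these routine details; the cardinal-arithmetic computations you invoke are all correct.
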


Lemma \ref{satp}, which completes the proof of Theorem \ref{main theorem}, follows immediately from earlier lemmas, together with a standard density argument.

\begin{lemma}\label{satp} $\MQB_{\k^{+}}$ forces $\SATP_{\al_2}$.
\end{lemma}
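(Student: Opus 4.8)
The plan is to combine the preservation results already established with a genericity/density argument showing that, in the final model, every $\aleph_2$-Aronszajn tree is isomorphic to one of the trees $\lusim{T}_\a$ that the iteration explicitly specializes. First I would recall that by Lemma \ref{countable closure} the forcing $\MQB_{\k^+}$ is $\sigma$-closed, so $\o_1$ and $\CH$ are preserved; by Lemma \ref{chain condition theorem} it has the $\k$-c.c., so all cardinals $\geq\k$ are preserved and $\k$ remains a cardinal; and by Lemma \ref{levy} (applied through the first step $\MQB_1$, which is a complete suborder by Lemma \ref{limit}) we have $\k=\aleph_2$ in the extension. Thus in $V^{\MQB_{\k^+}}$ the relevant cardinal arithmetic is in place and it remains only to verify $\SATP_{\aleph_2}$, i.e.\ that $\aleph_2$-Aronszajn trees exist and are all special.

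Existence is immediate: since $\CH$ holds in the extension, Specker's theorem gives a special $\aleph_2$-Aronszajn tree. For the ``all such trees are special'' part, let $G$ be $\MQB_{\k^+}$-generic and let $T\in V[G]$ be an $\aleph_2$-Aronszajn tree. By the $\k$-c.c.\ and $\sigma$-closure, $T$ has a name $\lusim T\in H(\k^+)$ and, by a reflection argument, there is some $\a\in\mtcl X$ such that $\MQB_\a$ has the $\k$-c.c., $\lusim T$ is (equivalent to) a $\MQB_\a$-name for a $\k$-Aronszajn tree with universe $\k\times\o_1$ whose $\r$-th level is $\{\r\}\times\o_1$, and $\Phi(\a)$ is this name — this uses the key property of $\Phi$, namely that $\Phi^{-1}(x)$ is unbounded in $\mtcl X$ for every $x\in H(\k^+)$, so we may choose such an $\a$ above the stage where $\lusim T$ first becomes a name. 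Then by construction $\lusim T_\a=\Phi(\a)=\lusim T$. Now consider, for each $\n<\o_1$, the set $A^\a_\n$ of nodes appearing in $f_q(\a_0+\n)$ as $q$ ranges over $G$, where $\a_0=\a$; by clause (4) in the definition of condition, each $A^\a_\n$ is an antichain of $\lusim T_\a^G=T$. A straightforward density argument shows that every node of $T$ lies in some $A^\a_\n$: given a node $x=(\r,\z)$ and a condition $q$, one extends $q$ by adding $x$ to $f_q(\a_0+\n)$ for a suitable $\n<\o_1$ not yet used in a way that conflicts with $x$ — the crucial point being that there are $\o_1$-many available ``slots'' $\n$ and only countably many constraints in a given countable condition, so comparability obstructions (nodes already placed that are $\lusim T_\a$-comparable with $x$) can be avoided, and clauses (5), (6), (7) and the copying conditions can be met by further $\sigma$-closed extension (cf.\ the construction of adequate conditions in Lemma \ref{adequate}). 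Hence $T=\bigcup_{\n<\o_1}A^\a_\n$ is a union of $\o_1$-many antichains, i.e.\ $T$ is special.

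The main obstacle is the density claim that any given node can be inserted into some level of some $f_q(\a_0+\n)$ while preserving conditionhood: one must check that adding a node $x=(\r,\z)$ to $f_q(\a_0+\n)$ for an appropriate $\n$ does not violate clause (4) (no two placed nodes at stage $\a_0+\n$ are forced comparable) nor the symmetry/copying clauses (5)--(7). Avoiding clause (4) violations is handled by the pigeonhole observation above together with the fact that $q\restriction\m(\a_0+\n)=q\restriction\a$ can always be strengthened (below a fixed generic condition deciding enough of $\lusim T_\a$) to separate $x$ from the finitely many nodes already present — this is essentially the content of weak adequacy, Lemma \ref{adequate}. The copying clauses then force us to also add the $\Psi$-images of $x$ at twin stages; but since each condition mentions only countably many edges and, at stage $\a_0+\n$, the twin stages below various $\d_{N_0}\times\o_1$ are governed by Lemma \ref{str-props} and clause (7), these images can be accommodated by iterating the extension $\o$ times and taking a greatest lower bound via Lemma \ref{countable closure}. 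Once density is secured, the proof is complete, so I would present the lemma's proof as: cardinal preservation from earlier lemmas; existence of a special Aronszajn tree from $\CH$ and Specker; reduction of an arbitrary $T$ to some $\lusim T_\a$ via the bookkeeping property of $\Phi$ and reflection; and the density argument exhibiting $T$ as a union of $\o_1$ antichains.
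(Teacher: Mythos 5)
Your proposal is correct and follows essentially the same route as the paper: cardinal-arithmetic preservation from the earlier lemmas, existence of $\aleph_2$-Aronszajn trees from $\CH$ in the extension, catching $T$ as $\lusim{T}_\a=\Phi(\a)$ for a suitable $\a\in\mtcl X$ via the bookkeeping function, and a density argument showing that the generic antichains $A_\n=\bigcup\{f_q(\a+\n)\mid q\in G\}$ cover $T$. The only difference is that the paper's density step is simpler than your discussion suggests: one picks $\n<\o_1$ with $\a'+\n\notin\dom(f_q)$ for every $\a'\in\mtcl X$, so the new working parts are the singleton $\{x\}$ and one simply takes $q\oplus(f,\emptyset)$ to close under copying; no separation of $x$ from already-placed nodes (which in general could not be forced anyway), no appeal to clause (7) or Lemma \ref{str-props}, and no $\o$-step iteration is needed.
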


\begin{proof}
Let $G$ be $\MQB_{\k^{+}}$-generic over $V$. Since $\CH$ holds in $V[G]$, there are $\aleph_2$-Aronszajn trees there. Hence, it suffices to prove that, in $V[G]$, every $\al_2$-Aronszajn tree is special.

Let $T\in V[G]$ be an $\al_2$-Aronszajn tree. Note that $\aleph_2=\kappa$ in $V[G]$ by Lemmas \ref{levy} and \ref{chain condition theorem}. We need to prove that $T$ is special in $V[G]$. Let us go down to $V$ and let us note there that, by the $\k$-chain condition of $\MQB_{\k^+}$ together with the choice of $\Phi$, we may find some nonzero $\a\in\mtcl X$ such that $\Phi(\a)$ is a $\MQB_{\a}$-name for an $\aleph_2$-Aronszajn tree such that $\Phi(\a)_G=T$. We then have that $\lusim{T}_{\a}=\Phi(\a)$.

For every $\n<\o_1$, let $A_\n=\bigcup\{f_q(\a+\n)\,\mid\, q\in G\}$.  By the definition of the forcing, we have that $A_\n$ is an antichain of  $T$. Also, given any
condition $q\in\MQB_{\k^+}$ and any node $x \in \k\times\o_1$ such that $x\notin f_q(\alpha+\nu)$ for any $\nu<\o_1$, it is easy to see that we may extend $q$ to a condition $q^\ast$ such that $x\in f_{q^\ast}(\alpha+\nu)$ for some $\n<\o_1$; indeed, it suffices for this to pick any $\n<\o_1$ such that $\a'+\n\notin \dom(f_q)$ for any $\a'\in\mtcl X$, which of course is possible since $\dom(f_q)$ is countable, extend $f_q$ to a function $f$ such that $\a+\n\in \dom(f)$ and $f(\a+\n)=\{x\}$, and close under the relevant (restrictions of) functions $\Psi_{N_0, N_1}$ for edges $\langle (N_0, \g_0), (N_1, \g_1)\rangle\in\tau_q$.\footnote{In other words, we may take $q^*=q\oplus q'$, for $q'=(f, \emptyset)$, where $\dom(f)=\{\a+\n\}$ and $f_q(\a+\n)=\{x\}$.} The above density argument shows that every node in $T$ is in some $A_\n$.
It follows that $T$ is special in $V[G]$, which concludes the proof.
\end{proof}

\subsection*{Acknowledgements} The first author acknowledges support of EPSRC Grant EP/N032160/1. The second author's research has been supported by a grant from IPM (No. 97030417). Much of the work on this paper was done while the first author was visiting the Institute for Research in Fundamental Sciences (IPM), Tehran, in December 2017. He thanks the institute for their warm hospitality. We thank Assaf Rinot for a comprehensive list of historical remarks on $\GCH$ vs.\ $\SATP_{\al_2}$;  we have only included the remarks pertaining to the early history of the problem, leading to the Laver-Shelah result from \cite{laver-shelah}. We thank Desmond Lau and Rahman Mohammadpour for their careful reading of some parts of an earlier version of the paper and for
their useful comments. We also thank Tadatoshi Miyamoto and Boban Veli\v{c}kovi\'{c} for their comments on the use of predicates in forcing constructions related to the one in the paper.

\end{document}